\newcounter{sarrow}
\newtheorem{theorem}{Theorem}[section]
\newtheorem{proposition}[theorem]{Proposition}
\newtheorem{lemma}[theorem]{Lemma}
\newtheorem{corollary}[theorem]{Corollary}
\newtheorem{conjecture}[theorem]{Conjecture}
\newtheorem{question}[theorem]{Question}
\theoremstyle{definition}
\newtheorem{definition}[theorem]{Definition}
\newtheorem{remark}[theorem]{Remark}
\theoremstyle{remark}
\newcommand{\s}{\mathfrak{s}}
\newcommand{\TT}{\mathfrak{t}}
\newcommand{\im}{\mbox{Im}}
\newcommand{\spinc}{{\mbox{spin$^c$} }}
\newcommand{\zee}{\mathbb{Z}}
\newcommand{\arr}{\mathbb{R}}
\newcommand{\cue}{\mathbb{Q}}
\newcommand{\cee}{\mathbb{C}}
\newcommand{\F}{\mathbb{F}}
\newcommand{\eff}{\mathcal{F}}
\newcommand{\A}{{\mathcal A}}
\renewcommand{\S}{{\mathcal S}}
\renewcommand{\L}{{\mathcal L}}
\newcommand{\cc}{{\mbox{\bf c}}}
\newcommand{\ts}{\textstyle}
\newcommand{\hfhat}{\widehat{HF}}
\newcommand{\cfhat}{\widehat{CF}}
\newcommand{\cfkhat}{\widehat{CFK}}
\DeclareMathOperator{\tb}{tb}
\DeclareMathOperator{\rot}{rot}
\DeclareMathOperator{\tw}{tw}
\DeclareMathOperator{\Int}{Int}
\newcommand{\hmbar}{{\overline{HM}}}
\newcommand{\hmfrom}{\widehat{HM}}
\DeclareFontFamily{U}{mathx}{\hyphenchar\font45}
\DeclareFontShape{U}{mathx}{m}{n}{
      <5> <6> <7> <8> <9> <10>
      <10.95> <12> <14.4> <17.28> <20.74> <24.88>
      mathx10
      }{}
\DeclareSymbolFont{mathx}{U}{mathx}{m}{n}
\DeclareMathAccent{\widecheck}{0}{mathx}{"71}
\newcommand{\hmto}{\widecheck{HM}}
\theoremstyle{plain}
\begin{document}

\title{On contact type hypersurfaces in 4-space}

\author{Thomas E. Mark}

\author{B\"{u}lent Tosun}

\address{Department of Mathematics \\ University of Virginia \\ Charlottesville \\ VA}

\email{tmark@virginia.edu}

\address{Department of Mathematics\\ University of Alabama\\Tuscaloosa\\AL}

\email{btosun@ua.edu}

\subjclass[2010]{57K33, 57K43, 32E20}

\begin{abstract} We consider constraints on the topology of closed 3-manifolds that can arise as hypersurfaces of contact type in standard symplectic $\arr^4$. Using an obstruction derived from Heegaard Floer homology we prove that no Brieskorn homology sphere admits a contact type embedding in $\arr^4$, a result that has bearing on conjectures of Gompf and Koll\'ar. This implies in particular that no rationally convex domain in $\cee^2$ has boundary diffeomorphic to a Brieskorn sphere. We also give infinitely many examples of contact 3-manifolds that bound Stein domains in $\cee^2$ but not domains that are symplectically convex with respect to the standard symplectic structure; in particular we find Stein domains in $\cee^2$ that cannot be made Weinstein with respect to the ambient symplectic structure while preserving the contact structure on their boundaries. Finally, we observe that any strictly pseudoconvex, polynomially convex domain in $\cee^2$ having rational homology sphere boundary is diffeomorphic to the standard 4-ball.
\end{abstract}

\maketitle

\section{Introduction}

Let $(X,\omega)$ be a symplectic manifold. A smooth submanifold $Y\subset X$  is said to be a {\it hypersurface of contact type} if it is of codimension 1, and there exists a vector field $v$ defined in a neighborhood of $Y$ and transverse to $Y$, which is Liouville in the sense that $\L_v\omega = \omega$, where $\L_v$ denotes the Lie derivative. In this situation the 1-form $\alpha = \iota_v\omega|_Y$ is a contact form on $Y$, inducing a contact structure $\xi = \ker\alpha\subset TY$. 
Contact type hypersurfaces were introduced by Weinstein \cite{weinstein78}, who conjectured that the characteristic (Reeb) vector field on a compact contact type hypersurface must always admit a closed orbit. This conjecture was proved for hypersurfaces in standard $\arr^{2m}$ by Viterbo \cite{viterbo87}, but Weinstein's conjecture and generalized versions thereof sparked a long series of new ideas in symplectic and contact geometry that are still under exploration.

We are concerned with the question of {\it which smooth, closed, oriented manifolds can be realized}, up to diffeomorphism, as contact type hypersurfaces in $X = \arr^{2m}$ with the standard symplectic structure. When $m>2$, there are many diffeomorphism types of  hypersurfaces arising this way. Indeed, work of Cieliebak-Eliashberg \cite{CE-qconvex} implies that if $W\subset \arr^{2m}$ is a smooth, compact, codimension-0 submanifold that admits a defining Morse function having no critical points of index greater than $m$, then $W$ is isotopic to a Weinstein domain symplectically embedded in $(\arr^{2m},\omega_{std})$, and therefore in this case the boundary $\partial W$ is a hypersurface of contact type. In other words, when $W$ admits a smooth embedding in $\arr^{2m}$, and satisfies some basic topological constraints necessary to admit a Weinstein structure, it can be realized as such inside $\arr^{2m}$. When $m = 2$, which is our focus in this paper, the situation is rather different. The first examples of this difference are due to Nemirovski-Siegel \cite{NemSie}, whose results are discussed further below. Here we introduce an obstruction to contact type embeddings of 3-manifolds derived from Floer homology, which applies in much greater generality than Nemirovski-Siegel's argument (though curiously it does not apply to the examples in \cite{NemSie}). We use this obstruction to further demonstrate that, in contrast to the higher-dimensional situation, there are subtle obstructions to contact type embeddings in $\arr^4$, including cases of contact integer homology 3-spheres that can embed as the boundary of contractible Stein domains, but not as contact type hypersurfaces. In particular, while such Stein domains admit Weinstein structures, the Weinstein structure cannot be embedded symplectically in $(\arr^4,\omega_{std})$. 

The main result of this paper is the following, which rules out a large class of 3-dimensional homology spheres from arising as contact type hypersurfaces. 

\begin{theorem}\label{mainthm} Let $Y =\Sigma(a_1,\ldots, a_n)$ be a 3-dimensional Brieskorn integer homology sphere, oriented as the link of a Brieskorn complete intersection singularity. Then there is no orientation-preserving diffeomorphism between $Y$ and any hypersurface of contact type in $(\arr^4, \omega_{std})$.
\end{theorem}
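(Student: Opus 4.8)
The plan is to convert a hypothetical contact type embedding into a symplectic filling of $Y$ (or of $-Y$) by an integer homology ball, and then to obstruct the existence of such a filling using the Heegaard Floer homology of Brieskorn spheres.

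First I would record the ambient topology. A closed, connected, oriented hypersurface $Y\subset\arr^4$ separates $S^4=\arr^4\cup\{\infty\}$ into two compact regions $W$ and $W'$ glued along $Y$. Since $Y$ is an integer homology sphere, a Mayer--Vietoris computation for $S^4=W\cup_Y W'$ forces $H_*(W)=H_*(W')=H_*(\mathrm{pt})$; in particular the bounded region $W$ with $\partial W=Y$ is an integer homology $4$-ball, as is its complement. The contact type hypothesis supplies a Liouville field $v$ transverse to $Y$, which points either out of or into $W$. Next I would manufacture a genuine symplectic filling by compactifying: realize $(\arr^4,\omega_{std})$ as the complement of a symplectic line $L\cong\cee P^1$ with $L\cdot L=+1$ in $(\cee P^2,\omega_{FS})$. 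If $v$ points outward along $\partial W$, then $W$ is a strong (indeed exact) symplectic filling of $(Y,\xi)$ that is a homology ball, while $\cee P^2\setminus\mathrm{int}(W)$ is a symplectic cap with $b_2^+=1$ containing $L$. If $v$ points inward, the same compactification exhibits the closure of the unbounded region as a homology-ball filling of $(-Y,\xi)$, again capped by the $b_2^+=1$ piece. In either case the problem reduces to showing that a Brieskorn sphere, in one of its two orientations, cannot bound a homology-ball strong filling whose complement in $\cee P^2$ is the standard positive cap.

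Finally I would run the Floer-theoretic obstruction. Because the filling $W$ has trivial intersection form, hence is in particular negative definite with $b_1=0$, the \OS $d$-invariant inequality $c_1(\s)^2+b_2^-(W)\le 4\,d(Y,\s|_Y)$ degenerates to $d(Y)\ge 0$, and applying it to the complementary homology ball bounding $-Y$ gives $d(Y)\le 0$; hence $d(Y)=0$. This already eliminates every Brieskorn sphere with nonzero correction term, but is purely smooth and says nothing about the sharper cases. For $d(Y)=0$ I would use the symplectic structure directly: a strong filling forces the \OS contact invariant $c^+(\xi)\in\HFplus(-Y)$ to be nonzero, with absolute grading pinned down by the homology-ball filling (trivial $c_1$ and the signature/Euler characteristic of a ball). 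Invoking \OnS's explicit computation of $\HFplus$ for Brieskorn (Seifert fibered) homology spheres, I would show that the nontrivial reduced group $\HFred(Y)$ leaves no nonzero class in the grading the contact element is obliged to occupy; equivalently, the cobordism map on $\HFplus$ induced by a homology-ball filling capped by the positive $\cee P^2$-piece cannot be nontrivial in the way the nonvanishing of the closed invariant of $\cee P^2$ demands.

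The main obstacle is precisely this last step. The $d$-invariant is insufficient on its own, so the argument must exploit the finer structure of $\HFplus$—the reduced homology together with the absolute $\Q$-gradings—and the nonvanishing of the contact (or closed $b_2^+=1$) invariant coming from the cap. The delicate point is that the induced contact structure $\xi$ on $Y$ is not under our control, so I must control the grading of $c^+(\xi)$ uniformly and match it against the Ozsv\'ath--Szab\'o computation for \emph{all} Brieskorn spheres and in \emph{both} orientations simultaneously; this uniform incompatibility is where the real work lies.
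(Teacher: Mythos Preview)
Your setup contains a minor confusion: for a rational homology sphere embedded as a contact type hypersurface in $\arr^4$, the Liouville field is automatically directed out of the bounded region (this is the Stokes' theorem argument the paper cites). So the ``inward'' case never occurs, and only the standard orientation of $\Sigma(a_1,\ldots,a_n)$ is at stake---which is also why the paper's statement is one-sided.

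The real gap is in your final step. You correctly arrive at the obstruction that $c^+(\xi)$ must lie in the image of $HF^\infty(-Y)\to HF^+(-Y)$, equivalently $c^+(\xi)=\Theta^+$; this is the content of the paper's Corollary~\ref{reducedcor}. But your plan to violate this using only the grading of $c^+(\xi)$ and the Ozsv\'ath--Szab\'o computation of $HF^+$ for Brieskorn spheres cannot succeed. When $Y$ bounds an integer homology ball one has $d(Y)=0$, so $\Theta^+\in HF^+(-Y)$ sits in exactly the grading forced on $c^+(\xi)$ by the homology-ball filling. Grading information alone therefore cannot separate $c^+(\xi)$ from $\Theta^+$; indeed for $\Sigma(2,3,5)$ the contact invariant of the unique tight structure \emph{is} $\Theta^+$, so no argument that ignores the hypothesis ``$Y$ bounds a homology ball'' and relies only on the grading and the module structure of $HF^+(-Y)$ can work uniformly. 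The ``uniform incompatibility'' you flag as the delicate point is not just delicate---it is false as stated.

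The paper resolves this by abandoning the attempt to read off $c^+(\xi)\neq\Theta^+$ directly from $HF^+$, and instead detects the inequality via \emph{knot} Floer homology. It compares two $\tau$-invariants of the regular Seifert fiber $K$: a smooth one $\tau_{sm}(Y,K)$, bounded above using a sharp \spinc structure on the negative-definite plumbing (this is where diagonalizability of the plumbing lattice, hence the homology-ball hypothesis, enters), and a contact one $\tau_\xi(Y,K)$, bounded below via Hedden's Bennequin-type inequality and a new lower bound $\tw(\xi)>-\sqrt{a_1\cdots a_n}$ on the twisting number of \emph{any} tight contact structure, proved by convex surface theory on the Seifert pieces. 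The resulting strict inequality $\tau_\xi>\tau_{sm}$ forces $c^+(\xi)\neq\Theta^+$. None of this machinery---the knot filtration, the plumbing estimate, or the contact-topological twisting bound---is present or replaceable in your outline.
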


Recall that a 3-dimensional Brieskorn sphere $Y = \Sigma(a_1,\ldots, a_n)$ is the link of a certain complex surface singularity, and the diffeomorphism type of $Y$ is determined uniquely by a collection of integers $(a_1,\ldots, a_n)$, where $n\geq 3$ and $a_j\geq 2$ (see Section \ref{estimatesec}, or \cite{savelievbook} for example). The manifold $\Sigma(a_1,\ldots, a_n)$ is an integer homology sphere if and only if $a_1,\ldots, a_n$ are pairwise relatively prime \cite[Theorem 4.1]{neumannraymond}. We will use the term ``integer homology sphere'' or just ``homology sphere'' to mean a 3-manifold with the integral homology of $S^3$, and ``rational homology sphere'' for the same condition with rational coefficients.

To put Theorem \ref{mainthm} in a broader context, consider the following sequence of increasingly strong conditions on a closed, oriented 3-manifold $Y$ (an integer homology sphere) concerning 4-manifolds that may have $Y$ as their boundary.
\begin{itemize}
\item[B1.] There is a smooth 4-manifold $W$ having $\widetilde{H}_*(W;\zee) = 0$, with $\partial W \cong Y$.
\item[B2.] There is a smooth embedding $Y \to \arr^4$.
\item[B3.] There is an embedding of $Y$ into $(\arr^4, \omega_{std})$ as a hypersurface of contact type.
\end{itemize}
Certainly B3$\implies$B2$\implies$B1, where for the second implication one can simply take $W$ to be the closure of the bounded component of $\arr^4 - Y$.

The question of which 3-manifolds satisfy B1 is of great interest in smooth low-dimensional topology, and is far from settled even among Brieskorn spheres: many Brieskorn spheres satisfy B1 (and also B2), many do not, and for many the answer is unknown. For example, several infinite families of Brieskorn homology spheres arise as the boundaries of smooth, contractible 4-manifolds that admit Morse functions with a single critical point of each index $0, 1$ and $2$, as constructed by Casson and Harer \cite{CH}. Any 4-manifold with these properties admits a smooth embedding in $\arr^4$ (see \cite{mazur}, or \cite[Example 3.2]{gompfemb}). Thus such Brieskorn spheres satisfy both B1 and B2. On the other hand, a great many Brieskorn spheres are known {\it not} to satisfy B1 or B2 \cite{FS:seifert,furuta90}; the general classification of Brieskorn spheres satisfying these properties is unknown. Under the additional geometric condition of a contact type embedding, however, we find that the problem becomes tractable and Theorem \ref{mainthm} gives a uniform answer. Moreover, in light of Casson-Harer's constructions and the higher-dimensional results of Cieliebak-Eliashberg mentioned above, the theorem  illustrates the contrast between dimension 4 and higher dimensions.

The preceding conditions concern only the smooth and symplectic features of $\arr^4 = \cee^2$, but one can also study embedding questions from the point of view of complex geometry. Of particular interest are Stein domains, by which we mean compact domains\footnote{Following \cite{CE-qconvex}, by a {\it compact domain} we mean a compact set that is the closure of a connected open set.} $W\subset \cee^2$ described as a sub-level set $\{\phi\leq c\}$, where $\phi$ is a proper, strictly plurisubharmonic function defined on a neighborhood of $W$ with regular value $c$. By definition, a choice of such $\phi$ determines a K\"ahler form $\omega_\phi = -dd^c\phi$ on $W$, and the gradient of $\phi$ is a Liouville field for $\omega_\phi$ inducing a contact structure on the boundary $Y$. Work of Gompf \cite{gompfemb} provides many examples of contact 3-manifolds that embed in $\cee^2$ as the boundaries of Stein domains, including many homology spheres. 

However, there is no need for $\omega_\phi$ to agree with the standard symplectic form on $\arr^4$, and in particular the boundary of a Stein domain need {\it not} be a contact type hypersurface in $(\arr^4, \omega_{std})$. To put this another way, in general the form $\omega_\phi$ need not extend to a symplectic form on $\arr^4$: this extension property characterizes a {\it rationally convex} domain $W$. Recall that a compact domain $W\subset \cee^2$ is rationally convex if for every point $x\in \cee^2 - W$, there exists a rational function $r$ such that $|r(x)|> \sup \{|r(w)|, \, w\in W\}$. By a result of Duval and Sibony \cite{DS}, (see also Nemirovski \cite{Nem}), a Stein domain $W$ is rationally convex if and only if it admits a strictly plurisubharmonic function $\phi$ such that $\omega_\phi$ extends to a K\"ahler form on all of $\cee^2$. In this case, one may additionally assume that the extended form is standard outside of a compact subset (cf. \cite[Lemma 3.4]{CE-qconvex}). By a classical result of Gromov \cite{gromov85}, a symplectic form on $\arr^4$ that is standard at infinity in this sense is symplectomorphic to the standard one. Thus from this point of view, one can think of rational convexity (loosely) as the intersection between the Stein condition and the condition of contact type boundary in $(\arr^4, \omega_{std})$, up to ambient symplectomorphism. We note that our terminology departs slightly from standard usage in several complex variables. Our ``Stein domains'' correspond to the closures of strictly pseudoconvex domains, while for us a ``rationally convex domain'' means the closure of a strictly pseudoconvex domain, which is also rationally convex.

With these considerations in mind we can add two additional conditions to our previous list:
\begin{itemize}
\item[B3'.] There is an embedding of $Y$ into $\cee^2$ as the boundary of a Stein domain.
\item[B4.] There is an embedding of $Y$ into $\cee^2$ as the boundary of a rationally convex domain.
\end{itemize}
Thus we have the chain of implications:
\begin{equation}\label{implications}
\begin{tikzcd}
{}& 
\underset{\mbox{\scriptsize (cont. type)}}{\mbox{B3}} \arrow[dr,Rightarrow] &{} & {}\\
 \underset{\mbox{\scriptsize (rat. conv.)}}{\mbox{B4}}\arrow[ur,Rightarrow]\arrow[dr,Rightarrow]&{}& \underset{\mbox{\scriptsize (smooth emb.)}}{\mbox{B2}}\arrow[r,Rightarrow,"Y=\zee HS^3"]& \underset{\mbox{\scriptsize (homol. ball)}}{\mbox{B1}}\\
{} &  \underset{\mbox{\scriptsize (Stein)}}{\mbox{B3'}} \arrow[ur,Rightarrow] & {} & {} 
\end{tikzcd}
\end{equation}
An obvious consequence of Theorem \ref{mainthm} is therefore:

\begin{corollary} No Brieskorn homology sphere is orientation-preserving diffeomorphic to the boundary of a rationally convex domain in $\cee^2$.
\end{corollary}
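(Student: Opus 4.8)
The plan is to deduce the Corollary directly from Theorem \ref{mainthm} by establishing the implication B4 $\Rightarrow$ B3 indicated in the diagram \eqref{implications}; all of the analytic content is supplied by the results of Duval-Sibony and Gromov recalled above. I would argue by contradiction.

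Suppose some Brieskorn homology sphere $Y = \Sigma(a_1,\ldots,a_n)$, oriented as the link of the singularity, admits an orientation-preserving diffeomorphism to the boundary of a rationally convex domain $W \subset \cee^2$. By the Duval-Sibony criterion \cite{DS}, rational convexity of the Stein domain $W$ means one may choose a strictly plurisubharmonic function $\phi$ on a neighborhood of $W$ so that the K\"ahler form $\omega_\phi = -dd^c\phi$ extends to a K\"ahler form $\omega$ on all of $\cee^2$; moreover, by \cite[Lemma 3.4]{CE-qconvex} this extension can be arranged to agree with $\omega_{std}$ outside a compact set.

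Next I would invoke Gromov's theorem \cite{gromov85}: because $\omega$ is standard at infinity, there is a symplectomorphism $\Phi \colon (\cee^2,\omega) \to (\arr^4,\omega_{std})$. On $W$ we have $\omega = \omega_\phi$, and the gradient vector field $\nabla\phi$ (with respect to the K\"ahler metric) is a Liouville field for $\omega_\phi$ that is transverse to $\partial W$, so $\partial W$ is a hypersurface of contact type in $(\cee^2,\omega_\phi)$. Pushing forward by $\Phi$, the image $\Phi(\partial W)$ is then a hypersurface of contact type in $(\arr^4,\omega_{std})$.

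Finally, I would observe that $\Phi(\partial W)$ is orientation-preserving diffeomorphic to $\partial W$, and hence to $Y$. This exhibits $Y$ as a contact type hypersurface in $(\arr^4,\omega_{std})$, contradicting Theorem \ref{mainthm}, and the Corollary follows. The only point requiring care is the bookkeeping of orientations---ensuring that the Liouville orientation of $\partial W$ induced by $\nabla\phi$ agrees with its boundary orientation and is preserved under $\Phi$---but this presents no genuine obstacle, since essentially all of the difficulty has been absorbed into Theorem \ref{mainthm} and the cited extension and normalization results.
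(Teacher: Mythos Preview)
Your argument is correct and follows exactly the route the paper intends: the Corollary is stated as an ``obvious consequence of Theorem \ref{mainthm}'' via the implication B4 $\Rightarrow$ B3, which the paper justifies in the preceding paragraph using precisely the Duval--Sibony criterion, the normalization from \cite[Lemma 3.4]{CE-qconvex}, and Gromov's theorem that you invoke. Your orientation remark is also in line with the paper's later observation that for rational homology spheres the Liouville and boundary orientations automatically agree.
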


On the other hand, our results leave open the following tantalizing conjecture of Gompf \cite[Conjecture 3.3]{gompfemb}:

\begin{conjecture}[Gompf]\label{gompfconj}  No nontrivial Brieskorn sphere, with either orientation, arises as the boundary of a Stein domain in $\cee^2$.
\end{conjecture}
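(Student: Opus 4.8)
The final statement is an open conjecture, so what follows is a plan of attack rather than a proof. Since by the implications in \eqref{implications} a Stein domain need not be of contact type, the obstruction behind Theorem \ref{mainthm} does not transfer directly, and the plan is to obstruct condition B3' on its own terms. Suppose $W \subset \cee^2$ were a Stein domain with $\partial W$ orientation-preserving diffeomorphic to $\pm Y$ for a nontrivial Brieskorn sphere $Y = \Sigma(a_1,\ldots,a_n)$. Two facts are immediate. First, because $Y$ is an integer homology sphere smoothly embedded in $\arr^4$, Alexander duality forces $W$ to be a homology $4$-ball; removing a ball exhibits a homology cobordism from $\pm Y$ to $S^3$, so $d(\pm Y)=0$. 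Second, $W$ is a Stein filling, so the induced contact structure $\xi$ on $\partial W$ is tight with nonvanishing Ozsv\'ath--Szab\'o contact invariant $c(\xi)\in\hfhat(\mp Y)$. The goal is to prove these conditions mutually incompatible for every nontrivial Brieskorn sphere and both orientations.

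For the link orientation $+Y$ I would exploit that $W$ is not merely \emph{a} Stein filling but a Stein \emph{homology-ball} filling, a far more rigid object than the Milnor fiber or minimal resolution that fill the canonical Milnor-fillable contact structure. Using the plumbing calculus, which renders $\HFp(\pm Y)$ and all correction terms explicit for Brieskorn spheres, I would try to combine the graded structure of $\HFp$ with the adjunction-type and contact-invariant constraints imposed by a homology-ball Stein filling, aiming for a contradiction valid no matter which tight $\xi$ the filling induces. For the reversed orientation $-Y$ the scheme is the same with $\HFp$ and $\HFm$ interchanged and $d(-Y)=-d(Y)$; here the input is often cleaner, since in favorable cases no fillable---indeed no tight---contact structure exists at all (for instance $-\Sigma(2,3,5)$, by Etnyre--Honda), so that $c(\xi)$ cannot even be defined, and I would leverage classification results for tight contact structures on Seifert fibered spaces to rule out fillings outright.

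The principal obstacle is exactly the ingredient that powers Theorem \ref{mainthm} and has no counterpart here. A contact type hypersurface in $(\arr^4,\omega_{std})$ inherits a symplectic cap: its unbounded complement is a symplectic cobordism to the standard contact $S^3$, which one closes off with a ball to obtain a closed symplectic $4$-manifold, whose Seiberg--Witten/Heegaard Floer invariants drive the obstruction. A Stein domain carries only the K\"ahler form $\omega_\phi$, which by the Duval--Sibony criterion need \emph{not} extend over $\cee^2$; there is thus no cap and no closed manifold to analyze, and the obstruction must be squeezed from the open-ended filling alone. The difficulty is sharp because the purely smooth constraints are genuinely met by infinitely many Brieskorn spheres---the Casson--Harer families bound contractible $4$-manifolds smoothly embedded in $\arr^4$, so $d=0$ and the homology-ball condition both hold---so any successful argument must detect an incompatibility of the \emph{pseudoconvex} structure with the singularity link that is invisible both smoothly and to Floer homology under the homology-ball hypothesis alone.

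I therefore expect that closing the gap requires a genuinely new ingredient: either a Floer-theoretic filling invariant sensitive to Stein homology-ball fillability beyond the correction terms, or a complex-analytic argument using holomorphic function theory on $W\subset\cee^2$ against the rigidity of the Brieskorn singularity. Until such an ingredient is found, the argument stalls precisely where the cap is unavailable, which is why the conjecture remains open.
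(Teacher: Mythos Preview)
Your assessment is correct: the statement is an open conjecture, and the paper does not prove it. The paper explicitly says that its ``results leave open the following tantalizing conjecture of Gompf,'' and the only partial evidence it cites beyond what you mention is the result of \cite{MT:pseudoconvex} that one specific contractible domain in $\cee^2$ with boundary $\Sigma(2,3,13)$ is not diffeomorphic to a Stein domain (which, as the paper notes, still does not rule out B3' for $\Sigma(2,3,13)$).

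Your diagnosis of why the paper's method fails to reach the conjecture matches the paper's own framing. The obstruction of Theorem~\ref{obstrthm}/Corollary~\ref{reducedcor} requires a \emph{strong} symplectic cobordism to $(S^3,\xi_0)$, and as the paper stresses in the Remark after Corollary~\ref{reducedcor}, a Stein domain in $\cee^2$ yields only a \emph{weak} cobordism (strong at $S^3$) via $B^4_R-\Int(W)$, for which Echeverria's naturality result does not apply and in general cannot be expected to. Your identification of the missing cap, and of the Casson--Harer families as witnesses that the purely smooth and $d$-invariant constraints are already satisfied, is exactly the point: the paper's argument genuinely uses the symplectic convexity and not merely fillability by a homology ball. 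There is nothing to correct here; your write-up is an accurate account of where the problem stands and why.
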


 While Conjecture \ref{gompfconj} concerns Brieskorn spheres with either orientation, Theorem \ref{mainthm} applies only to the standard orientation. Note that a symplectic manifold is canonically oriented, and if $Y$ is a contact type hypersurface then a choice of Liouville vector field $v$  induces an orientation on $Y$. On the other hand, if $Y$ is a closed, compact hypersurface in $\arr^{2m}$, then $Y$ is the boundary of a compact domain $W\subset \arr^{2m}$ and hence inherits an orientation from $W$. We will generally assume that these orientations agree, which is the same as saying that the Liouville field near $Y = \partial W$ is directed out of $W$, and always holds if $H^1(Y;\arr) = 0$ (this follows from a simple argument using Stokes' theorem, similar to \cite[Lemma 2.1]{weimin}). A symplectic manifold with an outward-pointing Liouville vector field near its boundary is called {\it symplectically convex}; with this terminology a hypersurface of contact type in $(\arr^{2m}, \omega_{std})$ having vanishing first Betti number is the same as the boundary of a symplectically convex domain.

Besides Brieskorn spheres that do not satisfy B2, or those for which the classification of contact structures has been obtained and shows that no fillable structure has the correct homotopy class to be filled by a homology ball (such as the family $-\Sigma(2,3,6n+1)$ found in \cite{MT:pseudoconvex}), the only direct evidence for Conjecture \ref{gompfconj} was given in \cite{MT:pseudoconvex}. There it was shown that a certain contractible domain in $\cee^2$, having boundary the Brieskorn manifold $\Sigma(2,3,13)$, is not diffeomorphic to a Stein domain. Yet this does not show $\Sigma(2,3,13)$ does not satisfy B3', since it is still conceivable that a different embedding of $\Sigma(2,3,13)$ in $\cee^2$ bounds a Stein domain not diffeomorphic to the one considered in \cite{MT:pseudoconvex}.

Remarkably, the only Brieskorn spheres $\Sigma(a_1,\ldots, a_n)$ known to satisfy B1 have $n = 3$. In fact, the following is a longstanding conjecture appearing in a paper of Koll\'ar, who explains its relationship to the Montgomery-Yang problem concerning the classification of circle actions on $S^5$, as well as the Bogomolov-Miyaoka-Yau inequality.

\begin{conjecture}(see \cite[Conjecture 20]{kollar}, also \cite{FS87}) \label{kollarconj} No Brieskorn sphere $\Sigma(a_1,\ldots, a_n)$ with $n\geq 4$ is the boundary of a smooth integer homology ball.
\end{conjecture}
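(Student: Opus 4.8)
Since this is a longstanding open conjecture rather than a result proved in the excerpt, I outline the research program I would pursue. The first move is to recast the statement homologically: an integer homology sphere $Y$ bounds a smooth integer homology ball precisely when $[Y] = 0$ in the integer homology cobordism group $\Theta^3_{\zee}$. It therefore suffices to exhibit, for each $Y = \Sigma(a_1,\ldots,a_n)$ with $n \geq 4$, a homology cobordism invariant that does not vanish on $Y$ (for an additive invariant, nonvanishing forces $[Y]\neq 0$). This reformulation is what opens the problem to gauge theory and Floer homology, and it also frames the expected dichotomy between $n=3$ -- where the Casson--Harer spheres \cite{CH} do bound homology balls -- and $n\geq 4$: any successful invariant must be sensitive to the presence of four or more exceptional fibers.

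Second, I would extract concrete constraints from the canonical negative-definite plumbing. Each $Y = \Sigma(a_1,\ldots,a_n)$ bounds a star-shaped plumbed 4-manifold $P$, negative definite with unimodular intersection form $Q_P$. If $Y = \partial W$ for an integer homology ball $W$, then $X = P \cup_Y (-W)$ is a closed, negative-definite 4-manifold whose form is isomorphic to $Q_P$, so Donaldson's diagonalization theorem forces $Q_P \cong \langle -1\rangle^{N}$. This already disposes of the cases where $Q_P$ is non-diagonalizable (as for the $-E_8$ lattice of $\Sigma(2,3,5)$). When $Q_P$ happens to be diagonalizable one turns to the \OS correction term, which satisfies $d(Y)=0$ whenever $[Y]=0\in\Theta^3_{\zee}$ and is computable from $P$ via lattice cohomology and graded roots (N\'emethi). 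Since $d$ alone is known not to obstruct every $n\geq 4$ sphere, one is driven to the finer homology cobordism invariants: the involutive correction terms $\underline{d},\bar{d}$ of Hendricks--Manolescu, the $\mathrm{Pin}(2)$-equivariant invariants $\alpha,\beta,\gamma$ of Manolescu, and the Fr\o yshov instanton invariant $h$, all of which are explicitly computable for Seifert spaces.

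Third, the crux becomes a uniform nonvanishing statement. For Seifert fibered spaces one has the Neumann--Siebenmann invariant $\bar\mu(Y)$, defined combinatorially from the plumbing, conjecturally a homology cobordism invariant (Neumann) and known to agree with instanton- and Heegaard-Floer-theoretic invariants in many cases (work of Saveliev, and Fintushel--Stern \cite{FS:seifert}). I would try to establish two things in tandem: (i) that $\bar\mu$ -- or the Floer invariant it computes -- is a genuine obstruction to bounding a homology ball, and (ii) that $\bar\mu(\Sigma(a_1,\ldots,a_n)) \neq 0$ for every $n \geq 4$, by a direct number-theoretic analysis of the Seifert data exploiting the degree $\geq 4$ of the central vertex.

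The main obstacle is precisely the tension between (i) and (ii). No single invariant in the current arsenal is simultaneously known to be a homology cobordism invariant \emph{and} provably nonzero on the entire family $\{\Sigma(a_1,\ldots,a_n): n\geq 4\}$; each known tool disposes of infinite subfamilies (as in \cite{FS:seifert,furuta90}) yet leaves infinitely many cases untouched, which is exactly why the conjecture is open. The hardest part is to convert a combinatorial nonvanishing of $\bar\mu$, or the growth of the instanton Floer homology with the number of flat connections, into a statement that is uniform in $n$ and survives homology cobordism. Proving Neumann's invariance conjecture for $\bar\mu$, or Saveliev's conjectural identification of $\bar\mu$ with the Fr\o yshov invariant, would be the decisive step; the remaining lattice combinatorics, though intricate, should then follow from the structure of star-shaped graphs with at least four legs.
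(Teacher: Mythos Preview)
The statement you were asked to address is Conjecture~\ref{kollarconj}, and the paper does \emph{not} prove it; it is presented explicitly as an open problem. There is therefore no ``paper's own proof'' to compare against. You recognized this correctly and offered a research outline rather than a proof, which is the appropriate response.

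Your outline is reasonable and hits the standard machinery (homology cobordism group, Donaldson diagonalization, $d$-invariants, involutive and $\mathrm{Pin}(2)$-equivariant refinements, Neumann--Siebenmann $\bar\mu$), and you are honest about the central obstruction: no known invariant is simultaneously a homology cobordism invariant and provably nonzero on all $\Sigma(a_1,\ldots,a_n)$ with $n\geq 4$. That diagnosis is accurate and is precisely why the conjecture remains open.

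For context, what the paper actually proves is a symplectic weakening of the conjecture, and in fact a stronger statement than the $n\geq 4$ restriction: Theorem~\ref{mainthm} shows that \emph{no} Brieskorn homology sphere (any $n\geq 3$, standard orientation) admits a contact-type embedding in $(\arr^4,\omega_{std})$. The method is entirely different from your outline: rather than attacking homology cobordism invariants directly, the paper exploits the Heegaard Floer contact invariant. If $(Y,\xi)$ bounds a symplectically convex domain, then $c^+(\xi)$ must vanish in $HF^+_{red}(-Y)$ (Corollary~\ref{reducedcor}); this forces an inequality $\tau_\xi(Y,K)\leq \tau_{sm}(Y,K)$ for every knot $K$ (Lemma~\ref{taulemma}). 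Taking $K$ a regular Seifert fiber, the paper bounds $\tau_{sm}$ from above using Raoux's genus bound and the diagonalizability of the plumbing lattice, and bounds $\tau_\xi$ from below via a new estimate on twisting numbers (Theorem~\ref{twistthm}). The two bounds are incompatible, ruling out the contact-type embedding. This settles the symplectic analog uniformly, but says nothing about smooth homology balls in general, so Conjecture~\ref{kollarconj} remains untouched.
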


One could consider a weaker version of this conjecture, in which the condition that $\Sigma(a_1,\ldots, a_n)$ bounds a homology ball is replaced by the condition that it embeds in $\arr^4$. Our results then confirm a symplectic version of this weaker conjecture, ruling out an embedding as a contact type hypersurface (and this version holds for the case $n= 3$ as well). 

It is natural to ask whether any of the implications in the diagram \eqref{implications} can be reversed, or whether any hold between B3 and B3'. In this direction, recall that in \cite{NemSie}, Nemirovski and Siegel classified those disk bundles over compact surfaces that admit rationally convex embeddings in $\cee^2$. In particular they found two cases (certain disk bundles over $\arr P^2$ and the Klein bottle) that embed in $\cee^2$ as Stein domains, but not as rationally convex domains. In fact, the arguments of \cite{NemSie} imply that if $M$ is the boundary of the disk bundle over $\arr P^2$ having Euler number $-2$, then $M$ satisfies B3', but not B3. Each of the conditions B3, B3', B4 can be considered with respect to a particular contact structure on $Y$, and if the contact structure is fixed, it is not difficult to give many more examples of manifolds satisfying B3' but not B3. We give an infinite family of such examples in Section \ref{examplesec} (Nemirovski-Siegel's argument is also specific to a particular contact structure, but in their case there is only one relevant contact structure to consider). Even with a fixed contact structure, it seems an interesting and delicate question whether there exist 3-manifolds satisfying B3 but not B3'; see Section \ref{hypexsec} for additional remarks.

Returning to symplectic topology, Weimin Chen conjectured in \cite[Conjecture 5.1]{weimin} that the only rational homology sphere arising as the boundary of a rationally convex domain in $\cee^2$ is the 3-sphere. In this direction, faced with the current lack of examples, we can ask:

\begin{question}\label{ctquestion} Does any rational homology 3-sphere, other than $S^3$, admit an embedding in $(\arr^4, \omega_{std})$ as a contact type hypersurface?
\end{question}

Chen was motivated by questions in smooth 4-dimensional topology, particularly whether there exist smooth 4-manifolds homeomorphic but not diffeomorphic to the projective plane $\cee P^2$. He shows that such an exotic 4-manifold would result if one were able to find a suitably ``small'' concave filling or ``cap'' of the circle bundle $M_{NS}$ over $\arr P^2$ considered by Nemirovski-Siegel---essentially by attaching such a cap to the Stein domain bounded by $M_{NS}$. The proof that the result would not be diffeomorphic to $\cee P^2$ relies on the fact that $M_{NS}$ cannot be realized as a hypersurface of contact type in $\cee P^2$, which is a generalization of Nemirovski-Siegel's result that $M_{NS}$ does not bound a rationally convex domain in $\cee^2$ (with nearly the same proof). While construction of small concave caps remains an open problem, and our techniques apply to domains in $\cee^2$ rather than $\cee P^2$, one may hope that the examples of Stein domains not having contact type boundary given here may allow some additional flexibility in this effort.

\begin{remark} As a partial response to Chen's conjecture concerning rationally convex domains with rational homology sphere boundary, we point out the following consequence of results in the literature:

\begin{theorem}\label{polyconvthm} If $W\subset \cee^2$ is a polynomially convex domain whose boundary is a rational homology sphere, then $W$ is diffeomorphic to $B^4$ and in particular $\partial W \cong S^3$.
\end{theorem}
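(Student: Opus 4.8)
The plan is to reduce the statement to a purely topological assertion using the extra strength of polynomial (over rational) convexity, and then invoke symplectic rigidity. Write $Y=\partial W$.

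\textbf{Step 1 (homological reduction).} First I would exploit the fact that, at the level of cohomology, polynomial convexity is far stronger than rational convexity. A classical consequence of the approximation theory of polynomial hulls is that a polynomially convex compact set $K\subset\cee^n$ satisfies $\check H^q(K;\zee)=0$ for all $q\ge n$; since $W$ is a smooth compact manifold with boundary, hence an ANR, \v{C}ech and singular cohomology agree and we obtain $H^2(W;\zee)=0$. Because $W$ is a Stein domain of complex dimension $2$ it admits a handle decomposition with handles of index $\le 2$, so $H_3(W)=H_4(W)=0$ and $H_2(W)$ is free; the universal coefficient theorem then forces $H_2(W)=0$ together with the vanishing of the torsion of $H_1(W)$, so $H_1(W)$ is free. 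Feeding this into the long exact sequence of the pair $(W,Y)$, together with Lefschetz duality $H_k(W,Y)\cong H^{4-k}(W)$ (which yields $H_2(W,Y)=H_1(W,Y)=0$) and the hypothesis that $Y$ is a rational homology sphere (so $H_1(Y;\zee)$ is finite and $H_2(Y;\zee)=0$), one reads off $H_1(Y)\cong H_1(W)$. A free group isomorphic to a finite group is trivial, so $H_1(W)=H_1(Y)=0$. Thus $W$ is an integral homology ball and $Y$ is an integer homology sphere.

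\textbf{Step 2 (symplectic realization).} Since $W$ is polynomially convex it is rationally convex, so by Duval--Sibony \cite{DS} (see also \cite{Nem}) and Gromov \cite{gromov85} the induced contact structure $\xi$ on $Y$ is realized as a contact-type hypersurface in $(\cee^2,\omega_{std})$, with $W$ appearing as a symplectically convex (Liouville) filling and the ambient form standard at infinity. Compactifying $\cee^2\subset\cee P^2$, the line at infinity becomes a symplectically embedded sphere $C$ of self-intersection $+1$ inside a closed symplectic manifold that is $(\cee P^2,\omega_{FS})$ up to scale. Writing the cap $Z=\cee P^2\setminus\mathrm{int}(W)$, a Mayer--Vietoris computation using Step 1 shows that $Z$ has the homology of $S^2$, generated by $[C]$.

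\textbf{Step 3 (rigidity), and the main obstacle.} Finally I would apply McDuff's classification of rational symplectic $4$-manifolds: the $+1$-sphere $C$ has a standard neighborhood $N$ (the Euler-number-$1$ disk bundle, with $\partial N=S^3$), and $\cee P^2\setminus\mathrm{int}(N)$ is the standard ball $B^4$. This already places $W$ inside a standard $B^4$, the difference $Z\setminus\mathrm{int}(N)$ being a homology cobordism from $Y$ to $S^3$. The main obstacle is precisely to upgrade this to an identification $W\cong B^4$, equivalently $Y\cong S^3$: the homological input alone leaves open the possibility that $Y$ is a nontrivial homology sphere, homology-cobordant to $S^3$, bounding a homology-ball Liouville domain. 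Closing this gap is where the symplectic and complex-analytic structure must be used essentially---either by showing that the cap $Z$ is forced to be a standard neighborhood of $C$, or through a Floer- or monopole-theoretic rigidity statement for symplectically convex homology-ball fillings---and I expect this identification, rather than the homological reduction, to be the crux of the argument.
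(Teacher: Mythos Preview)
Your Step 3 correctly identifies the obstacle, and this obstacle is genuine: from McDuff's theorem you obtain only a (rational) homology cobordism $Z\setminus\mathrm{int}(N)$ from $Y$ to $S^3$, and no amount of Floer-theoretic rigidity currently available will promote an arbitrary homology cobordism to $S^3$ into a diffeomorphism $Y\cong S^3$. The approaches you suggest for closing the gap (forcing $Z$ to be a standard neighborhood of $C$, or a Floer/monopole rigidity statement) are not how the paper proceeds, and it is not clear either of them can be made to work.

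The paper closes the gap by using polynomial convexity in a way that your argument does not. After Step 1, you only invoke rational convexity (Duval--Sibony, Gromov, McDuff), discarding the extra strength of polynomial convexity. The paper instead appeals to a consequence of Oka's work: for a polynomially convex domain $W\subset\cee^2$, one can attach handles of index at most $2$ to $W$, \emph{inside} $\cee^2$, to obtain a standard $B^4$. Thus the cobordism $B^4\setminus\mathrm{int}(W)$ from $Y$ to $S^3$ is not merely a rational homology cobordism but a \emph{ribbon} one (built from $1$- and $2$-handles only, going from $Y$ to $S^3$). This is exactly the additional structure needed: a theorem of Gordon, as recast by Daemi--Lidman--Vela-Vick--Wong, says that a ribbon rational homology cobordism from $Y$ to $S^3$ forces $Y\cong S^3$. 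Finally, $W$ is then a Stein filling of $(S^3,\xi_0)$, hence diffeomorphic to $B^4$ by Eliashberg. Note that with this route your careful Step~1 reduction to an \emph{integer} homology sphere is unnecessary; the ribbon-cobordism argument operates directly at the level of rational homology.
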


Here, the definition of polynomial convexity is identical to that of rational convexity with rational functions replaced by polynomials (with the same caveat regarding terminology as before). To prove the theorem, note that it is a consequence of work of Oka \cite{oka} (see also \cite[Remark 3.3]{CE-qconvex}) that if $W$ is as in the statement then one can add handles of index $\leq 2$ to $W$ in $\cee^2$ to obtain a 4-ball $B^4\subset \cee^2$. Then $B^4- \Int(W)$ is a rational homology cobordism from $\partial W$ to $S^3$, which is ribbon in the sense of \cite{DLVVW}. By \cite[Theorem 1.14]{DLVVW} (which is essentially a result of Gordon \cite{gordon81} combined with Perelman's geometrization theorem) it follows that $\partial W\cong S^3$, and in particular $W$ is a Stein filling of $S^3$. It was proved by Eliashberg \cite{E-filling} that any such filling is diffeomorphic to $B^4$. \hfill$\Box$
\end{remark}

Our obstruction to contact type embeddings can be described as follows. Recall that the Heegaard Floer homology of a compact, oriented 3-dimensional manifold $Y$ is a vector space over the field $\F = \zee/2\zee$, written $HF^+(Y)$. This vector space is always infinite-dimensional over $\F$, but has a natural finite-dimensional quotient $HF^+_{red}(Y)$, the {\it reduced} Heegaard Floer homology. If $\xi$ is a contact structure on $Y$, there is an associated element $c^+(\xi)\in HF^+(-Y)$ that is an invariant of $\xi$ up to contact isotopy, where $-Y$ denotes the 3-manifold $Y$ with the opposite orientation. If $\xi$ is overtwisted then $c^+(\xi) = 0$ while if $\xi$ is strongly fillable then $c^+(\xi)\neq 0$. Theorem \ref{obstrthm} below (particularly Corollary \ref{reducedcor}) shows that if $(Y,\xi)$ is embedded in $(\arr^4, \omega_{std})$ as the boundary of a symplectically convex domain then, although $c^+(\xi)$ is nonzero, its image in $HF^+_{red}(-Y)$ vanishes. Our proof of Theorem \ref{mainthm} amounts to showing that, for any symplectically fillable contact structure on a Brieskorn sphere that bounds a 4-manifold with the integer homology of the 4-ball, this condition is not satisfied.

Unfortunately, the typical Brieskorn sphere admits {\it many} inequivalent fillable contact structures, and the obstruction just described applies to only one at a time. A key difficulty in the proof of Theorem \ref{mainthm} is that there is currently no general classification of contact structures on Brieskorn spheres. We take an approach that allows us to detect when the image of $c^+(\xi)$ in $HF^+_{red}(-Y)$ is nonzero, without particular assumptions on a symplectic filling of $\xi$. Our strategy is to study two numerical invariants associated to knots $K$ embedded in 3-manifolds: one, written $\tau_{sm}$, depends only on the (smooth) topology of $(Y,K)$, while the other, $\tau_\xi$, reflects aspects of the contact geometry. We observe that, from their definitions, if $c^+(\xi)$ vanishes in $HF^+_{red}(-Y)$, then these two invariants satisfy an inequality (Lemma \ref{taulemma}). Using a new estimate on the {\it twisting number} of any contact structure on a Brieskorn sphere (Theorem \ref{twistthm}), we show that this inequality is violated for any contact structure on a Brieskorn sphere that bounds a homology ball.

The paper is organized as follows.  In Section \ref{obstrsec1} we give the details of the obstruction to rational convexity mentioned above, and in Section \ref{obstrsec2} describe the invariants $\tau_{sm}$ and $\tau_\xi$. Section \ref{estimatesec} contains estimates on $\tau_{sm}$ and $\tau_\xi$ for the regular Seifert fiber in a Brieskorn sphere. In Section \ref{twistsec} we obtain a lower bound for the twisting number of any contact structure on a Brieskorn homology sphere, and this combined with results of Sections \ref{obstrsec} and \ref{estimatesec} proves Theorem \ref{mainthm}. In Section \ref{examplesec1} we deduce some additional consequences of our results for two questions that, it turns out, can be addressed by similar methods: namely, whether a contact structure is supported by an open book with pages of genus 0, and the existence a strong symplectic cobordism to the 3-sphere. The remainder of Section \ref{examplesec} provides some further details and examples regarding the relations between the conditions B1, B2, B3, B3' and B4, including illustrations of the use and limitations of our methods. The reader with particular interest in these questions, including examples of Stein domains in $\cee^2$ that cannot be made Weinstein with respect to $\omega_{std}$, may read Sections \ref{examplesec2} through \ref{unobstrsec} first.

\subsection*{Acknowledgements} The authors are grateful to Bob Gompf for his interest in this project and many useful remarks and suggestions. Our thanks to Stefan Nemirovski for helpful comments on an early draft of this paper. We are also grateful to the referees for their careful reading and many suggestions. Part of this research was carried out in July 2019 while the authors were attending a PCMI workshop on ``Quantum Field Theory and Manifold Invariants". The authors are grateful to PCMI for the support and wonderful research environment. The first author was supported in part by a grant from the Simons Foundation (523795, TM).  Part of the research included in this article was carried out during the second author’s research stay in Montreal in Fall 2019. This research visit was supported in part by funding from the Simons Foundation and the Centre de Recherches Mathmatiques, through the Simons-CRM scholar-in-residence program. The second author is grateful to CRM and CIRGET, and in particular to Steve Boyer for their wonderful hospitality. The second author was also supported in part by grants from  NSF DMS-2105525 and the Simons Foundation (636841, BT). Finally, the authors wish to thank Sarah Zampa for pointing out an error in the original version of this paper.

\section{Floer invariants and contact type embeddings}\label{obstrsec}

\subsection{An obstruction to contact type embedding}\label{obstrsec1}  Recall \cite{OS:hf1,OS:hf2} that a closed, connected, oriented 3-manifold $Y$, equipped with a \spinc structure $\s$, gives rise to a chain complex $CF^-(Y,\s)$ which is a finitely generated free complex over the polynomial ring $\F[U]$. Here $\F$ denotes the field with two elements; the theory can be developed with more general coefficients but we will not need this. Note that the Heegaard Floer chain complex $CF^-(Y,\s)$ depends on certain auxiliary choices, notably a Heegaard decomposition of $Y$, but is independent of such choices up to chain homotopy equivalence. In particular, the homology of $CF^-(Y,\s)$ is a topological invariant of the pair $(Y,\s)$ denoted $HF^-(Y,\s)$, and is a module over $\F[U]$. The sense in which this module itself (rather than its isomorphism type) is an invariant of $(Y,\s)$ is explored in \cite{JTZ}. 

Localizing with respect to the variable $U$ gives rise to a short exact sequence
\[
\begin{tikzcd}
0\arrow[r]& CF^-(Y,\s) \arrow[r]& CF^{\infty}(Y,\s)\arrow[r] &CF^+(Y,\s)\arrow[r]& 0
\end{tikzcd}
\]
of complexes over $\F[U]$, where $CF^\infty = CF^- \otimes \F[U,U^{-1}]$ and $CF^+ = CF^\infty / CF^-$. In obvious notation, the associated long exact sequence in homology reads
\begin{equation}\label{les}
\begin{tikzcd}
\cdots\arrow[r]& HF^{\infty}(Y,\s)\arrow[r]& HF^+(Y,\s)\arrow[r, "\delta"]& HF^-(Y,\s)\arrow[r]& HF^\infty(Y,\s)\arrow[r]&\cdots
\end{tikzcd}
\end{equation}
where the connecting homomorphism $\delta$ induces an isomorphism $\delta: HF^+_{red}(Y,\s)\to HF^-_{red}(Y,\s)$ between the {\it reduced groups} defined by
\[
HF^+_{red}= HF^+/ \mathrm{Im}(HF^\infty\to HF^+) \quad\mbox{and}\quad HF^-_{red} = \ker(HF^-\to HF^\infty).
\]

The induced action of $U$ on $CF^+$ is surjective, and we have another short exact sequence
\begin{equation}\label{plusseq}
\begin{tikzcd}
0\arrow[r]& \cfhat(Y,\s)\arrow[r]& CF^+(Y,\s) \arrow[r, "U"]& CF^+(Y,\s)\arrow[r]& 0
\end{tikzcd}
\end{equation}
where $\cfhat = \ker(U)$. The associated sequence in homology is
\begin{equation}\label{hatles}
\begin{tikzcd}
\cdots \arrow[r]& \hfhat(Y,\s)\arrow[r]&HF^+(Y,\s)\arrow[r, "U"]& HF^+(Y,\s)\arrow[r]& \cdots.
\end{tikzcd}
\end{equation}

Heegaard Floer homology is functorial under cobordisms, in the following sense. By a cobordism $W: Y_1\to Y_2$ we mean a smooth, compact, connected, oriented 4-manifold with $\partial W = -Y_1\sqcup Y_2$ as oriented manifolds, where $Y_1$, $Y_2$ are closed oriented 3-manifolds as above and $-Y_1$ means the 3-manifold $Y_1$ with its orientation reversed. In this situation, a choice of \spinc structure $\TT$ on $W$ (together with some auxiliary choices) gives rise to $\F[U]$-chain maps between Heegaard Floer complexes for $Y_1$ and $Y_2$, compatible with the sequences above, whose associated homomorphism on homology depends only on the diffeomorphism type of $(W,\TT)$. Specifically, we have $\F[U]$-homomorphisms
\[
F_{W,\TT}^\circ: HF^\circ(Y_1, \s_1)\to HF^\circ(Y_2, \s_2)
\]
where ${}^\circ$ indicates any of the flavors $-, \infty, +$ and where $\s_i = \TT|_{Y_i}$. By changing the roles of $Y_1$ and $Y_2$, the same manifold $W$ can also be thought of as a cobordism from $-Y_2$ to $-Y_1$, which we write $\overline{W}$. Hence: 
\[
F_{\overline{W},\TT}^\circ: HF^\circ(-Y_2, \s_2)\to HF^\circ(-Y_1, \s_1).
\]
Now suppose $Y$ is equipped with a (co-oriented, positive) contact structure $\xi$. In \cite{OS:contact}, Ozsv\'ath and Szab\'o defined an element $\hat{c}(\xi) \in \hfhat(-Y, \s_\xi)$ that is an invariant of the isotopy class of contact structures determined by $\xi$. Here $\s_\xi$ is the \spinc structure naturally determined by the oriented plane field $\xi\subset TY$; note that \spinc structures on $Y$ and $-Y$ are in natural bijection. This invariant, which we call the {\it contact invariant} associated to $\xi$, has led to remarkable progress in 3-dimensional contact topology, and it plays a key role in our study of symplectic convexity. 

Recall that a {\it strong symplectic cobordism} between contact manifolds $(Y_1,\xi_1)$ and $(Y_2,\xi_2)$ is a cobordism $W: Y_1\to Y_2$ in the sense above, equipped with a symplectic form $\omega$ such that:
\begin{itemize}
\item There is a vector field $v$ defined near $\partial W$ and transverse to $\partial W$, pointing into $W$ along $Y_1$ and out of $W$ along $Y_2$, which is Liouville for $\omega$.
\item The 1-forms $\alpha_i\in \Omega^1(Y_i)$ given by the restrictions of $\iota_v\omega$ to $Y_i$, which are contact forms by the Liouville condition, define the contact structures $\xi_i$.
\end{itemize}

In the following, we write $c^+(\xi)$ for the image of $\hat{c}(\xi)$ under the map $\hfhat(-Y)\to HF^+(-Y)$ \eqref{hatles}, and $(S^3, \xi_0)$ for the 3-sphere with its standard isotopy class of tight contact structure.

\begin{theorem}\label{obstrthm} Let $(Y,\xi)$ be a contact 3-manifold, and $\xi_0$ the standard contact structure on the 3-dimensional sphere. If there is a strong symplectic cobordism from $(Y,\xi)$ to $(S^3, \xi_0)$, then the contact invariant $c^+(\xi)$ lies in the image of the map $HF^\infty(-Y, \s_\xi)\to HF^+(-Y, \s_\xi)$. 
\end{theorem}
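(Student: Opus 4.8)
The plan is to deduce the statement from the naturality of the contact invariant under symplectic cobordisms together with the vanishing of the reduced Floer homology of the three-sphere. Note first that, by the definition $HF^+_{red}=HF^+/\mathrm{Im}(HF^\infty\to HF^+)$, the assertion that $c^+(\xi)$ lies in the image of $HF^\infty(-Y,\s_\xi)\to HF^+(-Y,\s_\xi)$ is precisely the statement that the image of $c^+(\xi)$ in $HF^+_{red}(-Y,\s_\xi)$ vanishes. I will prove it by pushing the contact invariant of $(S^3,\xi_0)$ forward across the cobordism.

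Write $(W,\omega)$ for the given strong symplectic cobordism from $(Y,\xi)$ to $(S^3,\xi_0)$, with $Y$ its concave and $S^3$ its convex end. Fix an $\omega$-compatible almost complex structure on $W$ and let $\s_\omega$ be the associated canonical \spinc structure, which restricts to $\s_\xi$ on $Y$ and to $\s_{\xi_0}$ on $S^3$. Regarding $W$ with reversed orientation as a cobordism $\overline{W}\colon -S^3\to -Y$ yields a map
\[
F^+_{\overline{W},\s_\omega}\colon HF^+(-S^3,\s_{\xi_0})\longrightarrow HF^+(-Y,\s_\xi).
\]
The key input is that this map carries contact invariant to contact invariant,
\[
F^+_{\overline{W},\s_\omega}\bigl(c^+(\xi_0)\bigr)=c^+(\xi);
\]
this is the image under the natural map $\hfhat\to HF^+$ of the analogous identity for $\hat{c}$, the functoriality of the contact invariant of \OnS\ \cite{OS:contact} (using that cobordism maps are compatible with \eqref{hatles}). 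I expect this naturality to be the main obstacle: it is classical for Stein (Weinstein) cobordisms, but a general strong symplectic cobordism need not be Weinstein, and establishing the naturality in the required generality is the delicate point.

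Granting this, the conclusion follows formally. Since $-S^3\cong S^3$ has $HF^+_{red}(S^3)=0$, the natural map $\pi\colon HF^\infty(-S^3,\s_{\xi_0})\to HF^+(-S^3,\s_{\xi_0})$ of \eqref{les} is surjective, so we may choose $x$ with $\pi(x)=c^+(\xi_0)$. The cobordism maps commute with the maps of \eqref{les}, that is $\pi\circ F^\infty_{\overline{W},\s_\omega}=F^+_{\overline{W},\s_\omega}\circ\pi$, whence
\[
c^+(\xi)=F^+_{\overline{W},\s_\omega}\bigl(\pi(x)\bigr)=\pi\bigl(F^\infty_{\overline{W},\s_\omega}(x)\bigr).
\]
This displays $c^+(\xi)$ in the image of $HF^\infty(-Y,\s_\xi)\to HF^+(-Y,\s_\xi)$, as desired. (Should $b_2^+(W)>0$ force $F^+_{\overline{W},\s_\omega}=0$, then $c^+(\xi)=0$ lies trivially in this image, and the argument still applies.)
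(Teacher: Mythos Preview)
Your overall strategy is exactly the same as the paper's: push the contact invariant of $(S^3,\xi_0)$ back along the reversed cobordism and use that $HF^+_{red}(-S^3)=0$. The formal part of your argument (lifting through $\pi$ and using compatibility of cobordism maps with the long exact sequence) is fine.

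The gap is precisely the point you yourself flag. The identity
\[
F^+_{\overline{W},\s_\omega}\bigl(c^+(\xi_0)\bigr)=c^+(\xi)
\]
for a general \emph{strong} symplectic cobordism is not a theorem in Heegaard Floer theory; \cite{OS:contact} gives naturality only for Stein (Weinstein) cobordisms, and the paper states explicitly that the behavior of the Heegaard Floer contact invariant under arbitrary strong symplectic cobordisms ``has not yet been established.'' So you cannot simply grant this step.

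The paper closes this gap by passing to monopole Floer homology. The needed naturality \emph{is} known there, by Echeverria \cite{mariano} (building on Mrowka--Rollin): for a strong symplectic cobordism $(W,\omega)\colon (Y_1,\xi_1)\to (Y_2,\xi_2)$, the map $\hmto(-Y_2)\to\hmto(-Y_1)$ induced by $\overline{W}$ with the canonical \spinc structure sends $\cc(\xi_2)$ to $\cc(\xi_1)$. One then runs your algebraic argument entirely on the monopole side, using $j_*\colon\hmto(-S^3)\to\hmfrom(-S^3)$ vanishing, to get $\cc(\xi)\in\mathrm{Im}(\hmbar\to\hmto)$. Finally, the isomorphisms of Kutluhan--Lee--Taubes and Colin--Ghiggini--Honda identify the monopole and Heegaard Floer groups, the long exact sequences, and the contact invariants \emph{at the level of 3-manifolds}; this is enough to transport the conclusion back to $HF^+$, without ever needing the (unknown) compatibility of the isomorphisms with cobordism maps. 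That last point is the subtlety: you cannot deduce the Heegaard Floer naturality statement itself from the monopole one, but you can deduce its \emph{consequence} for $c^+(\xi)$, which is all that is required.
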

\begin{proof} Recall that there is a parallel theory to Heegaard Floer homology, developed by Kronheimer and Mrowka \cite{KMbook} and called monopole Floer homology. Monopole Floer groups come in three ``flavors'' $\hmbar(Y,\s)$, $\hmfrom(Y,\s)$ and $\hmto(Y,\s)$ that are modules over $\F[U]$ related by a long exact sequence analogous to \eqref{les}. Deep work of Kutluhan, Lee and Taubes \cite{KLTall} and (independently) of Colin, Ghiggini and Honda \cite{CGHall}, relying on additional results of Taubes \cite{TaubesECH1}, shows that there are $U$-equivariant isomorphisms
\[
\hmbar(Y,\s)\cong HF^\infty(Y,\s)\qquad \hmfrom(Y,\s)\cong HF^-(Y,\s)\qquad \hmto(Y,\s)\cong HF^+(Y,\s),
\]
and these maps commute with those in the long exact sequences. Moreover, there is an analog of the invariant $c^+(\xi)$ in monopole Floer homology, which we write simply as $\cc(\xi)$, and the isomorphism $\hmto(-Y,\s_\xi)\to HF^+(-Y,\s_\xi)$ sends one contact invariant to the other. (This is stated in \cite[Theorem 8.1]{CGHsummary} where $\hat{c}(\xi)$ is identified with a corresponding invariant in embedded contact homology, and the invariant $c^+(\xi)$ corresponds as well by the results of \cite{CGH3}. That the contact invariant in embedded contact homology corresponds to the monopole contact invariant $\cc(\xi)$ follows from work of Taubes \cite{TaubesECH1}.) In light of these correspondences, it suffices for the theorem to prove the corresponding statement for the contact invariant in monopole Floer homology.

The essential input for that proof is a result of Echeverria \cite{mariano}, building on work of Mrowka and Rollin \cite{MR06}. According to the main result of \cite{mariano}, if $(W,\omega): (Y_1,\xi_1)\to (Y_2,\xi_2)$ is a strong symplectic cobordism, then the homomorphism $\hmto(-Y_2,\s_{\xi_2})\to \hmto(-Y_1,\s_{\xi_1})$ induced by $\overline{W}$, equipped with the \spinc structure associated to $\omega$, carries the contact invariant of $\xi_2$ to that of $\xi_1$. 

The result now follows quickly for algebraic reasons, as observed in Corollary 10 of \cite{mariano}. Let $(W,\omega)$ be a strong cobordism to $(S^3,\xi_0)$ as in the statement. Since the homomorphism $j_*: \hmto(-S^3)\to \hmfrom(-S^3)$ vanishes we have $0 = j_*F_{\overline W}(\cc(\xi_{std})) = j_*(\cc(\xi))$, which is equivalent to $\cc(\xi)$ lying in $\mathrm{Im}(\hmbar(-Y,\s_{\xi})\to \hmto(-Y,\s_\xi))$. Applying the isomorphism between the monopole and Heegaard Floer theories in dimension 3 gives the conclusion.
\end{proof}


We remark that the equivalence between monopole and Heegaard Floer homology is not known to extend to the maps induced by cobordisms. In particular, while it is known that the Heegaard Floer contact invariant behaves naturally under Stein cobordisms, for example, its behavior under a general strong symplectic cobordism has not yet been established.

\begin{corollary}\label{reducedcor} Let $W$ be a compact domain in $\arr^4$, with smooth, connected boundary, and assume there exists a Liouville field for $\omega_{std}$ defined near $\partial W$ and directed transversely out of $\partial W$. Let $(Y,\xi)$ be the boundary of $W$ with the induced contact structure and orientation. Then $c^+(\xi)$ is nonzero and in the image of the homomorphism $HF^\infty(-Y,\s_\xi)\to HF^+(-Y,\s_\xi)$. Equivalently, the image of $c^+(\xi)$ in $HF^+_{red}(-Y)$ is zero.
\end{corollary}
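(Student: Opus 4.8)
The plan is to reduce the corollary to Theorem~\ref{obstrthm} by exhibiting a strong symplectic cobordism from $(Y,\xi)$ to $(S^3,\xi_0)$, and to obtain the nonvanishing of $c^+(\xi)$ from the fact that $W$ is itself a strong filling of $(Y,\xi)$. I note first that the two stated conclusions are literally equivalent: by definition $HF^+_{red}(-Y)=HF^+(-Y)/\im\bigl(HF^\infty(-Y,\s_\xi)\to HF^+(-Y,\s_\xi)\bigr)$, so the image of $c^+(\xi)$ in $HF^+_{red}(-Y)$ vanishes if and only if $c^+(\xi)$ lies in $\im\bigl(HF^\infty(-Y,\s_\xi)\to HF^+(-Y,\s_\xi)\bigr)$. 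Thus no separate work is needed for the final sentence once the middle assertion is proved.

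For the cobordism, I would fix a round ball $B_R\subset\arr^4$ of large radius with $W\subset\Int(B_R)$, and set $X=B_R\setminus\Int(W)$, viewed as a cobordism $X\colon Y\to S^3$ with $\partial X=-Y\sqcup S^3_R$, equipped with $\omega_{std}$. Near the two disjoint boundary components I use two different Liouville fields, each Liouville for $\omega_{std}$: near $Y$ the field supplied by hypothesis, which points out of $W$ and hence into $X$; and near $S^3_R$ the radial Liouville field $\tfrac12\sum(x_i\partial_{x_i}+y_i\partial_{y_i})$, which points out of $B_R$ and hence out of $X$. Since the definition of a strong symplectic cobordism requires a Liouville field only in a neighborhood of $\partial X$, and these neighborhoods are disjoint, this data is admissible. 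The contact structure induced on $Y$ is $\xi$ by hypothesis, and the one induced on $S^3_R$ by the radial field is, by definition, the standard tight $\xi_0$. Granting that $X$ is connected---which follows because $Y$ is connected and bounds the compact domain $W$, so the unbounded component of $\arr^4\setminus Y$ is connected and meets $B_R$ in the connected set $X$---the pair $(X,\omega_{std})$ is a strong symplectic cobordism from $(Y,\xi)$ to $(S^3,\xi_0)$. Theorem~\ref{obstrthm} then yields exactly that $c^+(\xi)\in\im\bigl(HF^\infty(-Y,\s_\xi)\to HF^+(-Y,\s_\xi)\bigr)$.

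It remains to prove $c^+(\xi)\neq0$. Here I would use that $W$, with $\omega_{std}$ and the outward Liouville field along $Y$, is a strong symplectic filling of $(Y,\xi)$. Removing a small open Darboux ball $B$ from $\Int(W)$ produces a strong symplectic cobordism $W'\colon (S^3,\xi_0)\to(Y,\xi)$: along $\partial B\cong S^3$ the outward radial field of $B$ points into $W'$ and induces $\xi_0$, while along $Y$ the given field points out of $W'$ and induces $\xi$. Passing to the monopole contact invariant $\cc$ (as in the proof of Theorem~\ref{obstrthm}, via the isomorphism $\hmto(-Y,\s_\xi)\cong HF^+(-Y,\s_\xi)$) and applying Echeverria's naturality to $\overline{W'}$, the induced map $\hmto(-Y,\s_\xi)\to\hmto(-S^3)$ carries $\cc(\xi)$ to $\cc(\xi_0)$. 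Since $\cc(\xi_0)\neq0$ (equivalently $c^+(\xi_0)\neq0$, the bottom class of the tower in $HF^+(S^3)$), we conclude $\cc(\xi)\neq0$, hence $c^+(\xi)\neq0$. Alternatively, one may simply cite the known nonvanishing of the contact invariant for strongly fillable contact structures, $W$ being such a filling.

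I expect the only genuinely delicate points to be bookkeeping rather than conceptual: verifying that $X$ and $W'$ meet the precise definition of a strong symplectic cobordism---in particular the inward/outward directions of the Liouville fields along each end, and the orientation conventions making $Y$ the incoming end of $X$ and the outgoing end of $W'$---and confirming that the contact structures induced at the auxiliary spheres are standard. The substantive inputs, namely Theorem~\ref{obstrthm} and Echeverria's naturality for the monopole contact invariant, are already in hand, so no new Floer-theoretic machinery is required.
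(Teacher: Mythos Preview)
Your proposal is correct and follows essentially the same approach as the paper: construct the cobordism $B_R\setminus\Int(W)$ with the two Liouville fields and apply Theorem~\ref{obstrthm}, then obtain $c^+(\xi)\neq 0$ from the fact that $W$ is a strong filling. The paper simply cites Ghiggini for the nonvanishing rather than re-deriving it via Echeverria's naturality, but your argument for that step is valid and your attention to connectedness of the cobordism and the equivalence of the two formulations is a welcome addition of detail.
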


\begin{proof} That $c^+(\xi)\neq 0$ follows since $\xi$ admits a strong symplectic filling \cite[proof of Theorem 2.13]{Ghiggini:fillability}). Choose a sufficiently large ball $B_R^4\subset \arr^4$, so that $W\subset \mathrm{int}(B^4_R)$, and let $Z = B^4_R - \mathrm{int}(W)$. Then $Z$ with the restriction of $\omega_{std}$ is a strong symplectic cobordism from $(Y,\xi)$ to $(S^3_R, \xi_0)$, with the given Liouville field near $\partial W$ together with the standard radial Liouville field near $S^3_R$. The result follows from the previous theorem.
\end{proof}

As in the introduction, we call a domain $W\subset (\arr^4, \omega_{std})$ with outwardly-oriented contact type boundary as in the Corollary a {\it symplectically convex} domain.

We note that if $Y$ is smoothly embedded in $\arr^4$, then $Y$ is the boundary of a smooth, compact domain $W\subset \arr^4$. If the embedding is of contact type, and the Liouville field $v$ near $Y$ extends across $W$ as a Liouville field for $\omega_{std}$, then an easy argument using Stokes' theorem implies that $v$ is directed out of $\partial W$. In other words, $W$ is symplectically convex and the orientation on $Y$ induced by $v$ agrees with that induced by $W$. It is not hard to check that this condition holds whenever $Y$ is a rational homology sphere.


\begin{remark}
The condition of symplectic convexity is essential in the above corollary; in particular it is not equivalent to assuming that $W$ is (strictly) pseudoconvex, i.e. Stein. Indeed, there are many examples of contractible Stein manifolds that embed as Stein domains in $\cee^2$, for which the contact invariant of the induced contact structure on the boundary does not satisfy the conclusion of Corollary \ref{reducedcor} (see Section \ref{examplesec} below). Note that if $W$ is pseudoconvex then in the notation of the proof above, the manifold $Z = B^4_R - \mathrm{int}(W)$, with the restriction of the standard K\"ahler form from $\cee^2$, is a {\it weak} symplectic cobordism from $(Y, \xi)$ to $(S^3, \xi_0)$ and is strong at $S^3$. In particular the hypotheses of Echeverria's naturality result cannot in general be relaxed to allow such weak cobordisms.
\end{remark}

\begin{remark} If, in the situation of Corollary \ref{reducedcor}, the manifold $Y$ is a rational homology sphere, then it is not hard to show that the Liouville fields defined near $\partial Z$ extend to a Liouville field on all of $Z$. In other words, $Z$ is a {\it Liouville cobordism} from $(Y, \xi)$ to $(S^3,\xi_0)$. If this Liouville structure can always be strengthened to a Weinstein structure, then as in Theorem \ref{polyconvthm}, work of Gordon \cite{gordon81} as recast by Daemi--Lidman--Vela-Vick--Wong \cite{DLVVW} implies that in fact $Y$ must always be the 3-dimensional sphere: this would provide a negative answer to Question \ref{ctquestion}. There exist examples of Liouville cobordisms that cannot be made Weinstein, however (see \cite{mcduff91,geiges95} for example), and in general the distinction between the two is subtle. Indeed, if we suppose that $W$ is pseudoconvex, then the hypothesis in Corollary \ref{reducedcor} is the same as asking that $W$ be isotopic to a rationally convex domain. The distinction between a Liouville and Weinstein structure on $Z$ is then equivalent to the distinction between rational convexity and polynomial convexity of the domain $W$.
\end{remark}

\begin{remark} While our proof of Theorem \ref{mainthm} makes use of the obstruction of Theorem \ref{obstrthm} and Corollary \ref{reducedcor}, it is worth pointing out that we do {\it not} prove that every fillable contact structure $\xi$ on an arbitrary Brieskorn sphere has the property that the image of $c^+(\xi)$ in $HF^+_{red}$ is nonzero. Indeed, this condition is not satisfied by the unique tight contact structure on the Poincar\'e sphere $\Sigma(2,3,5)$, which is Stein fillable.  In that sense our arguments are specific to the embedding problem we consider, but see Corollary \ref{strongcobordcor} below.
\end{remark}

Finally, we point out that the condition obtained in Theorem \ref{obstrthm} has appeared in another context: it was shown in \cite{OSS:planar} that this condition is necessary for a contact structure to be supported by an open book decomposition having pages of genus zero. See Theorem \ref{planarthm} for our results on this question.

\subsection{Floer invariants for knots}\label{obstrsec2}

Our aim is to show that for certain classes of 3-manifolds $Y$ the conclusion of Corollary \ref{reducedcor} does not hold for any fillable contact structure $\xi$, ruling out the existence of a symplectically convex domain with boundary $Y$. To facilitate the discussion, recall that for any \spinc rational homology sphere $(Y,\s)$, there is an isomorphism $HF^\infty(Y,\s)\cong \F[U,U^{-1}]$ \cite[Theorem 10.1]{OS:hf2}, and in fact we have 
\[
 {\mathrm{Im}}(HF^\infty(-Y,\s_\xi)\to HF^+(-Y,\s_\xi)) \cong \F[U,U^{-1}] / \F[U]
 \]
 as $\F[U]$-modules. Since $c^+(\xi)$ is the image of $\hat{c}(\xi)$ it is clear from \eqref{hatles} that for any contact structure $\xi$ on $Y$ the contact invariant satisfies $Uc^+(\xi) = 0$. Let $\Theta^+\in HF^+(-Y, \s_\xi)$ be the unique nonzero element in $\ker(U) \cap {\mathrm{Im}}(HF^\infty(-Y,\s_\xi)\to HF^+(-Y,\s_\xi))$. Then if $c^+(\xi)$ is nonzero and different from $\Theta^+$, it follows from the preceding and Corollary \ref{reducedcor} that $\xi$ does not arise as the induced contact structure at the boundary of a symplectically convex domain in $\arr^4$. Hence, our aim will be to show that for any (relevant, e.g. fillable) contact structure $\xi$ on $Y$, the contact invariant is different from $\Theta^+$. 
 
%
%

Let $K\subset Y$ be a knot, which is to say a smoothly embedded circle. To simplify the statements to follow we assume that $Y$ is an integral homology sphere; in particular there is a unique \spinc structure on $Y$. In this situation Ozsv\'ath and Szab\'o \cite{OS:hfk} show how to associate to $(Y,K)$ a filtration on (a representative of the chain homotopy class of) $CF^-(Y)$; this chain complex with the data of the filtration is written $CFK^-(Y,K)$. It is a (free, finitely generated) module over $\F[U]$ as before, and its filtered chain homotopy type is an invariant of $(Y,K)$.

Localizing with respect to $U$ as before produces the variants $CFK^{\infty}(Y,K)$ and $CFK^+(Y,K)$, and the latter gives rise to the complex $\cfkhat(Y,K)$ as in \eqref{plusseq}. In particular, $\cfkhat(Y,K)$ is a complex homotopic to $\cfhat(Y)$, together with a filtration
\begin{equation}\label{filt}
\cdots \subset \eff_{m-1}\subset \eff_m \subset \eff_{m+1}\subset \cdots \subset \cfhat(Y)
\end{equation}
by subcomplexes $\eff_m$, such that $\eff_m = 0$ for $m \ll 0$ and $\eff_m = \cfhat(Y)$ for $m\gg 0$. 

If we fix a nonzero class $\hat x\in \hfhat(Y)$, then one obtains an integer-valued invariant $\hat{\tau}_{\hat x}(Y,K)$ for knots in $Y$ by declaring
\begin{equation}\label{tauhatdef}
\hat{\tau}_{\hat x}(Y,K) = \min \{m \,|\, {\hat x}\in \im(i_m: H_*(\eff_m)\to \hfhat(Y))\}
\end{equation}
where $i_m$ indicates the map in homology induced by the inclusion $\eff_m \subset \cfhat(Y)$.

Similarly, if one fixes a class $x^+\in HF^+(Y)$ we can define
\begin{equation}\label{tauplusdef}
\tau^+_{x^+}(K) = \min\{ m\,|\, x^+\in\im(\rho\circ i_m: H_*(\eff_m)\to HF^+(Y))\}
\end{equation}
where 
\[
\rho: \hfhat(Y) \to HF^+(Y)
\]
is the homomorphism in the long exact sequence \eqref{hatles}. The notation $\hat{\tau}$ (resp. $\tau^+$) is meant to suggest that the invariant is derived by considering the interaction between the knot filtration and a fixed class in $\hfhat$ (resp. $HF^+$). Note that whenever $\rho(\hat{x})$ is nonzero, we have 
\begin{equation}\label{tauineq}
\tau^+_{\rho(\hat{x})}(Y,K) \leq \hat{\tau}_{\hat{x}}(Y,K)
\end{equation}
for all $K$, with equality if $\rho$ happens to be injective.

The invariant $\tau(K)$ for a knot in $S^3$ is defined (by Ozsv\'ath-Szab\'o \cite{OS:hfkgenus}; a similar construction was considered by Rasmussen \cite{Rasmussen}) to be $\tau(K) = \hat{\tau}_{\hat \Theta}(S^3,K)$ where $\hat \Theta$ is the unique nonzero element of $\hfhat(S^3)$. More generally we have:

\begin{definition}[Raoux \cite{raoux}, Definition 2.2] For a knot $K$ in an integer homology sphere $Y$ the {\it smooth tau-invariant} of $K$ is defined to be
\[
\tau_{sm}(Y,K) = \tau^+_{\Theta^+}(Y,K),
\]
where $\Theta^+$ is the unique nonzero element of $HF^+(Y)$ with $U\Theta^+ = 0$ and $\Theta^+\in \im (HF^\infty(Y)\to HF^+(Y))$.
\end{definition}

Note that for the case $Y = S^3$, the map $\hfhat(S^3)\to HF^+(S^3)$ is injective and carries $\hat{\Theta}$ to $\Theta^+$, and therefore $\tau_{sm}(S^3, Y) = \tau(K)$. We also remark that Raoux's definition is more general than the one we have given, in that it applies to knots in rational homology spheres, where moreover the knot may not be nullhomologous. In this case the filtration \eqref{filt} is more subtle to define, and in particular filtration values may not be integral. Raoux addresses this issue by adjusting filtration values by a number $k_{\s}$ depending on the chosen \spinc structure $\s$ on $Y$; when $Y$ is an integer homology sphere as in our situation, there is a unique \spinc structure $\s$ and $k_\s = 0$. 

Now observe that a knot $K\subset Y$ can be regarded as a knot in $-Y$ by reversing the ambient orientation. It therefore gives a filtration of $\cfhat(-Y)$, and a nonzero class in $\hfhat(-Y)$ gives rise to a $\tau$-invariant as above.

\begin{definition}[Hedden \cite{heddentau}] Let $(Y,\xi)$ be a contact 3-manifold with the property that the contact invariant $\hat{c}(\xi) \in \hfhat(-Y)$ is nonzero. For a knot $K\subset Y$, the {\it contact tau-invariant} of $K$ is defined to be
\[
\tau_\xi(Y,K) = -\hat{\tau}_{\hat{c}(\xi)}(-Y,K).
\]
\end{definition}

This is not quite the definition Hedden gives, but by \cite[Proposition 28]{heddentau}, it is equivalent.

\begin{lemma}\label{taulemma} Let $Y$ be an integer homology sphere and $\xi$ a contact structure on $Y$ such that $c^+(\xi) = \Theta^+\in HF^+(-Y)$. Then for any knot $K\subset Y$, we have
\[
\tau_\xi(Y,K) \leq \tau_{sm}(Y,K).
\]
\end{lemma}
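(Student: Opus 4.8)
The plan is to move the entire comparison to the orientation-reversed manifold $-Y$, where both the contact class $\hat{c}(\xi)$ and the distinguished element $\Theta^+$ live, and then to use the general inequality \eqref{tauineq} together with the conjugation symmetry of knot Floer homology. To begin, recall that by definition $c^+(\xi) = \rho(\hat{c}(\xi))$, where $\rho:\hfhat(-Y)\to HF^+(-Y)$ is the map appearing in \eqref{hatles}. Thus the hypothesis $c^+(\xi) = \Theta^+$ says exactly that $\rho(\hat{c}(\xi)) = \Theta^+\in HF^+(-Y)$; in particular $\hat{c}(\xi)\neq 0$ and $\rho(\hat{c}(\xi))\neq 0$, so that both $\tau_\xi(Y,K)$ and the inequality \eqref{tauineq} are available on $-Y$.

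The core step is then to apply \eqref{tauineq} on $-Y$ with the class $\hat{x} = \hat{c}(\xi)$. Because $\rho(\hat{c}(\xi)) = \Theta^+$, the left-hand side is $\tau^+_{\Theta^+}(-Y,K) = \tau_{sm}(-Y,K)$, while the right-hand side is $\hat{\tau}_{\hat{c}(\xi)}(-Y,K) = -\tau_\xi(Y,K)$. This yields
\[
\tau_{sm}(-Y,K)\;\leq\;-\tau_\xi(Y,K),\qquad\text{equivalently}\qquad \tau_\xi(Y,K)\;\leq\;-\tau_{sm}(-Y,K),
\]
using nothing beyond the definitions of the two $\tau$-invariants and the hypothesis.

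It remains to establish the orientation-reversal estimate $\tau_{sm}(Y,K) + \tau_{sm}(-Y,K)\geq 0$, for then $-\tau_{sm}(-Y,K)\leq\tau_{sm}(Y,K)$ and the lemma follows at once. I expect this to be the main obstacle. The approach is to invoke the conjugation symmetry of the knot filtration: $CFK^{\infty}(-Y,K)$ is filtered chain homotopy equivalent to the dual of $CFK^{\infty}(Y,K)$ with all filtration levels negated, so that the subcomplexes $\eff_m$ used to define $\tau_{sm}$ for one orientation become quotient complexes for the other. Pushing the definition of $\tau^+_{\Theta^+}(-Y,K)$ through this duality realizes $-\tau_{sm}(-Y,K)$ as a ``quotient-level'' threshold for the appearance of $\Theta^+$ on $Y$, which one then compares with the ``sub-level'' threshold $\tau_{sm}(Y,K)$ by means of the long exact sequence relating a subcomplex, the ambient complex, and the corresponding quotient. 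The delicate point is that $\Theta^+$ is detected only through $\rho$ and the image of $HF^\infty$, which need not be injective once $HF^+_{red}(Y)\neq 0$; this is exactly why one obtains an inequality rather than the equality $\tau_{sm}(Y,K) = -\tau_{sm}(-Y,K)$ familiar from the case $Y = S^3$. Should the orientation-reversal behaviour of $\tau_{sm}$ be available directly from Raoux's work, this last step reduces to a citation and the proof concludes immediately from the displayed inequality.
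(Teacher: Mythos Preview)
Your argument is essentially identical to the paper's: apply \eqref{tauineq} on $-Y$ with $\hat{x} = \hat{c}(\xi)$, then invoke the orientation-reversal behavior of $\tau_{sm}$. The paper resolves your final step by citing \cite[Proposition 3.10(1)]{raoux}, which gives the exact equality $\tau_{sm}(-Y,K) = -\tau_{sm}(Y,K)$ (not merely the inequality you anticipated), so your sketched duality argument is unnecessary and the proof concludes exactly as you say in your last sentence.
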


\begin{proof} From \eqref{tauineq}, we see that if $\hat{x}\in \hfhat(Y)$ has $\rho(\hat{x}) = \Theta^+$, then $\tau_{sm}(Y,K) = \tau^+_{\Theta+}(Y,K) \leq \hat{\tau}_{\hat{x}}(Y, K)$. According to \cite[Proposition 3.10(1)]{raoux}, $\tau_{sm}(-Y,K) = -\tau_{sm}(Y,K)$, and therefore since $\Theta^+ = c^+(\xi) = \rho(\hat{c}(\xi))$,
\[
\tau_\xi(Y,K) = -\hat{\tau}_{\hat{c}(\xi)}(-Y,K) \leq -\tau^+_{\Theta+}(-Y,K) = \tau_{sm}(Y,K).
\]
\end{proof}

By Corollary \ref{reducedcor}, the preceding lemma implies:

\begin{corollary}\label{convexcor} If the integer homology sphere $(Y,\xi)$ is the contact boundary of a symplectically convex domain in $\arr^4$, then for any knot $K\subset Y$ we have
\[
\tau_\xi(Y,K) \leq \tau_{sm}(Y,K).
\]
The same is true if, more generally, there exists a strong symplectic cobordism from $(Y,\xi)$ to $(S^3, \xi_0)$.
\end{corollary}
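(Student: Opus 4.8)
The plan is to deduce the statement directly from Corollary~\ref{reducedcor} and Lemma~\ref{taulemma}; the only genuine content is to verify that the contact invariant $c^+(\xi)$ is precisely the distinguished element $\Theta^+\in HF^+(-Y,\s_\xi)$ that appears in the hypothesis of Lemma~\ref{taulemma}. I would take the cobordism statement as primary, since the symplectically convex domain case reduces to it: exactly as in the proof of Corollary~\ref{reducedcor}, enclosing $W$ in a large ball $B^4_R$ and setting $Z = B^4_R - \mathrm{int}(W)$ produces a strong symplectic cobordism from $(Y,\xi)$ to $(S^3_R,\xi_0)$. So it suffices to treat a strong symplectic cobordism $(W,\omega)\colon (Y,\xi)\to (S^3,\xi_0)$, and to show in that setting that $c^+(\xi)=\Theta^+$.

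The algebraic core is short. Because $c^+(\xi)$ is the image of $\hat{c}(\xi)$ under $\rho$, the relation $Uc^+(\xi)=0$ holds for every contact structure. Thus, once we know that $c^+(\xi)$ is nonzero and lies in $\mathrm{Im}\bigl(HF^\infty(-Y,\s_\xi)\to HF^+(-Y,\s_\xi)\bigr)$, it follows that $c^+(\xi)$ is a nonzero element of $\ker(U)\cap\mathrm{Im}(HF^\infty\to HF^+)$. Since $Y$ is an integer homology sphere this image is isomorphic to $\F[U,U^{-1}]/\F[U]$, so that intersection contains a unique nonzero element, which is $\Theta^+$ by definition. Hence $c^+(\xi)=\Theta^+$, and Lemma~\ref{taulemma} then applies verbatim to give $\tau_\xi(Y,K)\le\tau_{sm}(Y,K)$ for every knot $K\subset Y$.

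It remains to supply the two inputs, that $c^+(\xi)$ is nonzero and that it lies in the image of $HF^\infty$. In the symplectically convex domain case Corollary~\ref{reducedcor} delivers both at once. In the general cobordism case, Theorem~\ref{obstrthm} immediately gives the image condition, but not the nonvanishing. For this I would cap off: since the standard ball strongly fills $(S^3,\xi_0)$, gluing it to the outgoing end of $W$ yields a strong symplectic filling of $(Y,\xi)$, and therefore $c^+(\xi)\ne 0$ by \cite[proof of Theorem 2.13]{Ghiggini:fillability}, precisely as in Corollary~\ref{reducedcor}. This also guarantees $\hat{c}(\xi)\ne 0$, so that $\tau_\xi(Y,K)$ is in fact defined.

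The step I expect to require the most care is this nonvanishing $c^+(\xi)\ne 0$ in the general cobordism setting: Theorem~\ref{obstrthm} alone only places $c^+(\xi)$ in the image of $HF^\infty$, leaving open the possibility $c^+(\xi)=0$, in which case neither the identification with $\Theta^+$ nor the very definition of $\tau_\xi$ would be available. Ruling this out is exactly what forces the capping argument, and it is the one place where the hypothesis of a cobordism to the \emph{tight} sphere $(S^3,\xi_0)$, rather than to an arbitrary contact $3$-manifold, enters. Everything else is a formal consequence of the $\F[U]$-module structure of $HF^+(-Y,\s_\xi)$ together with the two cited results.
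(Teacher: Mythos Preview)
Your argument for the symplectically convex case is correct and matches the paper's (one-line) proof exactly: Corollary~\ref{reducedcor} gives both $c^+(\xi)\neq 0$ and $c^+(\xi)\in\mathrm{Im}(HF^\infty\to HF^+)$, whence $c^+(\xi)=\Theta^+$ by the uniqueness observation you spell out, and Lemma~\ref{taulemma} finishes.

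For the general cobordism case, however, your capping argument is incorrect. In a strong cobordism $W$ from $(Y,\xi)$ to $(S^3,\xi_0)$ the Liouville field points \emph{out} of $W$ along $S^3$, so $(S^3,\xi_0)$ is the \emph{convex} end. The standard ball also has $(S^3,\xi_0)$ as its \emph{convex} boundary. You cannot symplectically glue two convex ends: the Liouville fields both point outward along $S^3$, so no collar matching produces a symplectic form on the union. Gluing a \emph{cap} for $(S^3,\xi_0)$ instead would yield a symplectic manifold with \emph{concave} boundary $(Y,\xi)$, i.e.\ a cap for $(Y,\xi)$, not a filling, and this does not force $c^+(\xi)\neq 0$. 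In general, a strong cobordism from $(Y,\xi)$ to a fillable contact manifold does not imply that $(Y,\xi)$ is fillable: concave ends of strong cobordisms can even be overtwisted. So the nonvanishing of $c^+(\xi)$ in the cobordism case is not supplied by your argument, and without it you cannot identify $c^+(\xi)$ with $\Theta^+$ nor guarantee that $\tau_\xi$ is defined.

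It is worth noting that the paper's own proof (``By Corollary~\ref{reducedcor}, the preceding lemma implies'') also addresses only the symplectically convex case and does not spell out the nonvanishing for the general cobordism statement; the corollary is only applied in the paper via its convex-domain formulation (in Corollary~\ref{ratboundcor}), where the issue does not arise.
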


\begin{remark} 
Heuristically, the smooth tau-invariant is defined as the first filtration level $j$ at which $\Theta^+$ is in the image of the homology of $\eff_j$, while the contact tau-invariant is the first $j$ for which the contact invariant $c(\xi)$ is in that image. If $c^+(\xi) = \Theta^+$, one expects these invariants to agree---and therefore, exhibiting a knot for which the two tau-invariants {\it disagree} suffices to show $c^+(\xi)\neq \Theta^+$ and hence $\xi$ is not filled by a symplectically convex domain. Lemma \ref{taulemma} gives only an inequality because strictly Hedden's contact tau-invariant is defined using $\hat{c}(\xi)$ rather than its image $c^+(\xi)$ (and there is the attendant headache of the orientation reversal). This inequality suffices for our purposes, so we work with Hedden's definition. However, there is a modification of Hedden's invariant that {\it is} equal to $\tau_{sm}$ in the situation of Lemma \ref{taulemma}. Namely, for a knot $K$ in a contact manifold $(Y,\xi)$ with the property that $c^+(\xi)$ is nonzero, we can define
\[
\tau^+_\xi(Y,K) = -\tau^+_{c^+(\xi)}(-Y,K)
\]
in the notation of \eqref{tauplusdef}. It is then easy to check that when $c^+(\xi)$ is nonzero,
\begin{enumerate}
\item For any $K$ in $Y$, we have $\tau_\xi(Y,K) \leq \tau^+_\xi(Y,K)$.
\item If $c^+(\xi)$ is in the kernel of the map $HF^+(-Y)\to HF^+_{red}(-Y)$, or equivalently if $c^+(\xi) = \Theta^+$, then $\tau^+_\xi(Y,K) = \tau_{sm}(Y,K)$. 
\end{enumerate}
In particular $\tau^+_\xi(Y,K)$ gives an upper bound for $\tb(K) + |\rot(K)|$ for any Legendrian $K$, since $\tau_\xi$ does (see below), which is the essential property used in our proof. 
\end{remark}

\section{Estimates for Seifert manifolds}\label{estimatesec}

We now turn to Brieskorn spheres to begin the proof of Theorem \ref{mainthm}. By definition, the Brieskorn sphere $\Sigma(a_1,\ldots, a_n)$ is the link of a complete intersection singularity described as follows. Choose a collection of complex constants $\{c_{i,j}\}$ where $1\leq i \leq n$ and $1\leq j \leq n-2$. Then consider the algebraic surface $V(a_1,\ldots, a_n)\subset \cee^n$ given by
\[
V(a_1,\ldots, a_n) = \{ (z_1,\ldots z_n) \, | \, c_{1,j}z_1^{a_1} + \cdots + c_{n,j}z_n^{a_n} = 0, \, j = 1,\ldots, n-2\},
\]
the intersection of $n-2$ hypersurfaces. For sufficiently generic choice of coefficients $c_{i,j}$, this variety has an isolated singularity at the origin, and $\Sigma(a_1,\ldots, a_n)$ is intersection of $V(a_1,\ldots, a_n)$ with a sufficiently small sphere. The diffeomorphism type of $\Sigma(a_1,\ldots, a_n)$ depends only on $(a_1,\ldots, a_n)$. From now on we suppose that $n\geq 3$ and each $a_j\geq 2$, and moreover that $a_1,\ldots, a_n$ are pairwise relatively prime. The latter condition is equivalent to the assumption that $\Sigma(a_1,\ldots, a_n)$ is an integer homology sphere. See \cite{neumann77,neumannraymond,milnor:brieskorn,savelievbook} for additional details and references.

As the link of a weighted homogeneous singularity $\Sigma(a_1,\ldots, a_n)$ carries an action of the circle, leading to a description as a Seifert manifold and a surgery presentation, which are obtained as follows. We can find integers $(b, b_1, \ldots, b_n)$ such that
\begin{equation}\label{seifeqn}
a_1\cdots a_n \sum_k \frac{b_k}{a_k} = 1 + b\cdot a_1\cdots a_n,
\end{equation}
and in fact the numbers $b$, $b_1,\ldots, b_n$ are determined by \eqref{seifeqn} up to the simultaneous replacement of any $b_j$ by $b_j \pm a_j$ and $b$ by $b\pm 1$. (Thus $b_j$ is uniquely determined modulo $a_j$.) In particular, we can arrange that $b = 0$, in which case the collection $(a_1,b_1),\ldots, (a_n, b_n)$ are said to be {\it unnormalized} Seifert invariants for $\Sigma(a_1,\ldots, a_n)$. 
Then $\Sigma(a_1,\ldots, a_n)$ is diffeomorphic to the 3-manifold specified by the surgery diagram in Figure \ref{small}. (We are following the notation and conventions of \cite{savelievbook}.)
\begin{figure}[h!]
\begin{center}
\def\svgwidth{1.5in}
\begingroup%
  \makeatletter%
  \providecommand\color[2][]{%
    \errmessage{(Inkscape) Color is used for the text in Inkscape, but the package 'color.sty' is not loaded}%
    \renewcommand\color[2][]{}%
  }%
  \providecommand\transparent[1]{%
    \errmessage{(Inkscape) Transparency is used (non-zero) for the text in Inkscape, but the package 'transparent.sty' is not loaded}%
    \renewcommand\transparent[1]{}%
  }%
  \providecommand\rotatebox[2]{#2}%
  \newcommand*\fsize{\dimexpr\f@size pt\relax}%
  \newcommand*\lineheight[1]{\fontsize{\fsize}{#1\fsize}\selectfont}%
  \ifx\svgwidth\undefined%
    \setlength{\unitlength}{241.94688692bp}%
    \ifx\svgscale\undefined%
      \relax%
    \else%
      \setlength{\unitlength}{\unitlength * \real{\svgscale}}%
    \fi%
  \else%
    \setlength{\unitlength}{\svgwidth}%
  \fi%
  \global\let\svgwidth\undefined%
  \global\let\svgscale\undefined%
  \makeatother%
  \begin{picture}(1,0.84510593)%
    \lineheight{1}%
    \setlength\tabcolsep{0pt}%
    \put(0,0){\includegraphics[width=\unitlength,page=1]{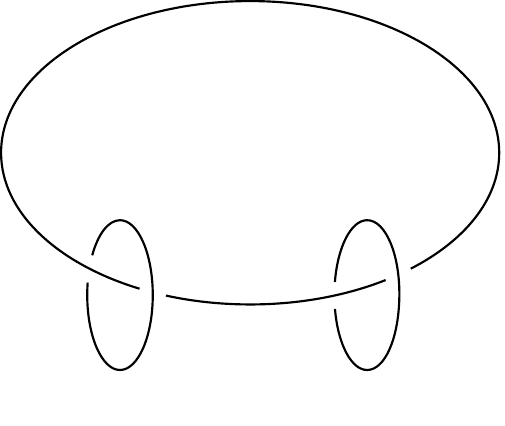}}%
    \put(0.42154882,0.15086174){\color[rgb]{0,0,0}\makebox(0,0)[lt]{\lineheight{1.25}\smash{\begin{tabular}[t]{l}$\cdots$\end{tabular}}}}%
    \put(0.18392724,0.006329){\makebox(0,0)[lt]{\lineheight{1.25}\smash{\begin{tabular}[t]{l}$\frac{a_1}{b_1}$\end{tabular}}}}%
    \put(0.67386865,0.00632891){\makebox(0,0)[lt]{\lineheight{1.25}\smash{\begin{tabular}[t]{l}$\frac{a_n}{b_n}$\end{tabular}}}}%
    \put(0.08348922,0.80003422){\makebox(0,0)[lt]{\lineheight{1.25}\smash{\begin{tabular}[t]{l}$0$\end{tabular}}}}%
  \end{picture}%
\endgroup%

 \caption{Seifert homology sphere $\Sigma(a_1, \ldots, a_n)$ with unnormalized Seifert invariants $(a_1,b_1), \ldots, (a_n, b_n)$.}
  \label{small}
\end{center}
\end{figure} 

Alternatively, we can choose representatives $\tilde{b}_j$ for each $b_j$ modulo $a_j$, such that for each $j$ the quantity $r_j = -\frac{\tilde{b}_j}{a_j}$ lies in the interval $(0,1)$, in which case $(b, \tilde{b}_1,\ldots, \tilde{b}_n)$ are uniquely specified and correspond to the {\it normalized} Seifert invariants of $\Sigma(a_1,\ldots, a_n)$. The integer $b$ arising in the normalized situation is an invariant of $\Sigma(a_1,\ldots, a_n)$ that is written $e_0$. In terms of Figure \ref{small}, one applies Rolfsen twists to replace $\frac{a_j}{b_j}$ by $\frac{a_j}{\tilde{b}_j}$for each $j$, so that $\frac{a_j}{\tilde b_j}$ is the unique fraction of the form $\frac{a_j}{b_j + k a_j}$ that is less than $-1$, and then the resulting framing on the large unknot is $e_0$. From \eqref{seifeqn} we find that for the normalized invariants,
\[
\sum_{j=1}^n r_j = -e_0 - \frac{1}{a_1\cdots a_n},
\]
so that $e_0\in \{-1,\ldots, -(n-1)\}$. In terms of unnormalized invariants $(a_j, b_j)$ we have
\begin{equation}\label{e0formula}
e_0 = \sum \lfloor -\ts\frac{b_j}{a_j}\rfloor.
\end{equation}

In the above we have implicitly specified an orientation for $\Sigma(a_1,\ldots, a_n)$, which is the same as the one induced by identifying $\Sigma(a_1,\ldots, a_n)$ with the link of a complex surface singularity. (The oppositely oriented Seifert manifold can be described by a similar procedure, by replacing 1 with $-1$ on the right side of \eqref{seifeqn}.) In particular, with this orientation, a Seifert homology sphere can be realized as the boundary of a negative definite plumbed 4-manifold diffeomorphic to a good resolution of the corresponding  singularity \cite[Theorem 5.2 and Corollary 5.3]{neumannraymond}. A description of this plumbed manifold can be obtained as follows: beginning with the normalized invariants $(e_0, (a_1,\tilde{b}_1),\ldots, (a_n, \tilde{b}_n))$, consider the continued fraction expansion
\[
\frac{a_j}{\tilde{b}_j} = k_{j,1} - \frac{1}{k_{j,2} - \frac{1}{\cdots - \frac{1}{k_{j,\ell_j}}}} = [k_{j,1},\ldots, k_{j, \ell_j}],\]
where each $k_{j,i}\leq -2$. Then $\Sigma(a_1,\ldots, a_n)$ is orientation-preserving diffeomorphic to the boundary of the 4-manifold obtained by plumbing disk bundles according to the graph $\Gamma$ in Figure \ref{plumbfig}. 

We emphasize that in the arguments to follow, the condition that $\Sigma(a_1,\ldots, a_n)$ bound a negative definite plumbing is used repeatedly. In particular, our arguments do not obviously adapt to the case of $-\Sigma(a_1,\ldots, a_n)$, which by \cite[Theorem 5.2]{neumannraymond} does not bound a negative definite plumbing. In several statements to follow we consider ``a Seifert homology sphere $Y = \Sigma(a_1,\ldots, a_n)$'': this phrase simultaneously introduces the symbol $Y$ for the manifold and emphasizes the chosen orientation, since by \cite[Theorem 4.1]{neumannraymond} a Seifert integer homology sphere is uniquely determined up to orientation by the multiplicities $a_1,\ldots, a_n$.

\begin{figure}[h!]
\begin{center}
 \includegraphics[width=7cm]{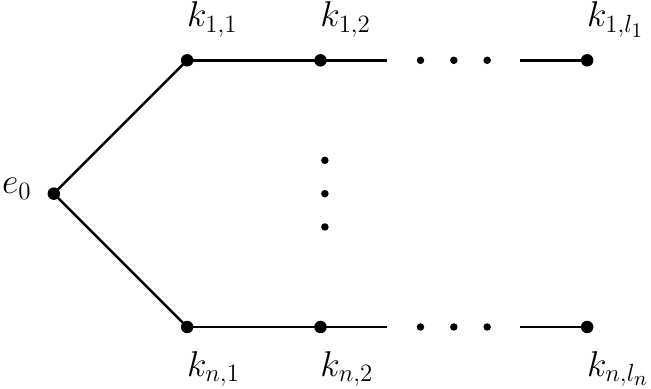}
 \caption{Graph $\Gamma$ describing a plumbing of disk bundles over 2-spheres. The resulting 4-manifold $X_\Gamma$ has boundary the Seifert homology sphere $\Sigma(a_1, \cdots, a_n)$.}
  \label{plumbfig}
\end{center}
\end{figure} 

The structure of $\Sigma(a_1,\ldots, a_n)$ as a Seifert manifold (that is, equipped with a circle action with finite stabilizers) can be seen in Figure \ref{small}. Indeed, surgery along the unknot labeled $0$ gives rise to an $S^1$-bundle over $S^2$ having Euler number $0$, whose fibers appear as meridians to the surgery circle. Dehn surgery along the remaining circles amounts to replacing $n$ regular fibers by exceptional fibers of multiplicities $a_1,\ldots, a_n$. In particular, a generic meridian to the $0$-framed circle is a regular fiber of the Seifert structure, and can be identified with the boundary circle of a disk in the bundle of Euler number $e_0$ appearing in the plumbing of Figure \ref{plumbfig}. 

\subsection{Smooth tau for a regular fiber} Let $K\subset Y = \Sigma(a_1,\ldots, a_n)$ be a knot isotopic to a regular Seifert fiber. The description above of $K$ as the boundary of a disk in a plumbed manifold means that $K$ is {\it slice} in the negative-definite plumbed manifold $X_\Gamma$. This fact constrains the value of $\tau_{sm}(Y,K)$, according to the following result of Raoux \cite{raoux}, generalizing \cite[Theorem 1.1]{OS:hfkgenus}. Again, our formulation is simplified by the assumption that $Y$ is an integer homology sphere.

\begin{theorem}[Raoux \cite{raoux}]\label{raouxthm} Let $X$ be a negative definite 4-manifold with boundary an integer homology sphere $Y$, and $K\subset Y$ a knot. Then for any smooth surface $\Sigma$ properly embedded in $X$ with boundary $K$, we have
\[
\langle c_1(\TT), [\Sigma]\rangle + [\Sigma]\cdot[\Sigma] + 2\tau_{sm}(Y,K) \leq 2g(\Sigma),
\]
for every sharp \spinc structure $\TT$ on $X$.
\end{theorem}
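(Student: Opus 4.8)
The plan is to reduce the inequality to a statement about where the distinguished class $\Theta^+$ sits relative to the knot filtration, and then to realize $\Theta^+$ at the required filtration level using the geometry of $\Sigma$. By definition $\tau_{sm}(Y,K) = \tau^+_{\Theta^+}(Y,K)$ is the least $m$ for which $\Theta^+$ lies in the image of $\rho\circ i_m\colon H_*(\eff_m)\to HF^+(Y)$ as in \eqref{tauplusdef}. Hence it suffices to exhibit $\Theta^+$ in the image of $\rho\circ i_m$ for
\[
m = g(\Sigma) - \tfrac12\big(\langle c_1(\TT),[\Sigma]\rangle + [\Sigma]\cdot[\Sigma]\big),
\]
since this forces $\tau_{sm}(Y,K)\le m$, which is exactly the asserted inequality after rearranging.

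The realization comes from the Heegaard Floer cobordism maps, and I would first record the unfiltered input. Viewing $X_0 = X\setminus B^4$ as a cobordism $S^3\to Y$, the grading shift of $F^+_{X_0,\TT}$, namely $\tfrac14\big(c_1(\TT)^2 - 2\chi(X_0) - 3\sigma(X_0)\big)$, equals the correction term $d(Y)$ for a sharp \spinc structure---this is essentially the defining property of sharpness---and $d(Y)$ is precisely the grading of $\Theta^+$. Because $X$ is negative definite, this map is nontrivial on the image of $HF^\infty$, so $F^+_{X_0,\TT}$ carries the bottom generator of $HF^+(S^3)$ to $\Theta^+$; thus $\Theta^+$ lies in the image of the \emph{unfiltered} cobordism map, and the role of ``sharp'' is exactly to guarantee this. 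As a sanity check, when $X = B^4$, $Y = S^3$ and $\Sigma$ is a slice surface, all of $c_1(\TT)$, $[\Sigma]\cdot[\Sigma]$, $d(Y)$ vanish and the statement degenerates to the four-ball genus bound $\tau(K)\le g(\Sigma)$ of \cite{OS:hfkgenus}.

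The heart of the argument, and the step I expect to be the main obstacle, is to promote this to a \emph{filtered} statement: the surface $\Sigma$ makes $X_0$ a cobordism of pairs from $(S^3,\text{unknot})$ to $(Y,K)$, and I would show that the cobordism map respects the knot filtrations of \eqref{filt} with a controlled shift. Concretely, I would choose a Morse function on $\Sigma$, decompose it into elementary cobordisms (births, saddles, deaths), show each induces a filtered map, and bound the total shift: the genus and saddle count contribute $g(\Sigma)$, the normal framing contributes $[\Sigma]\cdot[\Sigma]$, and the evaluation of $c_1(\TT)$ on the capped-off surface contributes $\langle c_1(\TT),[\Sigma]\rangle$, so that the image of the generator lands in filtration level at most $m$. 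The delicate points are verifying that each elementary piece is filtered with the claimed shift and, crucially, that the class realized at filtration level $m$ is $\Theta^+$ itself rather than $0$ or a $U$-translate; here the $HF^\infty$-nontriviality and sharpness of the previous paragraph re-enter to pin down the image. Combining the two steps yields $\Theta^+\in\im(\rho\circ i_m)$ and hence the theorem; one may first normalize $[\Sigma]\cdot[\Sigma]$ by blowing up $X$ at points of $\Sigma$, which preserves negative definiteness, the genus, and the quantity $\langle c_1(\TT),[\Sigma]\rangle + [\Sigma]\cdot[\Sigma]$.
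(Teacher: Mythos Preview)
The paper does not contain a proof of this statement: Theorem~\ref{raouxthm} is quoted as a result of Raoux \cite{raoux} and is used as a black box (specifically, to derive Corollary~\ref{smtaucor}). There is therefore nothing in the paper to compare your proposal against.

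That said, your sketch is a faithful outline of the argument one finds in the literature for results of this type, beginning with \cite{OS:hfkgenus} and extended in \cite{raoux}. The reduction to showing $\Theta^+\in\im(\rho\circ i_m)$ for the indicated $m$ is correct, as is the role of sharpness: it is exactly what guarantees that $F^+_{X_0,\TT}$ sends the bottom of the tower in $HF^+(S^3)$ to $\Theta^+\in HF^+(Y)$ rather than to zero or to a higher $U$-power. The substantive work, as you note, is the filtered refinement: one must show that the cobordism map can be represented by a chain map that respects the Alexander filtrations with the stated shift. In practice this is handled not quite by decomposing $\Sigma$ into elementary pieces but by a direct analysis of relative \spinc structures and the Alexander grading on the knot complex under 2-handle attachments (this is where the quantity $\langle c_1(\TT),[\Sigma]\rangle + [\Sigma]\cdot[\Sigma]$ enters via the adjunction-type formula for the shift), together with the blow-up trick you mention to normalize self-intersection. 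Your identification of the ``delicate points'' is accurate; filling them in is precisely the content of Raoux's paper.
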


Here $[\Sigma]$ indicates the homology class of $\Sigma$ in $H_2(X, \partial X; \zee)$, identified with $H_2(X;\zee)$ using the fact that $Y$ is a homology sphere, and $[\Sigma]\cdot[\Sigma]$ indicates the intersection product of $[\Sigma]$ with itself. Equivalently, this is the self-intersection number of the closed surface obtained from the union of $\Sigma$ with a Seifert surface for the knot $K$ in $Y$.

A {\it sharp} \spinc structure on a negative-definite 4-manifold with integer homology sphere boundary $Y$ is one with a particular property with respect to the grading on Heegaard Floer homology, which we now describe. The Floer homology $HF^+(Y)$ can be given an integer-valued grading (more generally, there is a grading on $HF^+(Y,\s)$ whenever $c_1(\s)$ is a torsion element of $H^2(Y;\zee)$, though it may take values in $\cue$), and the element $\Theta^+$ described previously is homogeneous with respect to this grading. The degree of $\Theta^+$ is called the $d$-invariant of $Y$, and denoted $d(Y)$ (more generally, one gets a $d$-invariant $d(Y,\s)$ for any \spinc structure on a rational homology sphere; see \cite{OS:grading}). As a concrete case, we have $d(S^3) = 0$.

When $W: Y_1\to Y_2$ is a cobordism between rational homology spheres, the homomorphism on Floer homology $F_{W,\TT}$ is homogeneous for each \spinc structure $\TT$ on $W$, and has degree
\[
\deg(\TT) = \frac{1}{4}( c_1(\TT)^2 - 3\sigma(W) - 2\chi(W)),
\]
where $\sigma(W)$ and $\chi(W)$ are the signature of the intersection form and the (topological) Euler characteristic, respectively. 

Now suppose $X$ is a negative definite 4-manifold with boundary a rational homology sphere $Y$, and assume that $b_1(X) = 0$. By removing a ball from the interior of $X$, we obtain a negative definite cobordism $W: S^3\to Y$. For a \spinc structure $\TT$ on such $W$, the degree formula reduces to 
\[
\deg(\TT) = \frac{1}{4}(c_1(\TT)^2 + b_2(W)),
\]
and in particular this number is the degree of the image $F_{W,\TT}^+(\Theta_{S^3}) \in HF^+(Y)$ for $\Theta_{S^3}$  a generator of degree zero in $HF^+(S^3)$. 

According to \cite[Theorem 1.12]{OS:grading}, when $Y$ is an integer homology sphere, we have the inequality
\begin{equation}\label{dbound}
\deg(\TT) = \frac{1}{4}(c_1(\TT)^2 + b_2(W)) \leq d(Y)
\end{equation}
for each \spinc structure $\TT$ on $W$ (more generally, when $Y$ is a rational homology sphere a similar inequality holds for $d(Y,\s)$, for any \spinc structure $\TT$ extending $\s$).

\begin{definition} A negative definite 4-manifold $X$ having $b_1(X) = 0$ and boundary a homology sphere $Y$ is {\it sharp} if there exists a \spinc structure $\TT$ on $X$ realizing equality in \eqref{dbound}. Any such $\TT$  is said to be a {\it sharp} \spinc structure.
\end{definition}

\begin{proposition} Let $Y = \Sigma(a_1,\ldots,a_n)$ be a Seifert integer homology sphere oriented as the boundary of the negative-definite plumbed 4-manifold $X_\Gamma$ as above, and let $D\subset X_\Gamma$ denote the disk normal to the sphere corresponding to the vertex labeled $e_0$ in Figure \ref{plumbfig}. Assume that the intersection form of $X_\Gamma$ is diagonalizable. Then $X_\Gamma$ supports a sharp \spinc structure $\TT$ with the property that 
\begin{equation}\label{c1bound}
\langle c_1(\TT), [D]\rangle \geq \sqrt{a_1\cdots a_n}.
\end{equation}
\end{proposition}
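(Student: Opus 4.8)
The plan is to work entirely on the level of the intersection lattice $L = H_2(X_\Gamma;\zee)$ with its negative definite intersection form $M$, using the diagonalizability hypothesis to pin down the sharp $\spinc$ structures explicitly and then optimizing the pairing with $[D]$ by a sign‑choosing argument. First I would set up the lattice: since $Y$ is an integer homology sphere, $L$ is unimodular, so $|\det M| = 1$ in the basis of vertex classes $\{v_0, v_{j,i}\}$, with $v_0$ the central sphere of weight $e_0$. Diagonalizability lets me fix an orthonormal basis $e_1,\dots,e_V$ of $L$ with $e_p\cdot e_q = -\delta_{pq}$, where $V = b_2(X_\Gamma)$. In this basis, the characteristic vectors $\mathbf k\in L$ (identified with $\spinc$ structures via unimodularity) are exactly those with all coordinates $\kappa_p$ odd, and $\mathbf k^2 = -\sum_p \kappa_p^2$.

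Next I would identify the sharp structures. The only bad vertex of $\Gamma$ is the central one: each arm vertex has weight $\le -2$ and valence $\le 2$, so is good, while the central vertex has valence $n$ and weight $e_0\ge -(n-1) > -n$. By \OnS's computation of $HF^+$ for plumbed manifolds with at most one bad vertex, $X_\Gamma$ is sharp. Since $\mathbf k^2 = -\sum_p\kappa_p^2 \le -V$ with equality iff every $\kappa_p = \pm 1$, the maximum of $\tfrac14(\mathbf k^2 + V)$ is $0$; combined with sharpness and \eqref{dbound} this forces $d(Y)=0$, and the sharp $\spinc$ structures are precisely those with $\mathbf k^2 = -V$, i.e.\ $\kappa_p = \pm 1$ for all $p$. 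There are $2^V$ of these, and I am free to choose the signs.

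The computation of the pairing is where $a_1\cdots a_n$ enters. Because $Y$ is a homology sphere, the map $H_2(X_\Gamma;\zee)\to H_2(X_\Gamma,\partial X_\Gamma;\zee)$ is an isomorphism; let $y\in L$ be the preimage of $[D]$. As $D$ meets $v_0$ once and is disjoint from the arms, $y$ is characterized by $y\cdot v_0 = 1$ and $y\cdot v = 0$ for every arm vertex $v$, and $\langle c_1(\TT),[D]\rangle = \mathbf k\cdot y$. Writing $y = \sum_p y_p e_p$ with $y_p\in\zee$, I would choose the signs $\kappa_p$ so that $\mathbf k\cdot y = \sum_p |y_p|$; this $\TT$ is sharp, and by the elementary inequality $\|y\|_1 \ge \|y\|_2$,
\[
\langle c_1(\TT),[D]\rangle \;=\; \sum_p |y_p| \;\ge\; \Big(\sum_p y_p^2\Big)^{1/2} \;=\; (-\,y\cdot y)^{1/2}.
\]
Finally $y\cdot y = (M^{-1})_{00}$, and deleting the central row and column of $M$ leaves the block‑diagonal matrix of the $n$ arms, so Cramer's rule gives $|(M^{-1})_{00}| = |\det M|^{-1}\prod_j |\det M_j|$. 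Here $|\det M| = 1$, and the Hirzebruch--Jung identity $a_j/\tilde b_j = [k_{j,1},\dots,k_{j,\ell_j}]$ yields $|\det M_j| = a_j$ for each arm. Hence $-\,y\cdot y = a_1\cdots a_n$, and \eqref{c1bound} follows.

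I expect the main obstacle to be the bookkeeping of the third paragraph: correctly identifying $[D]$ with the dual class $y$ under the relative/absolute isomorphism, verifying integrality of the coordinates $y_p$, and keeping straight the sign conventions relating $c_1(\TT)$, the characteristic vector $\mathbf k$, and the pairing $\mathbf k\cdot y$. The continued‑fraction evaluation of the arm determinants is routine but must be pinned down exactly to produce the constant $a_1\cdots a_n$, while the conceptual crux is the passage from the $\ell^1$ to the $\ell^2$ norm, which is precisely what converts the lattice quantity $a_1\cdots a_n$ into its square root.
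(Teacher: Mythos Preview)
Your proposal is correct and follows essentially the same strategy as the paper's proof: identify the sharp $\spinc$ structures as those with $\pm 1$ coordinates in the diagonalizing basis (via \cite[Corollary 1.5]{OS:plumbing}), optimize the pairing with the dual class to the central vertex by choosing signs, apply $\|\cdot\|_1\ge\|\cdot\|_2$, and read off $-[D]\cdot[D]=a_1\cdots a_n$ from the $(1,1)$ entry of $Q^{-1}$ by Cramer's rule. Your version supplies a bit more detail than the paper does (the bad-vertex check and the Hirzebruch--Jung evaluation of the arm determinants), but the argument is the same.
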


\begin{remark} If $Y$ is the boundary of an integer homology ball, then the assumption on the intersection form of $X_\Gamma$ is satisfied. Indeed, gluing the homology ball (with appropriate orientation) to $X_\Gamma$, we obtain a smooth, closed, oriented 4-manifold with negative definite intersection form isomorphic to that of $X_\Gamma$. By Donaldson's theorem \cite{donaldson83}, the intersection form must be diagonalizable over $\zee$.
\end{remark}

\begin{proof} Let $X$ be an oriented 4-manifold, either closed or with integer homology sphere boundary, and let $\S(X)$ denote the set of \spinc structures on $X$. Recall (see, e.g., \cite[Section 2.4]{GS}) that there is a natural function $\S(X)\to H^2(X;\zee)$ given by $\TT\mapsto c_1(\TT)$, whose image is the set of ``characteristic elements'' of $H^2(X;\zee)$. By definition, an element $\alpha\in H^2(X;\zee)$ is characteristic if and only if the equality $\alpha(x) = x\cdot x$ holds modulo 2 for each homology class $x\in H_2(X;\zee)$. Furthermore, if $X$ is simply connected (or, more generally, if $H^2(X;\zee)$ contains no elements of order 2), then $\TT\mapsto c_1(\TT)$ is injective. This applies in particular to the manifold $X_\Gamma$ of the Proposition.

Consider a diagonalizing basis $\{e_1,\ldots, e_m\}$ for $H_2(X_\Gamma;\zee)$ with dual basis $\{\epsilon_1,\ldots, \epsilon_m\}$ for $H^2(X_\Gamma;\zee)$ (where $m = b_2(X)$): then $e_i\cdot e_j = -\delta_{ij}$.  For any \spinc structure $\TT$ on $X_\Gamma$ the coefficient of each $\epsilon_j$ in the basis expansion of $c_1(\TT)$ must be odd by the condition that $c_1(\TT)$ is characteristic, hence a \spinc structure maximizes the quantity $\frac{1}{4}(c_1(\TT)^2 + b_2(X_\Gamma))$ if and only if $c_1(\TT) = \sum_j (\pm 1)\epsilon_j$ (and any element of this form is the first Chern class of a unique \spinc structure).

Moreover, by \cite[Corollary 1.5]{OS:plumbing}, the manifold $X_\Gamma$ admits some sharp \spinc structure, and in particular $d(Y) = \max_{\TT} \frac{1}{4}(c_1(\TT)^2 + b_2(X_\Gamma))$. Thus $d(Y) = 0$, and the sharp \spinc structures are exactly those with $c_1(\TT) = \sum_j (\pm 1)\epsilon_j$.

We infer that there are exactly $2^m$ sharp \spinc structures; the corresponding Chern classes are called {\it sharp characteristic vectors.} We must see that \eqref{c1bound} can be realized. Let $\{v_1,\ldots, v_m\}$ be the basis of $H_2(X_\Gamma;\zee)$ represented by the spheres in the plumbing (the zero-sections of the disk bundles). The intersection form $Q$ on $H_2(X)$ can be expressed in this basis by a matrix $(Q_{ij})$, where
\[
Q_{ij} = Q(v_i, v_j) = v_i \cdot v_j.
\]
Thus the off-diagonal entries of $Q$ are 1 or 0 depending whether the corresponding vertices of $\Gamma$ are connected by an edge, and the diagonal entries $Q_{ii}$ are the Euler numbers of the disk bundles (equivalently, the self-intersections of the spheres). We number the vertices so that $v_1$ is the vertex labeled $e_0$ in Figure \ref{plumbfig}.

The element $[D]$ is characterized by the intersection properties $Q([D], v_j) = \delta_{1,j}$. Clearly, then, we can write $[D] = \sum (Q^{-1})_{1i}v_i$. We wish to estimate the maximum value of $\langle \kappa, [D]\rangle$, where $\kappa$ is a sharp characteristic vector. If we write $\kappa = \sum k_j \nu_j$ for $\{\nu_j\}$ the basis hom-dual to $\{v_j\}$, then 
\begin{equation}\label{charpairing}
\langle \kappa, [D]\rangle = \sum_{i,j} k_jQ^{-1}_{1i}\delta_{ij} = \sum_j k_j Q^{-1}_{1j} = Q^*(\kappa, \nu_1),
\end{equation}
where $Q^*$ is the dual intersection form on $H^2(X)$, represented in the basis $\{\nu_j\}$ by the inverse of $Q$.

If we now express the vectors $\kappa$ and $\nu_1$ in the diagonalizing basis $\{\epsilon_j\}$ then the coefficients of $\kappa$ are all $\pm 1$, and the maximum value obtained on the right of \eqref{charpairing} is clearly the $L^1$-norm of $\nu_1$ in this basis:
\[
\max_{\kappa\mbox{ \scriptsize sharp}} \langle \kappa, [D]\rangle = \|\nu_1\|_{L^1(\epsilon_j)}.
\]

Consider the transition matrix $B$ between the bases $\{\nu_j\}$ and $\{\epsilon_j\}$, i.e., 
\[
\nu_k = \sum_j B_{jk} \epsilon_j.
\]
Thus the $k$th column of $B$ gives the coefficients of $\nu_k$ in terms of $\{\epsilon_k\}$, and we are interested in estimating the $L^1$-norm of the first column.

In terms of matrices, we have that $B^{-1}$ diagonalizes the dual form $Q^*$:
\[
(B^{-1})^T Q^* B^{-1} = -{\mathbb I},
\]
or equivalently $Q^* = -B^T B$. It follows that the $(1,1)$ entry of the matrix $Q^*$ inverse to the intersection form $Q$ (in the basis $\{v_j\}$) is minus the square of the $L^2$-norm of the first column of $B$. The proposition follows from two observations:
\begin{enumerate}
\item The $L^2$ norm is a lower bound on the $L^1$ norm, and
\item The $(1,1)$ entry of the inverse of the intersection matrix $Q$ is $-a_1\cdots a_n$.
\end{enumerate}
Both of these are elementary; the first is the triangle inequality and the second can be seen, for instance, from Cramer's rule.

\end{proof}

Observe that in the notation of the above proof, the self-intersection $[D]\cdot [D]$ is exactly equal to the $(1,1)$ entry of $Q^*$, so that $[D]\cdot [D] = -a_1\cdots a_n$. With this and using the \spinc structure obtained in the Proposition, Theorem \ref{raouxthm} gives:

\begin{corollary}\label{smtaucor} If $Y = \Sigma(a_1,\ldots, a_n)$ is a Seifert integer homology sphere that bounds an integer homology ball (or more generally if the plumbed manifold $X_\Gamma$ has diagonalizable intersection form), then a knot $K$ isotopic to a regular fiber in the Seifert structure satisfies
\[
2\tau_{sm}(Y,K) \leq a_1\cdots a_n - \sqrt{a_1\cdots a_n}.
\]
\end{corollary}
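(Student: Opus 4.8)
The plan is to apply Raoux's adjunction-type inequality (Theorem \ref{raouxthm}) directly to the regular-fiber slice disk $D$, using the sharp \spinc structure produced by the preceding Proposition. First I would reduce the two hypotheses to a single case. If $Y$ bounds an integer homology ball, then gluing that ball (with appropriate orientation) to $X_\Gamma$ yields a closed, oriented, negative definite 4-manifold whose form is isomorphic to that of $X_\Gamma$; by Donaldson's theorem the intersection form of $X_\Gamma$ is then diagonalizable over $\zee$. This is precisely the content of the Remark preceding the Proposition, so in either case the Proposition's diagonalizability hypothesis is met and the Proposition applies.

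Next I would assemble the three inputs needed to feed into Theorem \ref{raouxthm}. The regular fiber $K$ bounds the disk $D\subset X_\Gamma$ normal to the sphere labeled $e_0$, as explained above; thus $D$ is a properly embedded surface in the negative definite manifold $X_\Gamma$ with $\partial D = K$ and genus $g(D)=0$, so $2g(D)=0$. From the computation following the Proposition we have $[D]\cdot[D] = -a_1\cdots a_n$ (this is the $(1,1)$ entry of the inverse intersection form). Finally, the Proposition furnishes a sharp \spinc structure $\TT$ on $X_\Gamma$ with $\langle c_1(\TT), [D]\rangle \geq \sqrt{a_1\cdots a_n}$.

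The corollary is then immediate. Substituting $\Sigma = D$ and this sharp $\TT$ into Theorem \ref{raouxthm} gives
\[
\langle c_1(\TT), [D]\rangle + [D]\cdot[D] + 2\tau_{sm}(Y,K) \leq 2g(D) = 0,
\]
whence
\[
2\tau_{sm}(Y,K) \leq -\langle c_1(\TT),[D]\rangle - [D]\cdot[D] \leq -\sqrt{a_1\cdots a_n} + a_1\cdots a_n = a_1\cdots a_n - \sqrt{a_1\cdots a_n},
\]
where the second inequality uses $-[D]\cdot[D] = a_1\cdots a_n$ together with the lower bound on the pairing $\langle c_1(\TT),[D]\rangle$.

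I expect essentially no obstacle at this stage, since all the substantive work—producing a sharp \spinc structure with a controlled lower bound on $\langle c_1(\TT),[D]\rangle$, and evaluating $[D]\cdot[D]$—has already been done in the Proposition and the observation following it, while the inequality itself is quoted from Raoux. The only points demanding care are bookkeeping: ensuring that the sign of $\langle c_1(\TT),[D]\rangle$ is chosen positive, which is legitimate because negating a sharp characteristic vector again yields a sharp characteristic vector, and tracking that $[D]\cdot[D]$ is negative so that the term $-[D]\cdot[D] = a_1\cdots a_n$ supplies the leading contribution to the bound.
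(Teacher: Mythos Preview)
Your proof is correct and follows exactly the paper's approach: apply Raoux's inequality (Theorem \ref{raouxthm}) to the slice disk $D$ with $g(D)=0$, using the sharp \spinc structure from the Proposition together with the computation $[D]\cdot[D]=-a_1\cdots a_n$. Your added remarks on reducing the homology-ball hypothesis via Donaldson's theorem and on the sign of $\langle c_1(\TT),[D]\rangle$ are apt but already covered by the Remark and the Proposition's statement, respectively.
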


\subsection{Contact tau for a regular fiber}\label{estimatesec2} Let $Y$ be an integer homology sphere, and consider a contact structure $\xi$ on $Y$. We imagine that $\xi$ is induced by a symplectically convex domain bounded by $Y$, so in particular we may assume that $\xi$ is strongly symplectically fillable. This implies that $\hat{c}(\xi)$ is nonzero, so the invariant $\tau_\xi(Y,K)$ for knots $K\subset Y$ is defined.

Recall that any knot in a contact manifold is smoothly isotopic to (many different) {\it Legendrian} knots, which are knots everywhere tangent to the contact distribution. If $K$ is a Legendrian, then the contact planes provide a framing of $K$, and if $K$ is nullhomologous then a (choice of) Seifert surface for $K$ defines a second framing, the Seifert framing. The difference between the contact framing and the Seifert framing is called the {\it Thurston-Bennequin invariant} of $K$ and is written $\tb(K)$. Having chosen a Seifert surface $S$ for $K$, the {\it rotation number} $\rot(K)$ of $K$ is defined to be the relative Euler number of $\xi$ restricted to $S$, relative to the trivialization of $\xi$ over $\partial S = K$ provided by the tangents to $K$. We have the following fundamental inequality due to Hedden, generalizing a result of Plamenevskaya \cite{plam04} for knots in $S^3$.

\begin{theorem}[Hedden \cite{heddentau}]\label{heddenthm} Let $(Y,\xi)$ be a contact 3-manifold with $\hat{c}(\xi)\neq 0$. Then for any nullhomologous Legendrian knot $K$ in $Y$ we have
\[
\tb(K) + |\rot(K)| \leq 2\tau_\xi(Y,K) - 1.
\]
\end{theorem}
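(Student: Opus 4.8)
The plan is to reduce the statement to a self-linking number bound for transverse knots. Recall that if $K_+$ and $K_-$ denote the positive and negative transverse push-offs of the Legendrian knot $K$, their self-linking numbers satisfy $sl(K_\pm) = \tb(K) \mp \rot(K)$, so that $\tb(K) + |\rot(K)| = \max\{sl(K_+), sl(K_-)\}$. Since the push-offs are smoothly isotopic to $K$ and $\tau_\xi$ depends only on the smooth knot type, we have $\tau_\xi(Y, K_\pm) = \tau_\xi(Y,K)$. Thus it suffices to prove the transverse bound $sl(T) \le 2\tau_\xi(Y,T) - 1$ for a transverse knot $T$ and then apply it to $T = K_+$ and $T = K_-$, taking the maximum.

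The heart of the argument is to exhibit a distinguished cycle representing the contact class $\hat{c}(\xi)$ whose position in the knot filtration is governed by $sl(T)$. Recall that $\tau_\xi(Y,K) = -\hat{\tau}_{\hat{c}(\xi)}(-Y,K)$, and that by \eqref{tauhatdef} the quantity $\hat{\tau}_{\hat{c}(\xi)}(-Y,K)$ is the least filtration level $m$ at which $\hat{c}(\xi)$ lies in the image of $i_m\colon H_*(\eff_m) \to \widehat{HF}(-Y)$. The transverse bound is therefore equivalent to producing a cycle in some $\eff_m$ with $m \le -\tfrac{sl(T)+1}{2}$ whose image in $\widehat{HF}(-Y)$ is $\hat{c}(\xi)$. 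I would construct such a cycle from an open book decomposition of $Y$ compatible with $\xi$ in which (a Legendrian representative of) $T$ lies on a page, using the description of the contact class, following Honda--Kazez--Mati\'c, as the homology class of a single distinguished generator of the associated Heegaard diagram. One then verifies directly on the adapted diagram that this generator carries filtration level (Alexander grading in $-Y$) exactly $-\tfrac{sl(T)+1}{2}$.

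Granting the grading computation, the conclusion is immediate: the distinguished cycle shows $\hat{\tau}_{\hat{c}(\xi)}(-Y,K) \le -\tfrac{sl(T)+1}{2}$, hence $\tau_\xi(Y,K) \ge \tfrac{sl(T)+1}{2}$, i.e. $sl(T) \le 2\tau_\xi(Y,K) - 1$. Applying this to $T = K_+$ and $T = K_-$ and taking the maximum yields $\tb(K) + |\rot(K)| \le 2\tau_\xi(Y,K) - 1$, as desired.

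The main obstacle is the grading computation of the middle paragraph: showing that the open-book generator representing $\hat{c}(\xi)$ genuinely occupies filtration level $-\tfrac{sl(T)+1}{2}$, rather than merely some level bounded above by it. This requires a judicious choice of compatible open book, for instance one obtained by stabilizing so that $T$ is the transverse push-off of a Legendrian arc on a page, together with an explicit identification of the knot filtration induced on the Heegaard diagram, tracking how positive and negative open-book stabilizations, which change $sl(T)$, shift the Alexander grading of the contact generator by the matching amount. A secondary but delicate point is the careful bookkeeping of the orientation reversal $Y \mapsto -Y$ and the corresponding sign in the filtration, which is precisely the source of the asymmetry encoded in the definition of $\tau_\xi$.
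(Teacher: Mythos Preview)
The paper does not prove this theorem; it is stated with attribution to Hedden \cite{heddentau} and used as a black box. There is therefore no proof in the paper to compare your attempt against.

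That said, your outline tracks the strategy of Hedden's original argument reasonably well: reduce to a self-linking bound for transverse knots via the push-off, then use an open book adapted to the knot together with the Honda--Kazez--Mati\'c description of the contact class to locate $\hat c(\xi)$ at a controlled filtration level. You correctly identify the substantive step as the Alexander-grading computation for the contact generator on an adapted Heegaard diagram, and you are right that this is where the real work lies. As written, though, your proposal is a plan rather than a proof: the middle paragraph defers the entire content (``I would construct\ldots'', ``one then verifies directly\ldots''), and the final paragraph explicitly flags the grading identity as an unresolved obstacle. If you intend this as a self-contained argument, you would need to actually carry out the construction of the adapted open book (e.g.\ placing a Legendrian approximation of $T$ on a page after suitable stabilizations) and the filtration computation, rather than gesturing at them.
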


Now suppose $Y$ is a Seifert manifold. A regular Seifert fiber in $Y$ inherits a natural framing from the Seifert structure, called the {\it fiber framing}: indeed, thinking of a framing of a knot as specified by an equivalence class of pushoff, a the fiber framing of a regular fiber is given by a second nearby regular fiber. A knot isotopic to a regular fiber receives, once such an isotopy is chosen, a fiber framing as well.

\begin{definition} 
Let $\xi$ be a tight contact structure on a Seifert 3-manifold, $L$ a Legendrian knot smoothly isotopic to a regular fiber, and $\phi$ a choice of such an isotopy. The {\it twisting number} $\tw(L, \phi))$ is the difference between the contact framing and the fiber framing induced by $\phi$. The {\it maximal twisting number}  $\tw(\xi)$ of $\xi$ is 
\[
\tw(\xi) = \sup_{L, \phi}\{\tw(L,\phi)\},
\]
the maximum taken over all such Legendrians $L$ and all choices of isotopy $\phi$.
\end{definition}

It can be seen that on a Brieskorn sphere $\tw(L, \phi)$ is independent of $\phi$, see \cite{Ghiggini:seifert}. Moroever if $\xi$ is a tight contact structure then $\tw(\xi)$ is finite.

\begin{lemma} For a Seifert homology sphere $Y = \Sigma(a_1,\ldots,a_n)$, the fiber framing of a regular fiber is equal to $a_1\cdots a_n$ (when measured with respect to the Seifert framing).
\end{lemma}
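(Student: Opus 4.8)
The plan is to read off the fiber framing from the plumbing description $X_\Gamma$, reusing the self-intersection computation already carried out for the normal disk $D$. Recall from the discussion preceding Corollary \ref{smtaucor} that a regular fiber $K$ is identified with the boundary of the disk $D$ normal to the central sphere $v_1$ (the vertex labeled $e_0$) in $X_\Gamma$, and that its self-intersection, computed by closing up $D$ with a Seifert surface for $K$ in $Y$, is
\[
[D]\cdot[D] = (Q^{-1})_{11} = -a_1\cdots a_n.
\]
By the remark following Theorem \ref{raouxthm}, this integer is precisely the self-intersection of $D$ measured with respect to the Seifert framing of $K$; that is, the Seifert framing is the reference framing for which this self-intersection equals $-a_1\cdots a_n$.

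The key geometric observation is that the fiber framing is exactly the framing for which $D$ has self-intersection $0$. Indeed, $D$ is a fiber of the disk bundle over $v_1$, and pushing $D$ off to a parallel fiber disk $D'$ over a nearby point of $v_1$ produces a disk disjoint from $D$ in the interior of $X_\Gamma$, so $D\cdot D' = 0$. Its boundary $K' = \partial D'$ is a nearby regular fiber of the Seifert structure, i.e. precisely the fiber-framed pushoff of $K$. Hence the self-intersection of $D$ computed with respect to the fiber framing vanishes.

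To conclude, I would invoke the elementary framing-change formula: for a properly embedded surface with boundary $K$, replacing the reference framing by one differing from it by an integer $d$ changes the self-intersection by $d$ (each additional positive twist of the pushoff contributes one boundary-parallel intersection point). Writing $N$ for the fiber framing measured against the Seifert framing, this gives $0 = (-a_1\cdots a_n) + N$, whence $N = a_1\cdots a_n$, as claimed.

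The main obstacle is purely one of sign and framing bookkeeping. One must verify (i) that the interior-disjoint parallel fiber disk $D'$ really induces the fiber framing, and not some other framing, on $\partial D$, and (ii) that the sign in the framing-change formula is consistent with the orientation conventions used in computing $(Q^{-1})_{11}$. Both are geometrically transparent but easy to get backwards; I would pin the sign down on the Poincar\'e sphere $\Sigma(2,3,5)$, where the expected answer $2\cdot 3\cdot 5 = 30$ is well known, before committing to the general statement. An alternative route, avoiding the plumbing, would compute $N$ directly from the surgery diagram of Figure \ref{small} as the linking number in $Y$ of two parallel meridians of the $0$-framed unknot, tracked through the Rolfsen twists; this is more hands-on but less economical, since it does not reuse the already-established value of $[D]\cdot[D]$.
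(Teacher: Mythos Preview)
Your argument is correct and takes a genuinely different route from the paper's. The paper works directly in the surgery diagram of Figure~\ref{small}: the fiber-framed regular fiber appears there as a $0$-framed meridian of the $0$-framed unknot, and performing that surgery cancels the $0$-framed circle, leaving $L(a_1,b_1)\#\cdots\#L(a_n,b_n)$ with $|H_1| = a_1\cdots a_n$. This determines the fiber framing up to sign, and the sign is then fixed by a second computation (a $+1$-framed meridian yields $|H_1| = a_1\cdots a_n + 1$). You instead stay in the plumbed $4$-manifold $X_\Gamma$ and recycle the identity $[D]\cdot[D] = (Q^{-1})_{11} = -a_1\cdots a_n$ established just before Corollary~\ref{smtaucor}, reading off the framing from the fact that a parallel fiber disk is disjoint from $D$; the framing-change formula $D\cdot D' = [\hat D]^2 + lk(K,K')$ then gives $lk(K,K') = a_1\cdots a_n$ on the nose, with the sign already built in. Your route is more economical in context, since the hard computation has already been done; the paper's route is more elementary and self-contained, needing only Kirby calculus and orders of first homology. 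Amusingly, the ``alternative route'' you sketch in your last paragraph --- tracking things through the surgery diagram --- is essentially the argument the paper actually gives.
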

\begin{proof} It is elementary that if $K$ is a knot in an integer homology sphere $Y$, then the manifold obtained by $p$-framed surgery along $K$ (with respect to the Seifert framing) has first homology of order $|p|$. In the surgery diagram of Figure \ref{small}, the fiber-framed regular fiber is represented by a meridian of the circle labeled $0$, with framing 0 (in the diagram). Performing surgery on this meridian with fiber framing therefore cancels the $0$-framed circle and leaves a surgery diagram for a connected sum of lens spaces $L(a_1,b_1)\#\cdots\# L(a_n,b_n)$. The latter 3-manifold has first homology of order $a_1\cdots a_n$, which proves the lemma up to sign. That the fiber framing is positive can be seen, for example, by performing $+1$-framed surgery on the meridian (framing measured in the diagram), and noting that the resulting manifold has first homology of order $a_1\cdots a_n + 1$. For the latter calculation, one can use the fact that a 3-manifold described by the surgery diagram of Figure \ref{small}, with arbitrary $a_k$ and $b_k$, has first homology of order equal to the absolute value of  $a_1\cdots a_n\sum_k \frac{b_k}{a_k}$ (indeed, this quantity is easily seen to equal the absolute value of the determinant of a presentation matrix for the first homology; see \cite[Section 1.1.4]{savelievbook}, compare equation \eqref{seifeqn}). In this light, performing $+1$ surgery on an additional meridian amounts to replacing $(\frac{a_1}{b_1},\ldots, \frac{a_n}{b_n})$ with  $(\frac{a_1}{b_1},\ldots, \frac{a_n}{b_n}, 1)$, from which the result immediately follows. Note, of course, that this depends on our chosen orientation for $Y$.
\end{proof}

\begin{corollary}\label{cor:contacttau} Let $Y = \Sigma(a_1,\ldots, a_n)$, $\xi$ a contact structure on $Y$ with $\hat{c}(\xi) \neq 0$, and $K$ a knot smoothly isotopic to a regular fiber. Then
\[
2\tau_\xi(Y,K) \geq \tw(\xi) + a_1\cdots a_n  + 1.
\]
\end{corollary}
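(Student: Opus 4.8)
The plan is to combine Hedden's inequality (Theorem \ref{heddenthm}) with the framing computation of the preceding Lemma, and then to pass to a supremum over Legendrian representatives of the regular fiber. The crucial point is that the contact-tau invariant $\tau_\xi(Y,K)$ depends only on the smooth isotopy class of $K$ in $Y$, not on any Legendrian refinement; since all regular fibers are smoothly isotopic to one another, we have $\tau_\xi(Y,L) = \tau_\xi(Y,K)$ for every Legendrian $L$ that is smoothly isotopic to a regular fiber. Thus it suffices to bound $2\tau_\xi(Y,L)$ from below by $\tw(L,\phi) + a_1\cdots a_n + 1$ for each individual Legendrian representative $L$ (with chosen isotopy $\phi$), and then to take the supremum of the right-hand side over all $(L,\phi)$.

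The heart of the argument is a comparison of three framings on such an $L$: the contact framing (given by $\xi$), the Seifert framing (well-defined since $Y$ is a homology sphere and $L$ is nullhomologous), and the fiber framing induced by $\phi$. By definition $\tb(L)$ is the contact framing measured against the Seifert framing, while $\tw(L,\phi)$ is the contact framing measured against the fiber framing. The preceding Lemma records that the fiber framing, measured against the Seifert framing, equals $a_1\cdots a_n$. Adding these framing differences yields the identity
\[
\tb(L) = \tw(L,\phi) + a_1\cdots a_n,
\]
which is really the only place the specific geometry of $\Sigma(a_1,\ldots,a_n)$ (and the chosen orientation) enters.

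With this identity in hand the conclusion follows formally. Applying Theorem \ref{heddenthm} to $L$, which is legitimate because $\hat{c}(\xi)\neq 0$ by hypothesis, gives $\tb(L) + |\rot(L)| \leq 2\tau_\xi(Y,L) - 1$; discarding the nonnegative term $|\rot(L)|$ and substituting the framing identity produces $\tw(L,\phi) + a_1\cdots a_n \leq 2\tau_\xi(Y,K) - 1$, that is, $2\tau_\xi(Y,K) \geq \tw(L,\phi) + a_1\cdots a_n + 1$. Taking the supremum over all Legendrian $L$ smoothly isotopic to a regular fiber and all isotopies $\phi$ replaces $\tw(L,\phi)$ by $\tw(\xi)$ and yields the stated inequality. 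The main thing to get right is the sign and normalization in the framing identity $\tb(L) = \tw(L,\phi) + a_1\cdots a_n$; everything else is the direct insertion of Hedden's bound and the observation that $\tau_\xi$ is a smooth invariant, so it is unchanged as $L$ ranges over Legendrian realizations of the fiber.
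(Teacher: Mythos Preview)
Your proof is correct and follows essentially the same route as the paper's: both combine the framing identity $\tb(L) = \tw(L,\phi) + a_1\cdots a_n$ (coming from the preceding Lemma) with Hedden's inequality, discarding $|\rot(L)|$. The only cosmetic difference is that the paper chooses at the outset a Legendrian representative realizing $\tw(\xi)$ (legitimate here since $\hat{c}(\xi)\neq 0$ forces $\xi$ tight, so the integer-valued supremum is finite and attained), whereas you establish the inequality for each $(L,\phi)$ and then pass to the supremum; these are equivalent maneuvers.
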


\begin{proof} We may suppose that $K$ is chosen to be Legendrian with twist number equal to $\tw(\xi)$. Then this is Theorem \ref{heddenthm}, after discarding the nonnegative $|\rot(K)|$ and observing that for this choice of $K$, 
\begin{eqnarray*}
\tb(K) &=& \mbox{(contact framing)}-\mbox{(Seifert framing)} \\
&=& \mbox{(contact framing)}-\mbox{(fiber framing)} + \mbox{(fiber framing)}-\mbox{(Seifert framing)}\\
&=& \tw(K,\phi) + a_1\cdots a_n\\
&=& \tw(\xi) + a_1\cdots a_n.
\end{eqnarray*}
\end{proof}

Combining this with Corollary \ref{smtaucor} gives:

\begin{corollary}\label{taudiffcor} Suppose $Y = \Sigma(a_1,\ldots, a_n)$ is a Seifert integer homology sphere that bounds an integer homology ball (or assume just that the plumbed manifold $X_\Gamma$ has diagonalizable intersection form), and let $\xi$ be a contact structure on $Y$ with $\hat{c}(\xi) \neq 0$. Then there exists a knot $K\subset Y$ (isotopic to a regular fiber) such that 
\[
2(\tau_\xi(Y,K) - \tau_{sm}(Y,K)) \geq \tw(\xi) + \sqrt{a_1\cdots a_n} + 1.
\]
\end{corollary}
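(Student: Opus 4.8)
The plan is to combine the two preceding corollaries applied to a single knot and add the resulting inequalities; the cancellation of the $a_1\cdots a_n$ terms produces exactly the asserted bound. The key observation that makes this legitimate is that both $\tau_{sm}(Y,K)$ and $\tau_\xi(Y,K)$ are invariants of the smooth isotopy type of $K$ in $Y$ (the latter depending in addition on $\xi$ only through the fixed class $\hat{c}(\xi)$), so one and the same knot may be used in both estimates.

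Concretely, I would fix a knot $K \subset Y$ smoothly isotopic to a regular Seifert fiber. By the standing hypothesis---either that $Y$ bounds an integer homology ball, or, equivalently for our purposes, that $X_\Gamma$ has diagonalizable intersection form---Corollary \ref{smtaucor} applies to $K$ and gives $2\tau_{sm}(Y,K) \leq a_1\cdots a_n - \sqrt{a_1\cdots a_n}$. Since $\hat{c}(\xi) \neq 0$ by hypothesis, the invariant $\tau_\xi(Y,K)$ is defined and Corollary \ref{cor:contacttau} applies to the same $K$, yielding $2\tau_\xi(Y,K) \geq \tw(\xi) + a_1\cdots a_n + 1$. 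Subtracting the former inequality from the latter, the $a_1\cdots a_n$ terms cancel and one obtains
\[
2\bigl(\tau_\xi(Y,K) - \tau_{sm}(Y,K)\bigr) \geq \tw(\xi) + \sqrt{a_1\cdots a_n} + 1,
\]
which is the desired conclusion.

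There is no genuine obstacle in this argument: all of its content is already contained in Corollaries \ref{smtaucor} and \ref{cor:contacttau}, and what remains is arithmetic. The single point worth flagging is the admissibility of evaluating both bounds on the same representative. Corollary \ref{cor:contacttau} is established via a Legendrian representative realizing the maximal twisting number $\tw(\xi)$, but because its left-hand side $\tau_\xi(Y,K)$ depends only on the smooth isotopy class of $(Y,K)$ together with $\hat{c}(\xi)$, the resulting lower bound in fact holds for every knot isotopic to a regular fiber---in particular for the $K$ fixed above, to which Corollary \ref{smtaucor} simultaneously applies.
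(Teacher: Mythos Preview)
Your proof is correct and is essentially the same as the paper's: the corollary is stated immediately after Corollary~\ref{cor:contacttau} with the remark ``Combining this with Corollary~\ref{smtaucor} gives,'' and your argument spells out exactly that combination. Your explicit observation that both $\tau$-invariants depend only on the smooth isotopy class of $K$, so a single regular fiber may be used in both bounds, is a helpful clarification.
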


\begin{corollary}\label{ratboundcor} Let $Y = \Sigma(a_1,\ldots, a_n)$ be a Seifert integer homology sphere. If $\xi$ is a contact structure on $Y$ with $\tw(\xi) \geq -\sqrt{a_1\cdots a_n}$, then $(Y,\xi)$ does not admit a contact type embedding in $(\arr^4,\omega_{std})$.
\end{corollary}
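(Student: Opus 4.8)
The plan is to argue by contradiction, assembling the two opposing chains of inequalities established above. Suppose $(Y,\xi)$ does admit a contact type embedding in $(\arr^4,\omega_{std})$, so that $Y = \partial W$ for a compact domain $W\subset \arr^4$ carrying a Liouville field for $\omega_{std}$ near its boundary. Since $Y$ is an integer homology sphere, $H^1(Y;\arr) = 0$, and the Stokes' theorem argument recalled after Corollary \ref{reducedcor} shows the Liouville field points out of $W$; thus $W$ is symplectically convex and $\xi$ is strongly fillable. In particular $\hat{c}(\xi)\neq 0$, so the contact tau-invariant $\tau_\xi(Y,K)$ is defined and the nonvanishing hypothesis needed below is in force. (If instead $\hat{c}(\xi)=0$, then $\xi$ is not fillable and admits no contact type embedding, so the conclusion holds vacuously.)

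Next I would verify the topological hypothesis required to invoke Corollary \ref{taudiffcor}, namely that $Y$ bounds an integer homology ball. The smooth embedding $Y\hookrightarrow \arr^4\subset S^4$ separates $S^4$ into two compact pieces glued along $Y$; a standard Mayer--Vietoris computation, using that $Y$ is an integer homology sphere and that the connecting map $H_4(S^4)\to H_3(Y)$ is an isomorphism, shows that each piece has the integral homology of a ball. In particular the bounded component $W$ is an integer homology ball with $\partial W = Y$, which is exactly the hypothesis of Corollary \ref{taudiffcor} (and, via Donaldson's theorem as in the remark following the Proposition, forces the intersection form of $X_\Gamma$ to be diagonalizable).

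With both hypotheses secured, Corollary \ref{taudiffcor} produces a knot $K\subset Y$, isotopic to a regular Seifert fiber, satisfying
\[
2\bigl(\tau_\xi(Y,K) - \tau_{sm}(Y,K)\bigr) \geq \tw(\xi) + \sqrt{a_1\cdots a_n} + 1.
\]
On the other hand, enlarging $W$ inside a large ball $B_R^4$ and setting $Z = B_R^4\setminus \mathrm{int}(W)$ gives a strong symplectic cobordism from $(Y,\xi)$ to $(S^3,\xi_0)$, exactly as in the proof of Corollary \ref{reducedcor}. Corollary \ref{convexcor} then applies to this very $K$ and yields $\tau_\xi(Y,K)\leq \tau_{sm}(Y,K)$, so the left-hand side above is at most $0$. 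Combining the two inequalities gives $\tw(\xi) + \sqrt{a_1\cdots a_n} + 1 \leq 0$, i.e. $\tw(\xi) \leq -\sqrt{a_1\cdots a_n} - 1$, contradicting the standing hypothesis $\tw(\xi)\geq -\sqrt{a_1\cdots a_n}$. Hence no contact type embedding exists.

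Given the earlier results, the argument is essentially bookkeeping: the real content sits in Corollaries \ref{convexcor} and \ref{taudiffcor}, and once those are in place the two opposing bounds on $\tau_\xi - \tau_{sm}$ collide immediately. The only places where anything can go wrong — and which I regard as the main (if brief) obstacles — are the two consistency checks linking the smooth embedding to the symplectic and gauge-theoretic inputs: that the induced domain is genuinely symplectically convex, so that the \emph{strong} cobordism hypothesis of Corollary \ref{convexcor} is met (a merely pseudoconvex/weak filling would only yield a weak cobordism, for which Echeverria's naturality can fail), and that the embedding forces $Y$ to bound an integer homology ball, so that Donaldson diagonalizability and hence Corollary \ref{taudiffcor} are available.
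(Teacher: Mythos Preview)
Your proof is correct and follows essentially the same route as the paper's: assume a contact type embedding, deduce that the bounded region is a symplectically convex integer homology ball (so $\hat{c}(\xi)\neq 0$ and Corollary \ref{taudiffcor} applies), and then pit the lower bound from Corollary \ref{taudiffcor} against the upper bound from Corollary \ref{convexcor} to contradict the twisting hypothesis. Your version simply supplies more detail (the Mayer--Vietoris check and the explicit construction of the strong cobordism $Z$), but the logical skeleton is identical.
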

\begin{proof} By the remarks after Corollary \ref{reducedcor}, if $(Y,\xi)$ admits a contact type embedding then it is contactomorphic to the boundary of a symplectically convex domain in $\arr^4$ with the homology of a ball. Hence we may assume that $c^+(\xi) \neq 0$, and that $Y$ bounds a homology ball. Then the given inequality combined with the previous corollary shows that there is a knot $K$ in $Y$ with $\tau_\xi(Y,K) - \tau_{sm}(Y,K) > 0$. By Corollary \ref{convexcor}, $(Y,\xi)$ does not bound a symplectically convex domain.
\end{proof}

\section{Twisting numbers for Seifert homology spheres}\label{twistsec}

\bigskip

In light of Corollary \ref{ratboundcor}, our goal is now to derive a lower bound on the twisting number. To achieve this we apply certain techniques standard in the classification problem for contact structures on Seifert manifolds \cite{Honda:classification1,GS:classification, EH:nonexistence}.

Consider the Seifert homology sphere $Y=\Sigma(a_1, \ldots, a_n)$ as in Figure~\ref{small}, so that $(a_1, b_1), \ldots, (a_n, b_n)$ are the unnormalized Seifert invariants, satisfying \eqref{seifeqn} with $b = 0$. We first spell out topological conventions that are standard in this situation and are implicit in Figure \ref{small}.

Let $F_1, F_2, \ldots, F_n$ be the exceptional Seifert fibers of $Y$, and for $i = 1,\ldots, n$ let $V_{i}= D^2\times S^1$ denote a fixed solid torus. In particular we identify $\partial V_{i}$ with $\mathbb{R}^2/\mathbb{Z}^2$ such that $(1,0)^{T}$  corresponds to the meridian $\partial D^2 \times pt$, and $(0,1)^T$  the longitude $pt \times S^1$. On the other hand, let $S$ be the complement of $n$ disjoint open disks embedded in the 2-sphere: then each component of the boundary of $S\times S^1$ can then be identified with $\arr^2/\zee^2$ such that $(0,1)^{T}$ corresponds to $pt\times S^1$, and $(1,0)^{T}$ is identified with a component of $-\partial({pt.} \times S)$. We choose the orientation so that the standard orientation of $\arr^2/\zee^2$ is identified with the opposite of the induced orientation on $\partial (S\times S^1)$.

There is then a diffeomorphism
\begin{equation}\label{gluing}
Y = \Sigma(a_1,\ldots, a_n) \cong (S\times S^1)\cup_\partial (V_1\sqcup \cdots \sqcup V_n),
\end{equation}
where $V_i$ is attached to the $i$-th component $\partial_i(S\times S^1)$ by maps $A_{i}:\partial V_{i}\rightarrow -\partial_i(S\times S^1)$ given by 
\[
A_i=\left( \begin{array}{cc}
a_i&u_i\\ b_i&v_i
\end{array} \right).
\]
Here $u_i$, $v_i$ are integers that may be chosen arbitrarily so long as the resulting $A_i$ is an orientation-preserving homemorphism, the ambiguity in the choice reflecting the freedom in choosing the identification between a neighborhood of $F_i$ and the solid torus $V_i = D^2\times S^1$.  The identification \eqref{gluing} depends on this choice, and hence so does the numerical value of the twisting number (for Legendrian representatives of the exceptional fibers) defined below. To eliminate the ambiguity, we specify $u_i$, $v_i$ uniquely by the requirements $a_iv_i-b_iu_i=1$ and $0 < u_i<a_i$ for each $i$. Note that the case $u_i = 0$ does not arise since each $a_i \geq 2$.

Now suppose that $Y$ is equipped with a contact structure $\xi$. Then we can isotope each singular fiber $F_i$ to be Legendrian, and by stabilizing Legendrian representatives we may assume the contact framing of $F_i$ is arbitrarily small. The identification \eqref{gluing} provides a framing of $F_i$ when the latter is identified with the core of the solid torus $V_i$, and the contact framing of a Legendrian representative of $F_i$ compared to this framing is called the {\it twisting number} of the representative. Taking the twisting number to be $k_i<0$, we can then adjust \eqref{gluing} by an isotopy so that $V_{i}$  is a standard contact neighborhood of $F_i$, having convex boundary and dividing set $\Gamma_{\partial V_i}$ consisting of two simple closed curves of slope $\frac{1}{k_i}$. 

With our framing conventions above, slope $\frac{1}{k_i}$ on $\partial V_i$ corresponds to the vector $(k_i,1)^T$, so that each component of $\Gamma_{\partial V_i}$ is homotopic to a concatenation of one longitude and $k_i$ meridians. When measured in $-\partial_i(S\times S^1)$, that is after multiplying the matrices $A_i$ with the vectors  $(k_i,1)^T$, these slopes become

\begin{equation}\label{slopeform}
s_i=\frac{b_ik_i+v_i}{a_ik_i+u_i},~~ i=1, \ldots, n
\end{equation}
The denominator of $s_i$ is never zero by our choice of $u_i$, so that the dividing slope on $-\partial_i(S\times S^1)$ is not infinite. Hence, according to the flexibility theorem of Giroux, we can arrange by a small isotopy of the contact structure near $\partial V_i$ that the characteristic foliation on $-\partial_i(S\times S^1)$ is by parallel circles of infinite slope, which is to say that $-\partial_i(S\times S^1)$ has a Legendrian ruling by ``vertical'' circles isotopic to $pt\times S^1$.

We wish to apply the following ``twist number lemma'' of Honda \cite{Honda:classification1} to the Legendrian exceptional fibers $F_i$.

\begin{lemma}\label{tnl}
Consider a Legendrian curve $L$, and suppose that with respect to some framing the twisting number of $L$ is $k$. Let $V$ be a standard neighborhood of $L$, so that in the given framing $\partial V$ is convex with two dividing curves having slope $\frac{1}{k}$. If there exists a bypass $D$ which is attached to 
$\partial V$ along a Legendrian ruling curve of slope $r$, and $\frac{1}{r} \geq k + 1$, then there exists a Legendrian curve with twisting number $k+1$ smoothly isotopic to $L$.
\end{lemma}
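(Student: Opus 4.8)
The plan is to prove Honda's twist number lemma by the standard bypass-attachment argument from convex surface theory, tracking carefully how a bypass modifies the dividing set on the boundary of a standard neighborhood. First I would recall the mechanism of a bypass: given a convex surface $\Sigma$ and a bypass half-disk $D$ attached along a Legendrian arc in $\Sigma$ that intersects the dividing set $\Gamma_\Sigma$ in exactly three points, pushing $\Sigma$ across $D$ produces a new convex surface $\Sigma'$ whose dividing set is obtained from $\Gamma_\Sigma$ by Honda's bypass attachment rule. In our setting $\Sigma = \partial V$, a convex torus with two dividing curves of slope $\frac{1}{k}$, and $D$ is attached along a Legendrian ruling curve of slope $r$. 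The first key step is therefore to set up the \emph{Farey graph} (equivalently $\mathrm{SL}(2,\mathbb{Z})$) bookkeeping that governs how the dividing slope changes: when a bypass is attached to a convex torus with dividing slope $s_0$ along a ruling curve of slope $s_1$, the new dividing slope $s_0'$ is the slope in the interval $(s_0, s_1]$ (traversed in the appropriate direction on the Farey tessellation) that is closest to $s_0$ and connected to it by an edge.

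The second key step is to apply this rule to the case at hand. Here the dividing slope is $\frac{1}{k}$ and the attaching ruling curve has slope $r$ with $\frac{1}{r} \geq k+1$; I would normalize coordinates (by an element of $\mathrm{SL}(2,\mathbb{Z})$ sending $\frac{1}{k}$ to $\infty$ or $0$) so that the hypothesis $\frac{1}{r} \ge k+1$ becomes precisely the condition that the bypass is attached ``on the correct side'' to decrease the magnitude of the dividing slope by one step along the Farey graph, yielding a convex torus $\partial V'$ with dividing slope $\frac{1}{k+1}$. The point of the inequality $\frac{1}{r}\ge k+1$ is exactly to guarantee that the ruling slope lies in the arc of the Farey graph that forces the Farey-closest slope to $\frac{1}{k}$ on the far side to be $\frac{1}{k+1}$ rather than some intermediate or farther value; I would verify this with an explicit coordinate change and the edge-adjacency criterion in the Farey graph.

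The third step is to reinterpret the resulting torus as the boundary of a standard neighborhood. Once $\partial V'$ is a convex torus with two dividing curves of slope $\frac{1}{k+1}$, the Legendrian core $L'$ of the solid torus it bounds is a Legendrian curve smoothly isotopic to $L$ (the isotopy being through the product region swept out in the bypass attachment, which is contained in a solid torus neighborhood of $L$), and by the definition of twisting number its twisting number with respect to the given framing is $k+1$. This produces the desired Legendrian representative and completes the proof. I would cite Honda's \cite{Honda:classification1} classification of tight contact structures on solid tori to identify the standard neighborhood from its boundary dividing slope, and Giroux's flexibility (already invoked in the surrounding text) to realize the ruling curves.

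The main obstacle I expect is bookkeeping the orientation and sign conventions so that the Farey-graph adjacency computation genuinely yields $\frac{1}{k+1}$ and not $\frac{1}{k-1}$: the direction in which a bypass shifts the dividing slope depends on which side of $\partial V$ the bypass is attached and on the chosen orientations, and the hypothesis $\frac{1}{r}\ge k+1$ must be matched exactly to the correct traversal direction on the tessellation. Getting these conventions to line up with the framing conventions fixed earlier (the matrices $A_i$ and the normalization $0<u_i<a_i$) is the delicate part; the topological content of the bypass move itself is standard.
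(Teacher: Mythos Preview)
The paper does not prove this lemma at all: it is simply quoted, with attribution, from Honda \cite{Honda:classification1} as a tool to be applied in the subsequent argument. So there is no ``paper's own proof'' to compare against.

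That said, your outline is correct and is essentially Honda's original argument. The bypass attachment lemma on a convex torus, phrased via the Farey tessellation, is exactly the mechanism, and the hypothesis $\frac{1}{r}\ge k+1$ is precisely what forces the new dividing slope to be $\frac{1}{k+1}$. One small point worth tightening in your third step: after attaching the bypass you obtain a \emph{larger} solid torus $V'\supset V$ whose boundary has two dividing curves of slope $\frac{1}{k+1}$, and to conclude that $V'$ is a standard neighborhood of a Legendrian with twisting $k+1$ you really do need tightness of the ambient contact structure (so that Honda's classification of tight contact structures on solid tori applies). The lemma as stated in the paper omits this hypothesis, but it is implicit throughout Section~\ref{twistsec}, where all contact structures under consideration are tight. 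Your concern about orientation and side-of-attachment conventions is well placed but routine once one fixes the convention that the bypass lies outside $V$; this is what makes the slope move toward $\frac{1}{k+1}$ rather than $\frac{1}{k-1}$.
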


In our situation the ruling curves have infinite slope on $-\partial_i(S\times S^1)$, corresponding to the vector $(0,1)^T$. Passing to $\partial V_i$ using $A_i^{-1}$, the ruling slope is then given by the vector $(-u_i, a_i)$, i.e. $r_i = -\frac{a_i}{u_i}$. Since $\frac{1}{r_i} = -\frac{u_i}{a_i} \in (-1,0)$, we see that as long as suitable bypasses can be found, we can replace $F_i$ by smoothly isotopic Legendrians having sequentially larger twist numbers, implicitly adjusting the neighborhoods to be convex with infinite ruling slope as before, until the twist number $k_i$ reaches $-1$. 
 
 \begin{lemma}[Congruence Principle] Let $\xi$ be a tight contact structure on a Seifert manifold $Y$ as in \eqref{gluing}, and suppose there exists a Legendrian regular fiber in $(Y, \xi)$ having twist number $t<0$. Then for any pair $i,j$, there exist Legendrian representatives of the exceptional fibers $F_i$, $F_j$ with twist numbers $k_i,k_j <0$, such that either:
 \begin{enumerate}
 \item One of $k_i$ or $k_j$ is equal to $-1$, or
 \item There is an integer $d$ with $t\leq d < 0$ such that $d = u_i$ modulo $a_i$ and $d = u_j$ modulo $a_j$, and there is a Legendrian regular fiber in $(Y,\xi)$ having twist number $d$. Moreover, in this case 
 \[
 d = a_ik_i + u_i = a_j k_j + u_j.
 \]
 \end{enumerate}
 \end{lemma}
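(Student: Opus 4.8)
The plan is to use the bypass machinery set up in the preceding discussion to ``balance'' the twist numbers of two exceptional fibers against a Legendrian regular fiber. First I would fix a Legendrian regular fiber $R$ with twist number $t<0$; since a regular fiber is isotopic to a vertical curve $pt\times S^1$ in $S\times S^1$, a standard neighborhood of $R$ can be cut out so that the remaining piece is (topologically) $(S'\times S^1)\cup(V_1\sqcup\cdots\sqcup V_n)$, where $S'$ is $S$ with one more boundary circle. The key point is that, having arranged each $\partial V_i$ to be convex with ruling slope infinite (as explained before the statement of Honda's Lemma \ref{tnl}), the dividing slope of the regular fiber measured in this decomposition is governed by the same formula \eqref{slopeform}; that is, the twist number $d$ of a Legendrian regular fiber and the twist numbers $k_i$ of the exceptional fibers are linked through the affine relation $d=a_ik_i+u_i$ that records how the vertical slope in $-\partial_i(S\times S^1)$ pulls back to $\partial V_i$ via $A_i^{-1}$. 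This immediately yields the two congruence conditions $d\equiv u_i \pmod{a_i}$ and $d\equiv u_j\pmod{a_j}$ in case (2), so the arithmetic content of the conclusion falls out of the gluing conventions once the geometry is in place.

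Next I would run the bypass-increasing procedure described just before Lemma \ref{tnl}. The point there is that the ruling slope on $\partial V_i$ has $\frac{1}{r_i}=-\frac{u_i}{a_i}\in(-1,0)$, so whenever a suitable bypass can be found, Honda's twist number lemma lets us raise $k_i$ by one while keeping $F_i$ smoothly isotopic to an exceptional fiber and re-standardizing its neighborhood. I would increase the twist numbers of both $F_i$ and $F_j$ as far as this process allows. Either the process drives one of $k_i,k_j$ all the way up to $-1$, which is alternative (1); or the process must halt before that, meaning no further bypass of the required slope exists along one of the boundary tori. In the latter case, the only way the procedure can stall is that a bypass gets ``absorbed'' into, or is obstructed by, the dividing data coming from the regular fiber $R$, and the failure of the slope inequality $\frac{1}{r}\geq k+1$ in Lemma \ref{tnl} translates precisely into the statement that the regular-fiber twist number has reached the common value $d=a_ik_i+u_i=a_jk_j+u_j$. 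Tracking the convex-surface bookkeeping (edge-rounding along the pieces glued together, and the fact that the dividing set on the thrice-punctured $S'\times S^1$ region must be consistent along all boundary components simultaneously) is what forces the \emph{same} integer $d$ to appear for the two indices $i$ and $j$, and the inequality $t\leq d<0$ follows since $d$ cannot drop below the twist number $t$ of an already-present Legendrian regular fiber.

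The main obstacle, and the step requiring the most care, is establishing that when the increasing procedure halts the resulting obstruction is genuinely a Legendrian regular fiber of a definite twist number $d$ satisfying \emph{both} congruences at once, rather than two unrelated stopping values for $i$ and $j$. This is where the tightness hypothesis is essential: I would invoke Giroux flexibility and the classification of tight contact structures on $S'\times S^1$ (the thin part) to argue that the vertical dividing slope is a global feature of the background region, shared by every boundary component, so that the arithmetic relation $d=a_ik_i+u_i$ holding for one index propagates to all indices through the common vertical ruling. Concretely, the delicate part is the convex-surface edge-rounding computation that shows the candidate bypass, if it fails to exist along $\partial V_i$, simultaneously certifies a Legendrian regular fiber of twist number $d$ in the interior; this is the analogue of the standard ``dividing-slope transfer'' arguments in \cite{Honda:classification1,EH:nonexistence}, and carrying it out while respecting the orientation conventions fixed in \eqref{gluing} is where the real work lies.
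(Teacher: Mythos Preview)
Your proposal has a genuine gap: you never identify where the bypasses come from, and you misidentify what makes the increasing procedure stop.  The paper's argument hinges on a single concrete object, a convex vertical annulus $\mathcal{A}_{i,j}\subset S\times S^1$ with one boundary circle on each of $-\partial_i(S\times S^1)$ and $-\partial_j(S\times S^1)$, chosen to contain a vertical curve isotopic to the given Legendrian regular fiber of twist number $t$.  The number of dividing-curve endpoints on the $i$-th boundary is $2|a_ik_i+u_i|$ (this is the geometric intersection of a slope-$s_i$ curve with an infinite-slope ruling curve), and at most $|2t|$ dividing arcs can cross the annulus.  Hence whenever $|a_ik_i+u_i|>|t|$, or more generally whenever $|a_ik_i+u_i|\neq |a_jk_j+u_j|$, there is a boundary-parallel dividing arc and thus a bypass on the larger side: this is Honda's Imbalance Principle, and it is the entire source of bypasses.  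The procedure halts exactly when no boundary-parallel arc exists, which forces $a_ik_i+u_i=a_jk_j+u_j=:d$; the ruling curve on either boundary torus is then a Legendrian regular fiber with twist number $d$, and $t\le d<0$ because of the arc count on $\mathcal{A}_{i,j}$.

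By contrast, your write-up never introduces this annulus.  You say the process stalls because ``the failure of the slope inequality $\frac{1}{r}\ge k+1$'' occurs, but that inequality holds for every $k\le -1$ since $\frac{1}{r_i}=-\frac{u_i}{a_i}\in(-1,0)$; the slope condition in Lemma~\ref{tnl} is never the obstruction.  What can fail is the \emph{existence} of a bypass, and your proposal gives no mechanism for producing one.  The appeal to a classification of tight structures on $S'\times S^1$ and to edge-rounding is both unnecessary here (edge-rounding enters only later, in Proposition~\ref{mainprop}) and insufficient: the equality of the two values $a_ik_i+u_i$ and $a_jk_j+u_j$ is not a ``global vertical slope'' phenomenon but a direct combinatorial consequence of the dividing set on the single annulus $\mathcal{A}_{i,j}$.
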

 
\begin{proof} Consider a vertical annulus $\A_{i,j}$ between $-\partial_i(S\times S^1)$ and $-\partial_j(S\times S^1)$, which is to say that $\A_{i,j}$ is isotopic to an annulus of the form $a\times S^1$ for an arc $a$ between boundary components $i$ and $j$ of $S$. In particular, we take the boundary circles $\partial_i\A_{i,j}$ and $\partial_j\A_{i,j}$ of $\A_{i,j}$ to be Legendrian ruling curves on the boundary tori. Since the twisting numbers of the boundary circles are negative, we can make $\A_{i,j}$ convex by a small isotopy. In more detail, recall that the twisting number $\tw(L, \Sigma)$ of a Legendrian $L$ lying on a surface $\Sigma$ is equal to the difference between the framing on $L$ induced by the contact structure and that induced by $\Sigma$ (equivalently, the framing induced by a vector field along $L$ normal to the contact structure or the surface, respectively). If a compact orientable surface has Legendrian boundary where each boundary circle has nonpositive twisting number, then \cite[Proposition 3.1]{Honda:classification1} shows that the surface can be made convex by a small perturbation fixing the boundary. In the case at hand the framing induced by the annulus is the same as that induced by the boundary torus, so that $\tw(\partial_i\A_{i,j}, \A_{i,j}) = \tw(\partial_i\A_{i,j}, -\partial_i(S\times S^1))$. Observe that  $-\partial_i(S\times S^1)$ is already convex, and recall that on a convex surface the twisting number of a Legendrian $L$ in general position is given by $-\frac{1}{2}$ times the number of geometric intersections of $L$ with the dividing set. This already shows the twisting numbers of the boundary circles of $\A_{i,j}$ are nonpositive, so $\A_{i,j}$ can be made convex. We can also suppose that $\A_{i,j}$ is chosen to contain a curve $c$ isotopic to $pt\times S^1$ having a Legendrian realization isotopic to the regular fiber of twist number $t$ as in the statement.

To understand the twisting numbers of the boundary circles of $\A_{i,j}$ concretely, recall that in our case the dividing set on one boundary torus consists of two parallel circles, and the Legendrian ruling curves intersect them minimally.  Moreover, a dividing curve on $-\partial_i(S\times S^1)$ has slope $s_i$. The algebraic intersection number between a curve of slope $s_i$ and the infinite slope ruling curve is given by $a_ik_i + u_i$, which is negative and thus equal to the twisting number, and similar for $s_j$. 

Now, the dividing set on $\A_{i,j}$ is a disjoint union of properly embedded arcs (there are no closed components since $\xi$ is tight, using Giroux's criterion \cite[Theorem 3.5]{Honda:classification1}), whose endpoints on the boundary alternate with points of intersection between the boundary and the dividing set on $-\partial_{i}(S\times S^1)$ (or $-\partial_j(S\times S^1)$). Moreover, since $\A_{i,j}$ contains a vertical arc with twist number $t$, we can assume that at most $|2t|$ dividing curves of $\A_{i,j}$ connect different boundary components of the annulus. Hence if either $|a_ik_i + u_i|$ or $|a_jk_j + u_j|$ is greater than $|t|$, there exists a dividing arc on $\A_{i,j}$ that connects two points on the same boundary component, and an innermost such arc determines a bypass for the corresponding torus. (This is essentially Honda's ``Imbalance Principle.'') Attaching these bypasses sequentially and applying the twist number lemma, we can suppose that either $k_i$ can be increased to $-1$ or that $k_i$ satisfies $t \le a_ik_i + u_i < 0$, and similarly for $j$.

Now, if at this point we have $a_ik_i + u_i \neq a_jk_j + u_j$, then by the imbalance principle there exists a bypass on one side or the other of the annulus. Thus, as long as $a_ik_i + u_i \neq a_jk_j +u_j$, and $k_i, k_j < -1$, we can add bypasses to either $\partial V_i$ or $\partial V_j$ and thereby assume that the new solid tori are standard neighborhoods of Legendrian exceptional fibers of increasing twist number. 

By continuing this process, either one of $k_i$, $k_j$ becomes $-1$, or we reach a point at which $a_ik_i + u_i = a_jk_j + u_j$, which in particular says that the number $d = a_ik_i + u_i$ satisfies the two congruences $d = u_i\mod a_i$ and $d = u_j \mod a_j$. In the latter case, a ruling curve on either $\partial_i(S\times S^1)$ or $\partial_j(S\times S^1)$ is isotopic to a regular fiber, and is a Legendrian with twist number $d$ as desired.


\end{proof}

Observe that in the situation at the end of the preceding proof, it may happen that the intersection numbers $a_ik_i + u_i$ and $a_j k_j + u_j$ are equal, yet there is still a dividing arc on $\A_{i,j}$ with endpoints on the same boundary component. In this case there must actually be bypasses on both sides of $\A_{i,j}$, and hence both $k_i$ and $k_j$ can be increased. The process then resumes until one of $k_i$, $k_j$ reaches $-1$, or the congruence of part (2) of the Lemma is again realized, and there are no further bypasses on $\A_{i,j}$.

\begin{definition} Let $Y$ be a Seifert manifold with singular fibers $F_1,\ldots, F_n$, and $I\subset \{1,\ldots, n\}$. A collection of Legendrian representatives for $\{F_i\}_{i\in I}$ is {\em twist-balanced}, or simply {\it balanced}, if for each $i,j\in I$ and any convex vertical annulus $\A_{i,j}$ connecting standard neighborhoods of $F_i$ and $F_j$ avoiding the singular fibers, there are no boundary-parallel dividing arcs on either side of $\A_{i,j}$. 
\end{definition}

Applying the congruence principle repeatedly yields:

\begin{corollary}\label{balancecor} For any subset $\{F_i\}_{i\in I}$ of singular fibers in the Seifert manifold $Y$, either there exists $i \in I$ and a Legendrian representative of  $F_i$ having twist number $-1$, or $\{F_i\}_{i\in I}$ admit twist-balanced representatives.

In the twist-balanced case, the balanced representatives of $F_i$ have twist numbers $k_i$ satisfying 
\[
a_i k_i + u_i = a_j k_j + u_j
\]
for all $i, j\in I$. If there is a Legendrian regular fiber in $Y$ having twist number $t<0$, then the common value $d = a_ik_i + u_i$ can be taken to satisfy $t\leq d < 0$.
\end{corollary}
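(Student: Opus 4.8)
The plan is to obtain the balanced configuration by applying the Congruence Principle not to a single pair but simultaneously across all pairs $i,j\in I$, organizing the whole process by a monotonicity argument. First I would isotope each $F_i$ ($i\in I$) to a Legendrian with negative twist number $k_i$ sitting inside a standard convex solid-torus neighborhood $V_i$, exactly as in the setup preceding Lemma \ref{tnl}. For any pair $i,j$ I choose a convex vertical annulus $\A_{i,j}$ joining $\partial V_i$ to $\partial V_j$ and avoiding the other singular fibers. Whenever some $\A_{i,j}$ carries a boundary-parallel dividing arc, I attach the resulting bypass and invoke the Twist Number Lemma (Lemma \ref{tnl}) to replace the corresponding fiber by a smoothly isotopic Legendrian whose twist number is one larger, re-choosing standard neighborhoods and convex annuli afterward. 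By the Imbalance Principle used in the proof of the Congruence Principle, such a boundary-parallel arc is guaranteed whenever $a_ik_i+u_i\neq a_jk_j+u_j$, and, as the remark following that proof notes, may persist on both sides even when these two intersection numbers already agree; in either case a bypass is available and a modification can be made.

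The heart of the argument is termination. Each bypass attachment strictly increases exactly one $k_i$ by $1$, while the Twist Number Lemma applies only when $k_i\le -2$, since here the relevant ruling slope satisfies $\tfrac{1}{r_i}=-\tfrac{u_i}{a_i}\in(-1,0)$, so the hypothesis $\tfrac{1}{r_i}\ge k_i+1$ of Lemma \ref{tnl} fails once $k_i=-1$. Thus every $k_i$ stays bounded above by $-1$, and the integer $\sum_{i\in I}k_i$ strictly increases at each step while remaining bounded above by $-|I|$; hence the process halts after finitely many steps. It halts for one of exactly two reasons: either some $F_i$ has acquired a Legendrian representative of twist number $-1$, which is the first alternative of the statement, or no annulus $\A_{i,j}$ carries a boundary-parallel dividing arc, which is precisely the definition of twist-balanced. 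In the latter case the contrapositive of the Imbalance Principle forces $a_ik_i+u_i=a_jk_j+u_j$ for every pair $i,j\in I$, so all of these quantities share a common value $d$ (the simultaneous solvability of the congruences $d\equiv u_i\pmod{a_i}$ is consistent with the pairwise coprimality of the $a_i$ via the Chinese Remainder Theorem).

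For the final clause, I would arrange each annulus $\A_{i,j}$ to contain a vertical arc realized as the Legendrian regular fiber of twist number $t<0$, exactly as in the pairwise proof. This caps the number of dividing arcs of $\A_{i,j}$ running between its two boundary circles at $|2t|$, which yields the lower bound $d=a_ik_i+u_i\ge t$; the upper bound $d<0$ follows from $k_i<0$ together with $0<u_i<a_i$. Hence $t\le d<0$, as claimed.

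The step I expect to be the main obstacle is the bookkeeping behind the global iteration: balancing the pair $(i,j)$ by raising $k_i$ changes the intersection number $a_ik_i+u_i$ and can therefore destroy a previously achieved balance with a third fiber $F_\ell$, so the pairs genuinely cannot be treated independently. This interdependence is exactly why a naive pairwise application of the Congruence Principle does not suffice, and why the monotone quantity $\sum_{i\in I}k_i$ is needed to guarantee that the simultaneous process still terminates. A secondary technical point, which I would handle as in \cite{Honda:classification1}, is that after each bypass attachment one must re-verify that the new solid-torus neighborhoods are standard and that fresh convex vertical annuli may be chosen, so that the Imbalance Principle and Lemma \ref{tnl} continue to apply at every stage.
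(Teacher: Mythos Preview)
Your proposal is correct and follows the same approach as the paper, which proves the corollary in a single sentence: ``Applying the congruence principle repeatedly yields'' the statement. You have spelled out precisely what that repeated application entails, including the termination argument via the monotone quantity $\sum_{i\in I} k_i$ and the bookkeeping about interdependence between pairs, both of which the paper leaves to the reader.
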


\begin{proposition}\label{mainprop} Let $Y = \Sigma(a_1,\ldots, a_n)$ be a standardly-oriented Brieskorn homology sphere with a given tight contact structure, let $I = \{1,\ldots, n-1\}$, and assume that $\{F_i\}_{i\in I}$ admit twist-balanced Legendrian representatives. 
Then there exists a Legendrian regular fiber in $Y$ having twist number $-a_1\cdots a_{n-1}$. 
\end{proposition}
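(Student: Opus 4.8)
The plan is to extract both conclusions from the balanced configuration supplied by Corollary~\ref{balancecor}, using the Chinese Remainder Theorem to pin down the common twisting value and Honda's imbalance/twist--number machinery to drive an exceptional fiber up to twisting $-1$.

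First I would apply Corollary~\ref{balancecor} to the balanced set $I=\{1,\ldots,n-1\}$. The balanced representatives carry twist numbers $k_i<0$ with a common value $d=a_ik_i+u_i<0$, and there is a Legendrian regular fiber of twist number $d$. The resulting congruences $d\equiv u_i\pmod{a_i}$ for $i\in I$, together with the pairwise coprimality of the $a_i$, determine $d$ modulo $a_1\cdots a_{n-1}$; since $0<u_i<a_i$ forces $d\not\equiv 0\pmod{a_i}$, no representative of this class is divisible by $a_1\cdots a_{n-1}$. I would then check that the balancing process, which increases twist numbers until no bypass remains, realizes the \emph{maximal} such $d$, namely the unique representative lying in the window $(-a_1\cdots a_{n-1},0)$, so that in particular $-a_1\cdots a_{n-1}<d<0$. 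Stabilizing the twist--$d$ regular fiber produces Legendrian regular fibers of every twist number $\le d$, and in particular one of twist number exactly $-a_1\cdots a_{n-1}$; this gives the first assertion.

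Next, for the statement about the exceptional fibers I would bring in $F_n$ and analyze a convex vertical annulus $\A$ joining a standard neighborhood of the maximally twisted regular fiber (twist $d$) to $\partial V_n$. Applying the Imbalance Principle to $\A$ and the twist number lemma (Lemma~\ref{tnl}), boundary--parallel dividing arcs, and hence bypasses, appear on whichever side of $\A$ meets the dividing set more often, i.e.\ the side of larger $|d|$ versus $|a_nk_n+u_n|$. A bypass on the fiber side would, via the twist number lemma, raise the twisting of the regular fiber above its maximal value, which is impossible for a tight structure; hence every usable bypass acts on the $F_n$ side and raises $k_n$ toward $0$. Because the ruling slope of $F_n$ is $r_n=-a_n/u_n$ with $\tfrac1{r_n}=-u_n/a_n\in(-1,0)$, the lemma applies exactly while $k_n\le -2$ and stalls at $k_n=-1$; thus the process terminates either with a twist--$(-1)$ representative of $F_n$ (the desired conclusion) or with the annulus balanced, which forces $a_nk_n+u_n=d$ and in particular $d\equiv u_n\pmod{a_n}$ --- a congruence that, by the construction of $d$ from the subset $I$, need not hold.

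The main obstacle is precisely the borderline case in which this last congruence \emph{is} satisfied, so that all $n$ fibers can balance simultaneously at a common value $d'=a_ik_i+u_i$ with possibly every $k_i\le -2$. Here the convex--surface manipulations give out and I would argue directly with the Seifert data. Using $a_iv_i-b_iu_i=1$ with $0<u_i<a_i$ one rewrites the congruences as $d'\equiv -\,(a_1\cdots a_n)/a_i\pmod{a_i}$ for each $i$, which by the Chinese Remainder Theorem pins $d'$ modulo $a_1\cdots a_n$; combining this with the normalization $e_0\in\{-1,\ldots,-(n-1)\}$ and the relation \eqref{seifeqn} coming from the negative--definite plumbing (the standard orientation), I expect to show that an all--balanced configuration with every $k_i\le -2$ is incompatible with tightness, so that some $F_i$ must after all admit a twist--$(-1)$ representative. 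This number--theoretic exclusion, rather than the bypass bookkeeping, is where I anticipate the real difficulty, and it is also the only step in which the standard orientation of the Brieskorn sphere is genuinely used.
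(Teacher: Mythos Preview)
Your approach differs substantially from the paper's and contains a gap I do not see how to close. The paper uses neither the Chinese Remainder Theorem nor stabilization. Instead it performs a cut-and-round construction: a regular neighborhood of $V_1\cup\A_{1,2}\cup V_2\cup\cdots\cup V_{n-1}$, with corners smoothed via Honda's edge-rounding lemma, has convex boundary $T_{CR}$ parallel to $-\partial V_n$ with explicitly computable slope
\[
s(T_{CR}) \;=\; \sum_{i<n}\frac{b_i}{a_i} \;+\; \frac{1}{d}\Bigl(\sum_{i<n}\frac{1}{a_i}-n+2\Bigr) \;\ge\; \sum_{i<n}\frac{b_i}{a_i},
\]
the inequality holding for \emph{every} $d<0$ since the parenthesized term is negative. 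Relation~\eqref{seifeqn} (this is where the orientation enters) then gives $\sum_{i<n} b_i/a_i \ge -s_n$ for all $k_n\le -1$. The tight $T^2\times I$ between $T_{CR}$ and $-\partial V_n$ therefore contains, by Honda's classification, convex tori of every intermediate slope: the one of slope $\sum_{i<n} b_i/a_i$ (denominator $a_1\cdots a_{n-1}$ in lowest terms) carries a vertical Legendrian ruling of twist exactly $-a_1\cdots a_{n-1}$, and the one of slope $-s_n|_{k_n=-1}$ bounds a solid torus which, by the classification on solid tori, is a standard neighborhood of a twist-$(-1)$ representative of $F_n$. Both conclusions fall out at once, with no case analysis and no constraint on $d$ beyond $d<0$.

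Your argument, by contrast, needs the balanced value $d$ to lie in $(-a_1\cdots a_{n-1},0)$, and later needs the twist-$d$ regular fiber to be a \emph{maximally} twisted regular fiber so that no bypass can appear on its side of $\A$. Neither is part of the hypothesis: the proposition assumes only that \emph{some} balanced configuration exists, and the balanced condition forces the quantities $|a_ik_i+u_i|$ to agree but does not force them to be minimal. If the given balanced representatives happen to have $d<-a_1\cdots a_{n-1}$, stabilization goes the wrong way and the first conclusion fails; and even if $d$ lands in the top window, there is no reason $d=\tw(\xi)$, so a bypass on the fiber side of your annulus $\A$ is not excluded. The number-theoretic endgame you flag would then have to carry the whole weight of the second conclusion, but you propose no mechanism for it, and I do not see one that does not in the end reduce to the slope comparison in the thickened torus that the paper actually performs.
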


From Corollary \ref{balancecor}, in the situation of the Proposition we also have at least one of the following:
\begin{enumerate}
\item[(a)] The fiber $F_n$ admits a Legendrian representative of twist number $-1$, or
\item[(b)] Some $F_i$, $1\leq i \leq n-1$, admits a representative of twist number $-1$, or
\item[(c)] The collection of all singular fibers $\{F_i\}_{i=1}^n$ admit twist-balanced representatives, with common value $d \geq -a_1\cdots a_{n-1}$.
\end{enumerate}

\begin{proof} For later use, we consider a collection of $j\geq 2$ twist-balanced fibers $\{F_1, \ldots, F_j\}$ (for the Proposition, we will take $j = n-1$). The twist-balanced condition means that for appropriate Legendrian representatives of $F_1,\ldots, F_{j}$, their twist numbers $k_i$ satisfy $a_1k_1 + u_1 = \cdots = a_{j}k_{j}  + u_{j} =: d$. Moreover, we can find a pairwise disjoint collection of convex vertical annuli $\A_{i, i+1}$, $i = 1,\ldots, j-1$, so that $\A_{i, i+1}$ connects neighborhoods of $F_i$ and $F_{i+1}$, and contains no boundary-parallel dividing curves. Writing $V_i$ for the standard neighborhood of $F_i$, the boundary of a regular neighborhood of the union of $V_1,\ldots, V_{j}$ with these annuli becomes, after smoothing corners, a convex torus $T_{CR}$. Here ``CR'' stands for ``cut-and-round,'' which is a standard term for this procedure.

The slope of the dividing curves on $T_{CR}$ can be calculated with the help of the ``edge-rounding lemma'' of Honda \cite[Lemma 3.11]{Honda:classification1}: writing $s_i$ as in \eqref{slopeform} for the slope of $-\partial_j(S\times S^1)$, the slope of $T_{CR}$ is given by
\[
s(T_{CR}) = s_1 + \cdots + s_{j} - \ts\frac{j-1}{d},
\]
where the last term arises since each rounding of a corner contributes a slope of $-\frac{1}{4d}$ and there are $4(j-1)$ corners to round (for more on this procedure and the calculation of the resulting slope, we refer to \cite[Lemma 4.5]{GS:classification} or \cite[Lemma 7]{EH:nonexistence}). We calculate:
\begin{eqnarray*}
s(T_{CR}) &=& \ts\frac{1}{d}( b_1k_1 + v_1 + \cdots + b_{j}k_{j} + v_{j} - j+1) \\
&=& \ts\frac{1}{d}\left(\frac{b_1}{a_1}(d-u_1) + \cdots + \frac{b_{j}}{a_{j}}(d - u_{j}) + v_1 + \cdots + v_{j} - j +1\right) \\ 
&=& \ts\frac{b_1}{a_1} + \cdots + \frac{b_{j}}{a_{j}}  + \frac{1}{d}\left( \frac{1}{a_1} + \cdots + \frac{1}{a_{j}} - j+1\right)
\end{eqnarray*}
where we have used that $a_iv_i - b_iu_i = 1$. Since $j\geq 2$, and the $a_i$ are relatively prime and at least 2, it is easy to see that in the last line the term in parentheses is strictly negative. Since each $k_i \leq -1$, $d = a_i k_i + u_i$ is also negative and we infer
\[
s(T_{CR}) > \frac{b_1}{a_1} + \cdots + \frac{b_{j}}{a_{j}} .
\]

Now we take $j = n-1$. Using \eqref{seifeqn} with $b = 0$ we have that 
\begin{equation}\label{slopeineq}
\frac{b_1}{a_1} + \cdots + \frac{b_{n-1}}{a_{n-1}} = \frac{1}{a_1\cdots a_n}-\frac{b_n}{a_n} > - \frac{b_nk_n + v_n}{a_nk_n + u_n} = -s_n,
\end{equation}
where it is easy to check that the inequality holds for any sufficiently negative $k_n$. This last fraction is just the slope of the boundary of a standard neighborhood of a Legendrian representative of $F_n$ having twist number $k_n$, measured in the coordinates on $\partial_n(S \times S^1)$, with a change in sign to account for the orientation change. 

It follows that the region between $T_{CR}$ and $\partial_n(S\times S^1)$, being diffeomorphic to $T_{CR}\times [0,1]$ with  convex boundaries having slopes $s(T_{CR})$ and $-s_n$, contains a boundary-parallel convex torus $T'$, having slope $\frac{b_1}{a_1} + \cdots + \frac{b_{n-1}}{a_{n-1}}$. A dividing curve on this torus intersects the vertical ruling curve (a Legendrian regular fiber) $a_1\cdots a_{n-1}$ times, hence this vertical Legendrian has twist number $-a_1\cdots a_{n-1}$.
\end{proof}

\begin{theorem}\label{twistthm} Let $Y= \Sigma(a_1,\ldots, a_n)$ be a standardly-oriented Brieskorn homology sphere, with contact structure $\xi$. Let $F_1,\ldots, F_n$ be the multiple fibers in the Seifert structure, having multiplicities $a_1,\ldots, a_n$. Then the twist number of $\xi$ satisfies
\[
\tw(\xi) \geq -\sqrt{a_1\cdots a_n}.
\]
\end{theorem}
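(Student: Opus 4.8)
The plan is to produce, for the given (tight) contact structure $\xi$, an honest Legendrian knot smoothly isotopic to a regular Seifert fiber whose twisting number against the fiber framing is strictly larger than $-\sqrt{a_1\cdots a_n}$; since $\tw(\xi)$ is by definition the supremum of such twisting numbers, this is exactly the inequality claimed. (I would assume $\xi$ tight, as the tools below require; this is the only case relevant to Corollary~\ref{ratboundcor}, where $\xi$ is fillable.) Set $P=a_1\cdots a_n$ and relabel the multiple fibers so that $a_n=\max_i a_i$. The two ingredients are already in hand: Corollary~\ref{balancecor}, giving for each subset $I$ the alternative that either some $F_i$ with $i\in I$ has a Legendrian representative of twist number $-1$, or the $\{F_i\}_{i\in I}$ admit twist-balanced representatives; and Proposition~\ref{mainprop}, which upgrades the balanced case for $I=\{1,\dots,n-1\}$ to the existence of a Legendrian regular fiber of twist number $-\prod_{i<n}a_i$, together with a twist-$(-1)$ representative of at least one fiber.

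The elementary but crucial observation is the one already recorded inside the proof of the Congruence Principle: a standard neighbourhood of an exceptional fiber $F_i$ carrying twist number $k_i=-1$ has a vertical Legendrian ruling curve, isotopic to a regular fiber, whose twist number is the intersection number $a_ik_i+u_i=u_i-a_i\in(-a_i,0)$. Thus a twist-$(-1)$ representative of $F_i$ immediately certifies $\tw(\xi)>-a_i$. With this in place I would split on the size of $a_n$ relative to $\sqrt P$.

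If $a_n\le\sqrt P$, then every $a_i\le\sqrt P$, and applying Corollary~\ref{balancecor} to $I=\{1,\dots,n-1\}$ yields, in either alternative (directly in the unbalanced case, and via the second conclusion of Proposition~\ref{mainprop} in the balanced case), some fiber $F_{i_0}$ of twist number $-1$; hence $\tw(\xi)>-a_{i_0}\ge-\sqrt P$. If instead $a_n>\sqrt P$, then $\prod_{i<n}a_i=P/a_n<\sqrt P$, so every surviving multiplicity $a_i$ with $i<n$ is already $<\sqrt P$; now the balanced alternative of Corollary~\ref{balancecor} for $I=\{1,\dots,n-1\}$ feeds Proposition~\ref{mainprop} to give a regular fiber of twist $-\prod_{i<n}a_i=-P/a_n>-\sqrt P$, while the unbalanced alternative produces a twist-$(-1)$ fiber $F_i$ with $i<n$, so $\tw(\xi)>-a_i>-\sqrt P$. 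In every branch a regular Legendrian fiber of twist number exceeding $-\sqrt P$ appears, which is the assertion.

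The part that needs care, and where the square root is forced, is the coordination of these estimates. Discarding the fiber of largest multiplicity is what makes both alternatives cooperate: it simultaneously makes the product $\prod_{i<n}a_i=P/a_n$ that drives Proposition~\ref{mainprop} small exactly when $a_n$ is large, and guarantees that any twist-$(-1)$ fiber one is forced to use in the unbalanced case has multiplicity $<\sqrt P$. The two competing bounds $-P/a_n$ and $-a_i$ cross precisely at $\sqrt P$, which is the origin of the square root in the statement. The main things to verify are that the strict inequalities survive across the whole range (including near the threshold $a_n\approx\sqrt P$) and that the intersection-number computation $a_ik_i+u_i$ genuinely measures twisting against the \emph{fiber} framing, so that the fibers produced are legitimate competitors for $\tw(\xi)$.
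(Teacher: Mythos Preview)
Your proof is correct and uses the same ingredients as the paper's (Corollary~\ref{balancecor}, Proposition~\ref{mainprop}, and the observation that a twist-$(-1)$ representative of $F_i$ yields a Legendrian regular fiber of twist $u_i-a_i>-a_i$), but the case organization is different. Rather than fixing $F_n$ to have largest multiplicity and splitting on whether $a_n\le\sqrt P$, the paper lets the contact structure dictate the labeling: it first observes (via Corollary~\ref{balancecor} and Proposition~\ref{mainprop}) that at least one fiber admits a twist-$(-1)$ representative, and if exactly one does it relabels so this fiber is $F_n$. With that choice the remaining $\{F_1,\dots,F_{n-1}\}$ are automatically twist-balanced, so Proposition~\ref{mainprop} gives $\tw(\xi)\ge-a_1\cdots a_{n-1}$, while the twist-$(-1)$ representative of $F_n$ gives $\tw(\xi)>-a_n$; multiplying these two bounds yields $\tw(\xi)^2<a_1\cdots a_n$ directly, with no case split on the size of $a_n$. (If two distinct fibers $F_i,F_j$ have twist-$(-1)$ representatives, the paper simply uses $\tw(\xi)^2<a_ia_j$.) Your version makes the threshold $\sqrt P$ explicit up front, while the paper's dynamic relabeling hides it inside the product-of-bounds step; both arrive at the same inequality.
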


\begin{proof} We can assume that $\tw(\xi)\leq 0$.

We have seen that if $F_i$ admits a Legendrian representative with twist number $k_i<0$, then a vertical ruling curve on a convex neighborhood of $F_i$ intersects a dividing curve $|a_i k_i + u_i|$ times. In particular, since the ruling curves are Legendrians isotopic to a regular fiber, we have $\tw(\xi) \geq a_i k_i + u_i$. 

Suppose that there exist distinct $i,j\in\{1,\ldots, n\}$ such that both $F_i$ and $F_j$ have representatives with twist number $-1$. Then $\tw(\xi)$ is greater than or equal to both $-a_i + u_i$ and $-a_j + u_j$, hence $\tw(\xi)$ is strictly larger than both $-a_i$ and $-a_j$. Therefore $\tw(\xi)^2 < a_i a_j < a_1\cdots a_n$ as required. 

If there is exactly one multiple fiber that admits a representative with twist number $-1$, renumber the fibers so that $F_n$ admits such a representative. Then it must be that $\{F_1,\ldots, F_{n-1}\}$  admit twist-balanced representatives, and from Proposition \ref{mainprop} there exists a Legendrian regular fiber having twist number $-a_1\cdots a_{n-1}$ and another having twist number greater than $-a_n$ (since $F_n$ has twist number $-1$). Therefore $\tw(\xi)^2 <(a_1\cdots a_{n-1})(a_n)$,
giving the result in this case.

Now suppose that none of the multiple fibers admit representatives of twist number $-1$. By Corollary \ref{balancecor} and Proposition \ref{mainprop}, the multiple fibers admit a set of twist-balanced representatives with twist numbers $k_i$, $i = 1,\ldots, n$, having value $d = a_ik_i + u_i$ with $d\geq -a_1\cdots a_{n-1}$. 

First let us suppose $n \geq 4$, and let $2\leq j \leq n-2$ so that the collections $\{F_1,\ldots, F_j\}$ and $\{F_{j+1},\ldots, F_n\}$ both contain at least two fibers. We apply the cut-and-round procedure to each of these two collections as in the proof of Proposition \ref{mainprop}, to obtain two disjoint tori $T_{CR}$ and $T'_{CR}$, enclosing the first and second collections respectively. From the calculation in that proof, we have the sequence of inequalities
\begin{eqnarray*}
s(T_{CR}) &=& \ts\frac{b_1}{a_1} + \cdots + \frac{b_{j}}{a_{j}}  + \frac{1}{d}\left( \frac{1}{a_1} + \cdots + \frac{1}{a_{j}} - j+1\right) \\
&>& \ts\frac{b_1}{a_1} + \cdots + \frac{b_{j}}{a_{j}}\\
&=& \ts \frac{1}{a_1\cdots a_n} - \frac{b_{j+1}}{a_{j+1}} - \cdots - \frac{b_n}{a_n}\\
&>& \ts - \frac{b_{j+1}}{a_{j+1}} - \cdots - \frac{b_n}{a_n} \\
&>& \ts - \frac{b_{j+1}}{a_{j+1}} - \cdots - \frac{b_n}{a_n} - \frac{1}{d}\left( \frac{1}{a_{j+1}} + \cdots + \frac{1}{a_{n}} - n+j +1\right)\\ 
&=& -s(T'_{CR}).
\end{eqnarray*}
In this caculation we have used \eqref{seifeqn} along with the fact that both $d$ and the quantities in parentheses above are negative. We can choose $T_{CR}$ and $T_{CR}'$ so that they cobound a subset of $Y$ diffeomorphic to $T_{CR}\times [0,1]$ with $T_{CR}\times \{1\}$ identified with $-T'_{CR}$. From the calculation above, it follows that this region contains boundary-parallel tori $T_1$ and $T_2$ with 
\[
s(T_1) = \ts\frac{b_1}{a_1} + \cdots + \frac{b_{j}}{a_{j}}, \quad\quad s(T_2) =  \ts - \frac{b_{j+1}}{a_{j+1}} - \cdots - \frac{b_n}{a_n}.
\]
Since these slopes have denominators $a_1\cdots a_j$ and $a_{j+1}\cdots a_n$ respectively, vertical ruling curves on $T_1$ and $T_2$ have twist numbers $-a_1\cdots a_j$ and $-a_{j+1}\cdots a_n$ respectively. Therefore $\tw(\xi)^2 \leq (a_1\cdots a_j)(a_{j+1}\cdots a_n)$, completing the argument when $n\geq 4$.

For the case $n =3$ we proceed as follows. We are assuming that $F_1, F_2$, and $F_3$ admit twist-balanced representatives with twist numbers $k_i$ satisfying
\begin{equation}\label{ddef}
0 > a_1k_1 + u_1 = a_2 k_2 + u_2 = a_3 k_3 + u_3 = d \geq -a_1a_2.
\end{equation}
In particular $d$ is congruent to $u_i$ mod $a_i$ for $i = 1,2$, hence we have $d = -a_1a_2 + m$ for a unique positive integer $m$ between $1$ and $a_1a_2 - 1$. 

Observe that equation \eqref{seifeqn} implies, after distributing $a_1\cdots a_n$, setting $b =0$ and reducing modulo $a_i$, that  $b_ia_1\cdots \hat{a}_i\cdots a_n = 1$ mod $a_i$ for each $i$. Similarly, the condition $a_i v_i - b_i u_i = 1$ shows $-b_i u_i = 1$ mod $a_i$, hence 
\begin{equation}\label{ucond}
u_i = -a_1\cdots \hat{a}_i\cdots a_n \mod a_i.
\end{equation}
In our situation with $n= 3$, we have
\[
a_3k_3 + u_3 = d = -a_1a_2 + m,
\]
and since $u_3 = -a_1a_2$ mod $a_3$ it follows that $m = 0$ mod $a_3$. Hence $m = ca_3$ for some positive integer $c$. Furthermore, from \eqref{ddef}, \eqref{ucond}, and $d = -a_1a_2 + m$, we infer that 
\[
ca_3 = m = -a_2a_3 \mod a_1 \quad \mbox{and} \quad ca_3 = m = -a_1a_3 \mod a_2.
\]
Since the $a_j$ are relatively prime, it follows that $c = -a_2$ mod $a_1$ and $c= -a_1$ mod $a_2$. Hence we can write 
\begin{equation}\label{ccond}
c = \ell_1 a_1 - a_2 \quad\mbox{and}\quad c = \ell_2 a_2 - a_1
\end{equation}
for some integers $\ell_1, \ell_2$, which must be positive since $c$ is. These then satisfy
\[
\ell_1a_1 + a_1 = \ell_2 a_2 + a_2
\]
and using relatively prime again, we have $\ell_1 + 1 = 0$ mod $a_2$ and $\ell_2 + 1 = 0$ mod $a_1$. Thus
\[
\ell_1 + 1 = x_1a_2 \quad\mbox{and}\quad \ell_2 + 1 = x_2 a_1
\]
for positive integers $x_1$, $x_2$. Using this to substitute for $\ell_1$ in \eqref{ccond} we have
\[
c = (x_1a_2 - 1)a_1 - a_2.
\]
Hence, 
\begin{eqnarray*}
m - a_1a_2 &=& ca_3 - a_1 a_2\\
&=& x_1a_1a_2a_3 - a_1 a_3 - a_2 a_3 - a_1 a_2 \\
&=& \ts a_1a_2a_3(x_1 - \frac{1}{a_1} -  \frac{1}{a_2} - \frac{1}{a_3}).
\end{eqnarray*}
Since $m < a_1a_2$, the left side is negative. On the other hand, because $x_1$ is a positive integer, the only situation in which the right side is negative is when $x_1 = 1$ and $\{a_1,a_2, a_3\} = \{2,3,5\}$. This shows that when there are 3 singular fibers, the twist-balanced situation arising in (c) of the remark after Proposition \ref{mainprop}, i.e., the case where no multiple fibers have representatives of twisting $-1$, can only possibly occur for the Poincar\'e homology sphere $\Sigma(2,3,5)$. For all other $\Sigma(a_1,a_2,a_3)$ we must end in one of the cases considered previously, in which at least one multiple fiber has a representative of twisting $-1$, and the arguments above apply.

Of course, the unique tight contact structure $\xi$ on $\Sigma(2,3,5)$ is well known to have $\tw(\xi) = -1$ (see for example \cite[Theorem 1.3]{Ghiggini:seifert}), so there is no difficulty with this case either.
\end{proof}


Theorem \ref{mainthm} now follows immediately: for $Y = \Sigma(a_1,\ldots, a_n)$ and $\xi$ a contact structure on $Y$, by Theorem \ref{twistthm}, the twisting number of $\xi$ is at least $-\sqrt{a_1\cdots a_n}$. By Corollary \ref{ratboundcor}, $(Y,\xi)$ does not admit a contact type embedding in $(\arr^4, \omega_{std})$. 

\section{Further applications}\label{examplesec} 

\subsection{Planar open books and strong cobordisms to $S^3$}\label{examplesec1}
It is worth noting that, while our obstruction to bounding a symplectically convex domain is the same as the obstruction in \cite{OSS:planar} for a $\xi$ to be supported by a planar open book, we have not proved that no tight structure on a Brieskorn homology sphere is planar. In that direction, we have the following.

\begin{theorem}\label{planarthm} Let $Y$ be a Brieskorn homology sphere with contact structure $\xi$, and let $X_\Gamma$ be the negative definite plumbed 4-manifold with boundary $Y$ as in Section \ref{estimatesec}. Assume that $c^+(\xi)$ is nonzero, and either
\begin{itemize}
\item[(a)] There is a symplectic structure on $X_\Gamma$ that is a filling of $\xi$, or
\item[(b)] The intersection form of $X_{\Gamma}$ is diagonalizable.
\end{itemize}
Then $\xi$ is not supported by a planar open book decomposition. Moreover, if (b) holds, then the contact invariant $c^+(\xi)$ has nonzero image in $HF^+_{red}(-Y)$.
\end{theorem}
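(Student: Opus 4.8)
The plan is to funnel both cases through the planarity obstruction of \cite{OSS:planar}: if $\xi$ is supported by a planar open book, then $c^+(\xi)$ lies in the image of $HF^\infty(-Y,\s_\xi)\to HF^+(-Y,\s_\xi)$. Since $Uc^+(\xi)=0$ always holds and we are assuming $c^+(\xi)\neq 0$, this membership forces $c^+(\xi)=\Theta^+$; equivalently, the image of $c^+(\xi)$ in $HF^+_{red}(-Y)$ vanishes. So in both cases it suffices to show that the hypothesis $c^+(\xi)=\Theta^+$ is untenable. For case (b) the contradiction is exactly the ``Moreover'' statement, and non-planarity follows formally from \cite{OSS:planar}; for case (a) the contradiction will be reached only after first upgrading the hypotheses.

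First I would dispose of case (b), where the intersection form of $X_\Gamma$ is diagonalizable. Suppose for contradiction that $c^+(\xi)=\Theta^+$. Then Lemma \ref{taulemma} gives $\tau_\xi(Y,K)\leq\tau_{sm}(Y,K)$ for every knot $K\subset Y$. On the other hand, the diagonalizability hypothesis is precisely what is required to invoke Corollary \ref{taudiffcor}, producing a knot $K$ isotopic to a regular fiber with
\[
2\bigl(\tau_\xi(Y,K)-\tau_{sm}(Y,K)\bigr)\geq \tw(\xi)+\sqrt{a_1\cdots a_n}+1.
\]
Combining this with the twisting-number bound $\tw(\xi)>-\sqrt{a_1\cdots a_n}$ of Theorem \ref{twistthm} shows the right-hand side is strictly greater than $1$, whence $\tau_\xi(Y,K)>\tau_{sm}(Y,K)$, contradicting the previous inequality. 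Therefore $c^+(\xi)\neq\Theta^+$, so $c^+(\xi)$ has nonzero image in $HF^+_{red}(-Y)$, and $\xi$ cannot be planar.

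For case (a) I would reduce to case (b). Assume $X_\Gamma$ carries a symplectic structure filling $\xi$, and suppose toward a contradiction that $\xi$ is planar. By the theory of strong symplectic fillings of planar contact structures (Etnyre, and Wendl's theorem that such a filling carries a Lefschetz fibration with planar, genus-zero, fibers), the intersection form of $X_\Gamma$ is negative definite and diagonalizable over $\zee$. The diagonalizability hypothesis of case (b) is thus satisfied, and the argument above yields a contradiction, so $\xi$ is not planar. Note that here diagonalizability is obtained only \emph{conditionally} on the planarity assumption that we are contradicting; this is exactly why under (a) we cannot conclude unconditionally that $c^+(\xi)$ is nonzero in $HF^+_{red}(-Y)$, and the stronger statement is claimed only in case (b).

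The main obstacle I anticipate is pinning down the filling-theoretic input used in case (a): one must be sure that the hypothesis of a strong symplectic (not merely Stein) filling of a \emph{planar} contact structure genuinely forces the intersection form of $X_\Gamma$ to be diagonalizable over $\zee$, rather than merely negative definite. Once that statement is secured, the remainder is assembly of results already in hand—the $\Theta^+$ reduction, the $\tau$-inequality of Lemma \ref{taulemma}, the fiber estimate of Corollary \ref{taudiffcor}, and the twisting bound of Theorem \ref{twistthm}. Everything else, including the sharpness of $X_\Gamma$ underlying the smooth estimate, is already internal to the earlier sections.
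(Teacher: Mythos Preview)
Your argument is correct and follows essentially the same route as the paper's proof: case (b) is handled by combining Lemma \ref{taulemma}, Corollary \ref{taudiffcor}, and Theorem \ref{twistthm} to force $c^+(\xi)\neq\Theta^+$, then appealing to \cite{OSS:planar}; case (a) is reduced to (b) via Etnyre's result that a symplectic filling of a planar contact structure has diagonalizable intersection form. The paper phrases case (a) contrapositively---if $X_\Gamma$ is not diagonalizable then \cite[Theorem 1.2]{Etnyre:planar} gives non-planarity directly---and only then runs the $\tau$-argument, but this is logically equivalent to your reduction; your caveat about the filling-theoretic input is resolved precisely by that citation, and Wendl is not needed.
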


\begin{proof} If $(X_\Gamma, \omega)$ is a symplectic filling of $(Y,\xi)$, and the intersection form of $X_\Gamma$ is not diagonalizable, then $\xi$ is not supported by a planar open book by \cite[Theorem 1.2]{Etnyre:planar}. 

Hence, we now assume $X_\Gamma$ has diagonalizable intersection form. By Corollary \ref{taudiffcor} combined with Theorem \ref{twistthm}, we have $\tau_\xi(Y,K) > \tau_{sm}(Y,K)$ for $K$ a regular fiber of the Seifert structure on $Y$. By Lemma \ref{taulemma} the contact invariant $c^+(\xi)\in \ker(U)\subset HF^+(-Y)$ is a nonzero element not equal to $\Theta^+$, the unique nonzero element in the image of $HF^\infty\to HF^+$ and in the kernel of $U$. Hence $c^+(\xi)$ has nonzero image in the quotient $HF^+_{red}(-Y) = HF^+(-Y)/\im(HF^\infty(-Y))$, so $\xi$ is not planar by \cite[Theorem 1.2]{OSS:planar}.
\end{proof}

Observe that the two cases in Theorem \ref{planarthm} are necessary, as seen in the case of $\Sigma(2,3,5)$. This Brieskorn sphere has a unique tight contact structure, which is filled by a Stein structure on the plumbed manifold $X_\Gamma$, where $\Gamma$ is the (non-diagonalizable) negative-definite $E_8$ graph. Moreover, since $HF^+_{red}(-\Sigma(2,3,5)) = 0$, the last line of Theorem \ref{planarthm} does not hold in this case.

On the other hand, the authors are not aware of a fillable contact structure on a Brieskorn homology sphere where neither of the two conditions in the preceding theorem hold, nor yet of a tight structure on a (correctly-oriented) Brieskorn sphere that is not fillable.

\begin{conjecture} No tight, positive contact structure on a Brieskorn homology sphere is supported by a planar open book decomposition.
\end{conjecture}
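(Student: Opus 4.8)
The plan is to prove the conjecture by showing that for every tight, positive contact structure $\xi$ on a Brieskorn homology sphere $Y=\Sigma(a_1,\ldots,a_n)$, at least one of the two obstructions already assembled here must fire: either Etnyre's filling obstruction \cite{Etnyre:planar}, which forbids planarity when some symplectic filling of $\xi$ has non-diagonalizable intersection form, or the Floer-theoretic obstruction behind Theorem \ref{planarthm}(b) (ultimately \cite{OSS:planar}), which forbids planarity when the plumbing $X_\Gamma$ is diagonalizable and $c^+(\xi)\neq 0$. Theorem \ref{planarthm} already settles the two extreme cases, so the work is entirely in closing the gaps between them, of which there are two: the possibility of a \emph{tight but non-fillable} structure (so that $c^+(\xi)$ may vanish), and the possibility that $X_\Gamma$ is non-diagonalizable yet is \emph{not} a filling of the given $\xi$.

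First I would dispose of the fillability gap. The entire Floer half of the argument requires $\hat c(\xi)\neq 0$ (so that $\tau_\xi$ is defined) and $c^+(\xi)\neq 0$ (so that, after deriving $\tau_\xi(Y,K)>\tau_{sm}(Y,K)$ from Corollary \ref{taudiffcor} and Theorem \ref{twistthm}, Lemma \ref{taulemma} yields a genuine contradiction with $c^+(\xi)=\Theta^+$). Both hold once $\xi$ is strongly symplectically fillable. I would therefore reduce this gap to the expected statement that every tight positive contact structure on a standardly-oriented Brieskorn homology sphere is fillable---a Seifert-fibered instance of the general principle, known in many cases through work of Lisca--Stipsicz and Ghiggini, that tightness and fillability coincide on such manifolds. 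Granting this, case (b) is complete.

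The harder gap is the non-diagonalizable case. Here one cannot hope to replace Etnyre's theorem by extending the estimate \eqref{c1bound} of the key Proposition: that estimate rests on the sharp \spinc structures having Chern classes $\sum_j(\pm1)\epsilon_j$, of large pairing with $[D]$, a feature of the diagonal form alone. For a non-diagonalizable negative-definite plumbing the sharp \spinc structure may instead have very small $c_1$---for the even $E_8$ form of $\Sigma(2,3,5)$ it has $c_1=0$, and $d(Y)\neq 0$---so the smooth inequality of Corollary \ref{smtaucor} degrades to the point where $\tau_\xi>\tau_{sm}$ genuinely fails. This is precisely the phenomenon behind the $\Sigma(2,3,5)$ example, in which $c^+(\xi)=\Theta^+$. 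Consequently the non-diagonalizable case must route through Etnyre's filling obstruction, and the task reduces to producing, for each tight $\xi$, a symplectic filling whose intersection form is non-diagonalizable.

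The natural candidate is $X_\Gamma$ itself, the Milnor fillable filling coming from the resolution; the plan is to show that on a Brieskorn homology sphere whose plumbing is non-diagonalizable, every tight positive contact structure is filled by $X_\Gamma$ (or by a symplectic manifold sharing its non-diagonalizable lattice up to blow-up, to which Etnyre still applies). I expect this to be the main obstacle, for exactly the reason flagged in the proof of Theorem \ref{mainthm}: there is no general classification of tight contact structures---let alone of their symplectic fillings---on Brieskorn spheres. Establishing the conjecture in full thus appears to require either such a classification, or a new filling-independent obstruction to planarity that remains effective when $HF^+_{red}(-Y)=0$ and the intersection form is non-diagonalizable, precisely the regime in which both the present Floer obstruction and the sharp-vector estimate are provably blind.
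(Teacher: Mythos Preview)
This statement is a \emph{conjecture} in the paper, not a theorem: the paper provides no proof, and in fact explicitly flags both of the gaps you identify as open (the authors write that they are ``not aware of a fillable contact structure on a Brieskorn homology sphere where neither of the two conditions in the preceding theorem hold, nor yet of a tight structure on a (correctly-oriented) Brieskorn sphere that is not fillable''). So there is no proof in the paper to compare your attempt against.

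As for your proposal itself, it is a clear-eyed diagnosis of exactly why the conjecture is open rather than a proof. You correctly isolate the two missing ingredients---that every tight structure on a standardly-oriented Brieskorn sphere is fillable, and that in the non-diagonalizable case every such structure is filled by something with non-diagonalizable lattice---but you do not establish either one. Your appeal to ``the expected statement'' for fillability is not a proof (the Lisca--Stipsicz and Ghiggini results you allude to do not cover all Brieskorn spheres), and for the non-diagonalizable case you yourself conclude that a classification or a genuinely new obstruction would be required. So the proposal is an honest outline of the difficulty, consistent with the paper's own assessment, but it does not close the gaps and should not be presented as a proof.
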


Combining Theorem \ref{planarthm} with Theorem \ref{obstrthm}, we obtain:

\begin{corollary}\label{strongcobordcor} If $Y$ is a Brieskorn homology sphere such that the negative definite plumbed manifold $X_\Gamma$ bounding $Y$ has diagonalizable intersection form, and $\xi$ is a contact structure on $Y$ with $c^+(\xi)$ nonzero, then there is no strong symplectic cobordism from $(Y,\xi)$ to $S^3$.
\end{corollary}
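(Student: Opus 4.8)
The plan is to combine Theorem~\ref{planarthm} with the obstruction of Theorem~\ref{obstrthm} in the most direct possible way. The key observation is that Theorem~\ref{planarthm}, under hypothesis~(b), does not merely rule out planarity---it actually establishes the stronger Floer-theoretic conclusion that the image of $c^+(\xi)$ in $HF^+_{red}(-Y)$ is \emph{nonzero}. This is precisely the condition whose negation is forced by the existence of a strong symplectic cobordism to $S^3$. So the argument should be a short contrapositive.

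First I would invoke the hypotheses to put myself in the setting of Theorem~\ref{planarthm}(b): $Y$ is a standardly-oriented Brieskorn homology sphere whose negative definite plumbing $X_\Gamma$ has diagonalizable intersection form, and $\xi$ is a contact structure with $c^+(\xi)\neq 0$. The last sentence of Theorem~\ref{planarthm} then yields immediately that $c^+(\xi)$ has nonzero image in $HF^+_{red}(-Y)$; equivalently, $c^+(\xi)$ does \emph{not} lie in the image of $HF^\infty(-Y,\s_\xi)\to HF^+(-Y,\s_\xi)$.

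Next I would suppose, toward a contradiction, that there exists a strong symplectic cobordism from $(Y,\xi)$ to $(S^3,\xi_0)$. Theorem~\ref{obstrthm} then asserts exactly that $c^+(\xi)$ lies in the image of $HF^\infty(-Y,\s_\xi)\to HF^+(-Y,\s_\xi)$, which is to say its image in $HF^+_{red}(-Y)$ vanishes. This directly contradicts the conclusion of the previous step, so no such cobordism can exist.

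I do not anticipate a genuine obstacle here: the entire content has already been assembled in the cited results, and the corollary is essentially a bookkeeping combination of the nonvanishing of $c^+(\xi)$ in $HF^+_{red}(-Y)$ (from the twisting-number estimate feeding into Theorem~\ref{planarthm}) against the vanishing forced by a strong cobordism (Theorem~\ref{obstrthm}). The only point requiring a moment's care is making sure the two statements refer to the \emph{same} characterization of the reduced image---namely that ``nonzero in $HF^+_{red}(-Y)$'' and ``not in the image of $HF^\infty\to HF^+$'' are literally the same condition, which is built into the definition of $HF^+_{red}$ recalled in Section~\ref{obstrsec1}. Once that identification is noted, the proof is complete.
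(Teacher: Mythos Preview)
Your argument is essentially the paper's: the corollary is stated immediately after the sentence ``Combining Theorem~\ref{planarthm} with Theorem~\ref{obstrthm}, we obtain,'' and the content is exactly the contrapositive you describe. One small point you should add: the corollary as stated does not specify the contact structure on $S^3$, so strictly speaking Theorem~\ref{obstrthm} only handles $(S^3,\xi_0)$. The paper notes separately that any other contact structure on $S^3$ is overtwisted, hence has vanishing contact invariant, and by Echeverria's naturality result cannot be the target of a strong cobordism from a contact manifold with $c^+(\xi)\neq 0$; you should include this remark to cover the general case.
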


Note that the contact structure on $S^3$ is immaterial: the cited theorems rule out the standard structure $\xi_0$, while every other contact structure on $S^3$ is overtwisted and hence has vanishing contact invariant. Such contact structures cannot be reached by a strong cobordism from a contact structure with nonzero contact invariant by Echeverria's naturality result. Also note that, as in (b) of Theorem \ref{planarthm}, it is not assumed that $X_\Gamma$ admits any symplectic structure filling $\xi$; in fact $\xi$ is not assumed fillable.

However, as mentioned above, we do not know of an example of a positive tight contact structure on a Brieskorn homology sphere that is not fillable. In the fillable case, the existence of a strong cobordism to $S^3$ is ``nearly'' equivalent to the existence of a contact-type embedding in $(\arr^4, \omega_{std})$ in the sense of the Proposition below, which is probably known to experts.

\begin{proposition}\label{strongcobordprop} Let $Y$ be an oriented 3-manifold and $\xi$ a strongly symplectically fillable contact structure on $Y$. There is a strong cobordism from $(Y,\xi)$ to $(S^3, \xi_0)$ if and only if there exists an integer $k\geq 0$ and a symplectic structure $\omega_k$ on a $k$-fold blowup $X_k \cong \arr^4\#^k {\overline{\cee P}}^2$ of $(\arr^4, \omega_{std})$, equal to $\omega_{std}$ outside a compact set, such that $(Y,\xi)$ embeds as the boundary of a symplectically convex domain in $X_k$.

Moreover, the preceding holds if and only if for every strong symplectic filling $(W, \omega_W)$ of $(Y,\xi)$ there exists $k$ and a symplectic embedding $(W, \omega_W)\to (X_k, \omega_k)$ with $\omega_k$ as before.
\end{proposition}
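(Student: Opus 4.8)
The plan is to prove both stated biconditionals simultaneously by establishing a cycle of implications among three conditions. Write (A) for the existence of a strong symplectic cobordism from $(Y,\xi)$ to $(S^3,\xi_0)$; write (B) for the existence of some $k\geq 0$ and a symplectic form $\omega_k$ on $X_k\cong\arr^4\#^k{\overline{\cee P}}^2$, standard at infinity, with $(Y,\xi)$ embedded as the boundary of a symplectically convex domain; and write (C) for the assertion that \emph{every} strong filling $(W,\omega_W)$ of $(Y,\xi)$ admits a symplectic embedding into some such $(X_k,\omega_k)$. I would prove (A)$\Rightarrow$(C)$\Rightarrow$(B)$\Rightarrow$(A); this yields the equivalence of all three, and in particular both biconditionals in the statement.

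For (A)$\Rightarrow$(C): given a strong cobordism $C\colon(Y,\xi)\to(S^3,\xi_0)$ and an arbitrary strong filling $(W,\omega_W)$ of $(Y,\xi)$, I would glue the convex end of $W$ to the concave end of $C$ along $Y$ in the standard way, matching Liouville collars, to obtain a strong symplectic filling $\widehat{W}=W\cup_Y C$ of $(S^3,\xi_0)$. The essential input is the classification of symplectic fillings of the standard contact $3$-sphere: by McDuff's theorem every strong filling of $(S^3,\xi_0)$ is a symplectic blowup of the standard ball, so completing $\widehat{W}$ by the standard conical end (a copy of $\arr^4\setminus\mathrm{int}(B^4)$) produces an open symplectic manifold that is standard at infinity and symplectomorphic to a blowup of $(\arr^4,\omega_{std})$, where $k$ counts the exceptional spheres needed to reduce $\widehat{W}$ to the ball. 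This symplectomorphism embeds $\widehat{W}$ as a symplectically convex domain in some $(X_k,\omega_k)$, and restricting to the subdomain $W\subset\widehat{W}$ gives the desired embedding, proving (C).

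For (C)$\Rightarrow$(B): since $\xi$ is strongly fillable by hypothesis, I would choose any strong filling $(W,\omega_W)$; by (C) it embeds symplectically into some $(X_k,\omega_k)$, and because the embedding is symplectic the convex boundary $(Y,\xi)$ of $W$ maps to a contact-type hypersurface along which the image Liouville field points outward, so the image is a symplectically convex domain with boundary $(Y,\xi)$, which is (B). For (B)$\Rightarrow$(A): given a symplectically convex domain $D\subset X_k$ with $\partial D=Y$ and $\omega_k$ standard outside a compact set $\mathcal{K}$, I would choose a large standard round sphere $S^3_R$ in the standard region with $D\cup\mathcal{K}\subset\mathrm{int}(B_R)$ and set $Z=B_R\setminus\mathrm{int}(D)$. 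Exactly as in the proof of Corollary \ref{reducedcor}, $Z$ is then a strong symplectic cobordism from $(Y,\xi)$ to $(S^3,\xi_0)$: convexity of $D$ makes the Liouville field point into $Z$ along $Y$, while the standard radial field points out of $Z$ along $S^3_R$. Note that $Z$ need not be a product, since it carries the exceptional spheres of $X_k$, but a strong cobordism requires the Liouville field only near $\partial Z$. This gives (A) and closes the cycle.

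The main obstacle is the structural input used in (A)$\Rightarrow$(C): upgrading the abstract capped filling $\widehat{W}$ of $(S^3,\xi_0)$ to a concrete symplectic embedding into a standard-at-infinity blowup of $\arr^4$. This is exactly where McDuff's classification of fillings of $(S^3,\xi_0)$ does the real work, together with the facts that non-minimal fillings arise from the ball by symplectic blowup and that a blowup of the ball completes to a standard-at-infinity blowup of $\arr^4$ (the latter in the spirit of Gromov's uniqueness of standard-at-infinity forms). The remaining implications are formal cut-and-paste of symplectic cobordisms and manipulation of Liouville collars; the only genuine care needed is in tracking the orientations of the concave and convex ends and ensuring the collars match under each gluing, which is routine.
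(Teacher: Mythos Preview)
Your proposal is correct and follows essentially the same approach as the paper: the paper also proves the backward direction by taking the complement of the convex domain in a large ball, and the forward direction by gluing an arbitrary strong filling $W$ to the given cobordism, invoking McDuff's classification of fillings of $(S^3,\xi_0)$, and then attaching the positive symplectization to obtain the required $(X_k,\omega_k)$. Your organization into the cycle (A)$\Rightarrow$(C)$\Rightarrow$(B)$\Rightarrow$(A) is a clean way to capture both biconditionals at once, but the substantive arguments are identical to the paper's.
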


\begin{proof} Thinking of $X_k$ as the blowup of $\arr^4$ at points $p_1,\ldots, p_k$, the hypothesis on $\omega_k$ means that the blowdown map $X_k \to \arr^4$ is a symplectomorphism when restricted to the complement of the preimage $\widetilde{B}$ of any sufficiently large ball $B\subset \arr^4$. If $(Y,\xi)$ is the boundary of a symplectically convex domain $W\subset X_k$, then for $B$ large enough we have $W\subset \widetilde{B}$, and $\widetilde{B} - \Int(W)$ is a strong cobordism from $(Y,\xi)$ to $(S^3, \xi_0)$.

Conversely, suppose $(W, \omega_W)$ is a strong filling of $(Y,\xi)$ and that $(Z, \omega_Z)$ is a strong cobordism from $(Y,\xi)$ to $(S^3, \xi_0)$. Then a standard argument shows there is a smooth symplectic form on $W\cup_Y Z$ agreeing with the given forms on $W$ and $Z$, and therefore strongly filling $\partial(W\cup_Y Z) = (S^3,\xi_0)$. It is well known by work of McDuff \cite{mcduff90} that every strong symplectic filling of $S^3$ is diffeomorphic to the $k$-fold blowup of $B^4$ for some $k$, hence attaching the positive symplectization of $(S^3, \xi_0)$ to $\partial(W\cup_Y Z)$ yields a symplectic manifold $(X_k,\omega_k)$ as in the statement, containing $(Y,\xi)$ as the boundary of the symplectically convex domain $W$.
\end{proof}

This gives an analog of Theorem \ref{planarthm} for the question of strong cobordisms to $S^3$:

\begin{corollary} Let $Y$ be a Brieskorn integer homology sphere and $\xi$ a contact structure with $c^+(\xi)$ nonzero. Assume that either condition (a) or (b) of Theorem \ref{planarthm} holds. Then there is no strong symplectic cobordism from $(Y,\xi)$ to $S^3$.
\end{corollary}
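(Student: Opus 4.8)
The plan is to rule out such a cobordism by splitting into two complementary cases according to whether the intersection form of $X_\Gamma$ is diagonalizable, paralleling the two cases of Theorem \ref{planarthm}. First I would reduce to a single target: it suffices to rule out a strong cobordism to $(S^3,\xi_0)$, because every other contact structure on $S^3$ is overtwisted and so has vanishing contact invariant; since $c^+(\xi)\neq 0$, Echeverria's naturality result (as in the remark following Corollary \ref{strongcobordcor}) shows such a target cannot be reached by a strong cobordism from $(Y,\xi)$.

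If the intersection form of $X_\Gamma$ is diagonalizable, then hypothesis (b) holds and the argument is purely Floer-theoretic, exactly as in Corollary \ref{strongcobordcor}. Indeed, Theorem \ref{planarthm}(b) gives that $c^+(\xi)$ has nonzero image in $HF^+_{red}(-Y)$, i.e. $c^+(\xi)\notin \im(HF^\infty(-Y,\s_\xi)\to HF^+(-Y,\s_\xi))$, whereas Theorem \ref{obstrthm} shows that a strong cobordism to $(S^3,\xi_0)$ would force $c^+(\xi)$ to lie in precisely that image. These conclusions are incompatible, so no such cobordism exists.

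The genuinely new case is when the intersection form of $X_\Gamma$ is non-diagonalizable; then (b) fails, so (a) must hold and $X_\Gamma$ carries a strong symplectic filling $(X_\Gamma,\omega)$ of $\xi$. Here the plan is to argue by contradiction using a gluing together with a lattice-theoretic rigidity, with McDuff's classification of fillings of $S^3$ playing the part that Etnyre's obstruction plays in the planar setting. Suppose $(Z,\omega_Z)$ were a strong symplectic cobordism from $(Y,\xi)$ to $(S^3,\xi_0)$. Gluing it to the filling along $Y$, exactly as in the proof of Proposition \ref{strongcobordprop}, yields a strong symplectic filling $(W,\omega_W)$ of $(S^3,\xi_0)$ with $W = X_\Gamma\cup_Y Z$. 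By McDuff's theorem \cite{mcduff90}, $W$ is diffeomorphic to a blowup of $B^4$, so its intersection form is the diagonal unimodular lattice $k\langle -1\rangle$. Since $Y$ is an integer homology sphere, a Mayer--Vietoris argument gives $H_2(W;\zee)\cong H_2(X_\Gamma;\zee)\oplus H_2(Z;\zee)$, and because the classes from the two pieces are carried by disjoint surfaces, the form of $W$ splits as the orthogonal direct sum $Q_{X_\Gamma}\oplus Q_Z$. As $\partial X_\Gamma = Y$ is a homology sphere, $Q_{X_\Gamma}$ is unimodular, hence occurs as an orthogonal unimodular direct summand of $k\langle -1\rangle$. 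I would then invoke the uniqueness of the decomposition of a negative definite unimodular lattice into indecomposable orthogonal summands to conclude that $Q_{X_\Gamma}$ is itself diagonal, contradicting non-diagonalizability.

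I expect the main obstacle to be this last, lattice-theoretic step: one must know that an orthogonal unimodular summand of the standard diagonal lattice is again diagonal. This follows from the classical uniqueness of orthogonal decomposition of definite lattices into indecomposables (Eichler), since every indecomposable summand of $k\langle -1\rangle$ is isomorphic to $\langle -1\rangle$; thus the indecomposable summands of $Q_{X_\Gamma}$ must all be $\langle -1\rangle$. The remaining ingredients---the reduction to the target $(S^3,\xi_0)$, the gluing of the filling to the cobordism, and the computation of $H_2(W)$ and its intersection form---are either recorded elsewhere in the paper or routine. The conceptual point to emphasize is that McDuff's classification of strong fillings of $S^3$ supplies exactly the rigidity that Etnyre's result on planar open books provides in Theorem \ref{planarthm}, which is why the two statements run in parallel.
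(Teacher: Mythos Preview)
Your proposal is correct and follows essentially the same approach as the paper: the diagonalizable case is Corollary \ref{strongcobordcor}, and in the non-diagonalizable case you glue the filling $X_\Gamma$ to the putative cobordism, apply McDuff's classification of fillings of $(S^3,\xi_0)$, and use that $Q_{X_\Gamma}$ sits as an orthogonal unimodular summand of a diagonal lattice to force diagonalizability, a contradiction. The paper packages the gluing step as an appeal to Proposition \ref{strongcobordprop} and phrases the conclusion as ``(a) plus a strong cobordism implies (b),'' but the content is the same; your explicit invocation of Eichler's uniqueness for the lattice step is exactly what underlies the paper's one-line assertion.
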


Indeed, if (a) holds and there is a strong cobordism to $S^3$, then by Proposition \ref{strongcobordprop} it must be that $X_\Gamma$ embeds in a blowup $X_k$ of $\arr^4$. Such a blowup has negative-definite, diagonalizable intersection form, and since $Y$ is an integer homology sphere the intersection form of $X_\Gamma$ is a direct sum factor of that of $X_k$. Hence the intersection form of $X_\Gamma$ is also diagonalizable, which is to say (b) of Theorem \ref{planarthm} holds as well. This contradicts Corollary \ref{strongcobordcor}.


\subsection{Additional examples}\label{examplesec2}
  
Here we provide some examples and discussion relevant to the questions mentioned in the introduction, particularly the implications in \eqref{implications}. Sometimes we will relax the condition that $Y$ be an integer homology sphere. 

As a first question, one can ask if the implication B2$\implies$B1, that a homology sphere that embeds in $\arr^4$ bounds a homology ball, can be reversed. If we consider homology with rational coefficients then the answer is ``no,'' which we can see as follows.  For any rational homology sphere $Y$, the connected sum $Y\#-Y$ bounds a rational homology ball, namely $(Y - B^3)\times I$. But the connected sum need not embed in $\arr^4$: taking $Y = L(p,q)$ to be a lens space, it is a consequence of work of Zeeman \cite{zeeman} and Epstein \cite{epstein} that $L(p,q)-B^3$ admits a smooth embedding in $\arr^4$ if and only if $p$ is odd. Hence $L(2p,q)\#L(2p, 2p-q)$ satisfies B1 but not B2. The authors are unaware of such an example among integer homology spheres.

There are many examples of 3-manifolds (including integer homology spheres) that satisfy B2 but neither B3 nor B3'. Indeed, if $(Y,\xi)$ is a contact homology sphere embedded in $\cee^2$ either as a hypersurface of contact type or as the boundary of a Stein domain, then $\xi$ is strongly symplectically filled by the bounded component $W$ of the complement of $Y$, which is a homology ball. Recall that the homotopy class of $\xi$ as a tangent plane field is captured by the numerical invariant $\theta(\xi)$, which by definition equals $c_1^2(W, J) - 3\sigma(W) - 2\chi(W)$ for any almost-complex 4-manifold $(W,J)$ with boundary $Y$, such that $\xi$ is $J$-invariant. In our case $c_1(W, J)$ is necessarily zero, so $\theta(\xi) = -2$. However, there are many examples of homology spheres that embed smoothly in $\arr^4$ but do not carry any symplectically fillable contact structure $\xi$ having $\theta(\xi) = -2$, such as $-M_p$ for $p\geq 2$, where $M_p$ is the Seifert rational homology sphere considered in Section \ref{unobstrsec} (see \cite[Lemma~$21$]{Tosun20}). Irreducible integer homology spheres not carrying fillable structures with $\theta = -2$ include $-\Sigma(2,3,12n + 1)$ for $n\geq 1$\cite{MT:pseudoconvex}, though for $n\geq 3$ is unknown if these manifolds admit smooth embeddings in $\arr^4$.

In light of Theorem \ref{mainthm}, Gompf's Conjecture \ref{gompfconj} would imply that all Brieskorn spheres fail to satisfy both B3 and B3', for reasons that in many cases must be deeper than these homotopy considerations.

Further examples relating to B3, B3' and B4 will make repeated use of the following result of Gompf.

\begin{theorem}[Gompf \cite{gompfemb}]\label{gompfthm} Let $W$ be a compact 4-manifold with boundary, $(X, J_X)$ a complex surface with complex structure $J_X$, and $\psi: W\to X$ a smooth embedding. If the induced complex structure $\psi^*(J_X)$ is homotopic through almost-complex structures to a Stein structure $J_W$ on $W$, then $\psi$ is smoothly isotopic to a holomorphic embedding $\tilde\psi: (W, J_W)\to (X, J_X)$. In particular, $\tilde\psi(W)$ is a Stein domain in $X$ biholomorphic to $(W, J_W)$.
\end{theorem}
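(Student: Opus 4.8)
The plan is to build the holomorphic embedding handle by handle, using Eliashberg's handle-theoretic description of Stein structures together with the classical fact that totally real (in particular isotropic Legendrian) submanifolds of a complex surface admit Stein neighborhoods. First I would fix a handle decomposition of $W$ adapted to the Stein structure $J_W$: by Eliashberg's theorem, $J_W$ is deformation equivalent to a Stein structure arising from a handlebody with a single $0$-handle, some $1$-handles, and $2$-handles attached along Legendrian knots in the contact boundary of the sub-handlebody of lower index, each with framing one less than its Thurston--Bennequin number. The homotopy class of the underlying almost-complex structure is then encoded by the Chern class together with the rotation numbers of these Legendrian attaching circles, so that the hypothesis $\psi^*(J_X)\simeq J_W$ records exactly the almost-complex data we must match.

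Next I would construct $\tilde\psi$ by induction over the handles. The $0$-handle is realized as a holomorphically embedded closed ball in $X$, which exists near any point of $\psi(W)$ and may be positioned to agree with $\psi$ up to isotopy. For the inductive step, having built a strictly pseudoconvex sub-domain $W_k\subset X$ biholomorphic to the union of handles through index $k$, one attaches the next handle holomorphically. The crucial input is that the attaching sphere of a Stein handle is isotropic, and a neighborhood of such a totally real submanifold of the complex surface $X$ carries a strictly plurisubharmonic function by the results of Grauert. The framing condition $\tb-1$ on each $2$-handle is precisely what allows the local Stein handle model of Eliashberg to be glued to the plurisubharmonic function already defined on $W_k$ while preserving pseudoconvexity, yielding a larger sublevel set $W_{k+1}$ that is again a Stein domain in $X$.

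Having attached all handles, the resulting domain $\tilde\psi(W)$ is a sublevel set of a strictly plurisubharmonic function on a neighborhood in $X$, hence a Stein domain, and by construction it is biholomorphic to $(W,J_W)$. Because the smooth handle structure used is identical to that of $\psi(W)$, and the homotopy class of complex structures was matched in the first step, the embedding $\tilde\psi$ is smoothly isotopic to $\psi$; tracking this isotopy through the inductive construction produces the stated conclusion.

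I expect the main obstacle to be the holomorphic attachment of the $2$-handles in the inductive step. Attaching a Stein $2$-handle along a Legendrian knot requires extending the plurisubharmonic function across the handle without creating critical points that violate pseudoconvexity, and it requires positioning the Legendrian attaching circle inside the pseudoconvex boundary $\partial W_k\subset X$ compatibly with the ambient $J_X$. This is exactly the delicate part of Eliashberg's theory, and in the embedded setting one must additionally ensure that the attaching circle can be realized through an isotopy of $\psi$ rather than merely abstractly; the Thurston--Bennequin framing condition is what makes these two requirements compatible.
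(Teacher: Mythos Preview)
This theorem is not proved in the paper; it is quoted from Gompf's work \cite{gompfemb} and used as a black box in Section~\ref{examplesec}. There is therefore no ``paper's own proof'' to compare your proposal against.

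That said, your sketch is broadly in the spirit of Gompf's actual argument: the proof does proceed by isotoping the embedding handle by handle, using Eliashberg's characterization of Stein domains as $0$-, $1$-, and $2$-handlebodies with Legendrian $2$-handle attaching circles at framing $\tb-1$, and realizing each handle holomorphically in the target. One point you underemphasize is the role of the ambient isotopy of the \emph{embedding} (not just abstract deformation of Stein structures): Gompf works carefully to ensure that each step is achieved by an ambient isotopy of $\psi$ inside $X$, which is where the hypothesis that $\psi^*(J_X)$ is homotopic to $J_W$ does real work beyond just matching Chern classes and rotation numbers. Your final paragraph gestures at this, but the actual mechanism---controlling the isotopy of the attaching Legendrians inside the already-constructed pseudoconvex boundary in $X$---is the heart of the matter and is not something that falls out of Eliashberg's abstract handle-attachment theorem alone.
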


We use the term ``Stein embedding'' for a map satisfying the properties of $\tilde\psi$ in this theorem.

As a special case, note that if $W$ is an integer homology ball, then $W$ carries a unique homotopy class of almost-complex structure. Thus if $W$ admits a Stein structure $J$ (as an abstract manifold), then any smooth embedding of $W$ into a complex manifold is isotopic to a Stein embedding of $(W, J)$.


\subsection{Hyperbolic examples}\label{hypexsec} Here we provide a family of examples of integer homology spheres satisfying B3' but not B3, when the contact structure is fixed. For an integer $n>0$, consider the smooth 4-manifold $W_n$ whose handle description is given in Figure \ref{PCStein}.
\begin{figure}[h!]
\begin{center}
 \includegraphics[width=7cm]{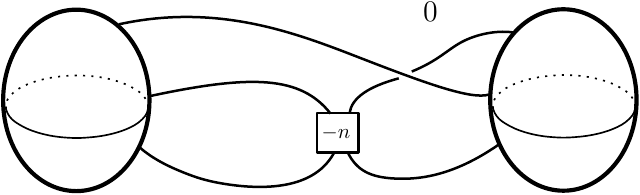}
 \caption{Contractible manifold $W_n$. The box indicates $n$ full left-handed twists.}
  \label{PCStein}
\end{center}
\end{figure} 

For $n= 1$, this is the well-known Mazur cork considered by Akbulut in \cite{akbulut:cork}. In general, it is easy to see that $W_n$ is contractible, and embeds in $\arr^4$ (this holds by the general principle mentioned after Conjecture \ref{kollarconj}, or here by observing that the double of $W_n$ is $S^4$ using handle calculus \cite{GS}). Furthermore, it is straightforward to convert the diagram of Figure \ref{PCStein} to a Stein diagram: one realizes the 0-framed circle as a Legendrian with Thurston--Bennequin number 2, so after a single stabilization we obtain a Stein structure $J_n$ on $W_n$ (see \cite[Figure 3]{KOU}). Write $\xi_n$ for the associated contact structure on $Y_n = \partial W_n$. Strictly there is a choice involved in the stabilization, but the resulting contact structures are contactomorphic; we fix one such choice.

According to \cite[Theorem 1.2(2)]{KOU} (see also \cite{AKcork}), the contact invariant $c^+(\xi_n)$ has nonzero image in $HF^+_{red}(-Y_n)$. By Corollary \ref{reducedcor} this fact obstructs $(Y_n,\xi_n)$ from being a symplectically convex boundary, and using Theorem \ref{gompfthm} we obtain:

\begin{proposition} There exists an infinite family $(Y_n, \xi_n)$ of contact structures on hyperbolic integer homology 3-spheres such that $Y_n$ embeds smoothly in $\cee^2$ as the boundary of a contractible Stein domain $W_n\subset \cee^2$ with induced contact structure $\xi_n$, but $\xi_n$ does not arise from a contact type embedding of $Y_n$ in $(\arr^4, \omega_{std})$. In particular, $W_n$ is not isotopic to a symplectically convex domain by any isotopy that preserves the contact structure on the boundary.
\end{proposition}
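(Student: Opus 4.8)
The plan is to construct the family $(Y_n, \xi_n)$ explicitly and then invoke the machinery already assembled in the excerpt. First I would verify that the manifolds $W_n$ of Figure \ref{PCStein} are contractible integer homology balls: this is a handle-calculus computation showing the single $1$-handle and single $2$-handle cancel algebraically, so $\widetilde{H}_*(W_n;\zee)=0$, and in fact $W_n$ is contractible. I would confirm that $Y_n = \partial W_n$ is hyperbolic by citing the relevant description of these as a known family (for $n=1$ this is the Mazur cork of Akbulut \cite{akbulut:cork}, and the general case follows by the same methods, the hyperbolicity being checkable e.g.\ via SnapPy or the geometrization of the complements). The smooth embedding $W_n \hookrightarrow \arr^4$ follows from the general principle noted after Conjecture \ref{kollarconj}, since $W_n$ admits a handle decomposition with handles of index $\leq 2$; alternatively one checks the double of $W_n$ is $S^4$.

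Next I would equip $W_n$ with a Stein structure. The key move is to convert the diagram to a Legendrian front so that the $0$-framed $2$-handle attaches along a Legendrian knot with Thurston--Bennequin number $2$; after a single stabilization (lowering $\tb$ to match the required framing $0 = \tb - 1$) we obtain a genuine Stein structure $J_n$, giving a contact structure $\xi_n$ on $Y_n$, exactly as in \cite[Figure 3]{KOU}. Since $W_n$ is an integer homology ball, the almost-complex structure is homotopically unique, so by Theorem \ref{gompfthm} the smooth embedding $W_n\hookrightarrow\cee^2$ is isotopic to a Stein embedding realizing $(Y_n,\xi_n)$ as a Stein boundary in $\cee^2$; thus $Y_n$ satisfies B3$'$ with this contact structure.

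The crux is to show $\xi_n$ is \emph{not} realized by any contact type embedding in $(\arr^4,\omega_{std})$. Here I would invoke \cite[Theorem 1.2(2)]{KOU}, which computes that the contact invariant $c^+(\xi_n)$ has nonzero image in $HF^+_{red}(-Y_n)$. By Corollary \ref{reducedcor}, any $(Y,\xi)$ arising as the boundary of a symplectically convex domain in $(\arr^4,\omega_{std})$ must have $c^+(\xi)$ mapping to zero in $HF^+_{red}(-Y)$; since $Y_n$ is an integer homology sphere, the remark following Corollary \ref{reducedcor} guarantees that any contact type embedding is automatically symplectically convex. The nonvanishing of the reduced image therefore directly obstructs $(Y_n,\xi_n)$ from arising this way, establishing the main assertion. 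The final sentence of the Proposition is then immediate: if some isotopy of $\arr^4$ carried the Stein domain $W_n$ to a symplectically convex domain while preserving the contact structure on $\partial W_n = Y_n$, the resulting symplectically convex boundary would be $(Y_n, \xi_n)$, contradicting what we just proved.

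I expect the main obstacle to be purely expository rather than mathematical, since the heavy lifting is done by \cite{KOU} and the results of Section \ref{obstrsec}. The one point requiring genuine care is confirming that the family $W_n$ is an honest infinite family of \emph{distinct} hyperbolic homology spheres (so that we do not merely reproduce a single example under different names), which I would address by distinguishing the $Y_n$ via their hyperbolic volumes or via the explicit $HF^+_{red}$ computations of \cite{KOU}, and by checking that the stabilization choice in the Stein structure, though not canonical, yields contactomorphic $\xi_n$ so that the statement is well-posed.
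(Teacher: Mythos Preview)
Your proposal is correct and follows essentially the same route as the paper: construct $W_n$, verify contractibility and the smooth embedding in $\arr^4$, produce the Stein structure $J_n$ via the Legendrian realization with $\tb=2$ plus one stabilization (citing \cite{KOU}), apply Theorem~\ref{gompfthm} using uniqueness of the almost-complex structure on a homology ball, and then invoke \cite[Theorem~1.2(2)]{KOU} together with Corollary~\ref{reducedcor} to rule out the contact type embedding. The only cosmetic difference is that the paper cites \cite[Theorem~1.2(3)]{KOU} directly for hyperbolicity rather than appealing to SnapPy or geometrization.
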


The fact that the $Y_n$ are hyperbolic is given in \cite[Theorem 1.2(3)]{KOU}.

In particular, $(W_n, J_n)$ is not Stein equivalent to any rationally convex domain in $\cee^2$. Moreover, recall that a Stein domain $(W,J)$ (not necessarily in $\cee^2$) has a unique corresponding homotopy class of compatible Weinstein structure $(W, \omega_W, v)$ where $\omega_W$ is a symplectic structure and $v$ a ``gradient-like'' Liouville field \cite{CE} (indeed, one can take $\omega_W = -dd^c\phi$ for a defining strictly $J$-convex Morse function $\phi$, and $v$ the gradient of $\phi$ with respect to the K\"ahler metric induced by $J$ and $\omega_W$). Since a Weinstein domain is symplectically convex, we infer:

\begin{corollary} There is no compatible Weinstein structure $(\omega_{W_n}, v)$ on $(W_n, J_n)$ that embeds symplectically in $(\arr^4, \omega_{std})$.
\end{corollary}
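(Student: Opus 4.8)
The plan is to argue by contradiction, reducing directly to the Proposition just proved. Suppose there were a Weinstein structure $(\omega_{W_n}, v)$ on $W_n$ compatible with $J_n$, together with a symplectic embedding $\psi\colon (W_n, \omega_{W_n})\to(\arr^4,\omega_{std})$. First I would record the two features of a \emph{compatible} Weinstein structure that matter: by definition of a Weinstein domain, $v$ is a Liouville field for $\omega_{W_n}$ that is transverse and outward-pointing along $\partial W_n = Y_n$; and, as recalled in the remark preceding the statement, the homotopy class of compatible Weinstein structures is determined by $J_n$, so the contact structure $v$ induces on the boundary is exactly the field of complex tangencies, namely $\xi_n$.

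Next I would push everything forward under $\psi$. Since $\psi$ is a symplectic embedding we have $\psi^*\omega_{std} = \omega_{W_n}$, and therefore $\psi_* v$ is a Liouville field for $\omega_{std}$ defined on $\psi(W_n)$; along the boundary $\psi(Y_n)$ it remains transverse and outward-directed. Thus $\psi(W_n)$ is a compact domain in $(\arr^4,\omega_{std})$ with smooth connected boundary, equipped with an outward Liouville field near that boundary --- precisely a \emph{symplectically convex} domain in the sense following Corollary \ref{reducedcor}. Because $\psi$ is a diffeomorphism onto its image carrying $v$ to $\psi_* v$, the contact structure induced on $\psi(Y_n)$ by $\psi_* v$ is $\psi_*\xi_n$, so $(\psi(Y_n),\psi_*\xi_n)$ is contactomorphic to $(Y_n,\xi_n)$. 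This exhibits $(Y_n,\xi_n)$ as the boundary of a symplectically convex domain, equivalently (since $Y_n$ is an integer homology sphere, so $H^1(Y_n;\arr)=0$ and the two boundary orientations agree) as a contact type hypersurface in $(\arr^4,\omega_{std})$, contradicting the Proposition.

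The only point I would expect to require genuine care is the claim that a Weinstein structure compatible with $J_n$ induces exactly $\xi_n$ on the boundary with the outward orientation; everything else is formal functoriality of Liouville fields under a symplectomorphism. This identification is essentially guaranteed by the uniqueness up to homotopy of the compatible Weinstein structure recalled in the remark, but it is worth stating explicitly so that the reduction to the Proposition is airtight. I would therefore spend a sentence pinning down that the field of complex tangencies induced by the gradient-like Liouville field of $-dd^c\phi$ agrees with $\xi_n$, after which the contradiction follows immediately.
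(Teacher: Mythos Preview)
Your argument is correct and is precisely the intended one: the paper does not give a separate proof but simply writes ``we infer,'' relying on the same reduction you carry out---push the Liouville field forward under the symplectic embedding to obtain a symplectically convex domain in $(\arr^4,\omega_{std})$ with boundary $(Y_n,\xi_n)$, contradicting the Proposition. Your explicit identification of the boundary contact structure with $\xi_n$ via the compatibility of the Weinstein structure with $J_n$ is exactly the point the paper leaves implicit in the preceding remark.
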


Again, these examples indicate that B3' need not imply B3. Consider the converse, whether the condition B3 ($Y$ embeds in $\arr^4$ as a hypersurface of contact type) implies B3' ($Y$ embeds as the boundary of a Stein domain). If the symplectically convex region $W$ bounded by a contact type hypersurface admits a Weinstein structure, then it also admits a homotopic Stein structure, and hence by Theorem \ref{gompfthm} it is isotopic to a Stein domain. But from the above corollary, we see that should such a Weinstein structure exist, it need not be apparent in the ambient symplectic structure---one may need to modify the symplectic structure on the domain. 

To add further interest to this family of examples, Karakurt-Oba-Ukida \cite{KOU} proved that the manifold $W_n$ carries {\it another} Stein structure, and for this structure the situation regarding an isotopy to a symplectically convex domain is not clear. Indeed, those authors construct an ``allowable'' symplectic Lefschetz fibration on $W_n$, having fibers of genus 0. This structure determines a deformation class of Stein structure $J'_n$ on $W_n$ and in particular a contact structure $\xi_n'$ on $Y_n$ supported by a genus 0 open book on the boundary. It follows \cite[Theorem 1.2]{OSS:planar} that $c^+(\xi_n')$ vanishes in $HF^+_{red}(-Y_n)$, so we gain no information on whether $\xi_n'$ can arise as a symplectically convex boundary. 

By Theorem \ref{gompfthm}, the smooth embedding of $W_n$ in $\cee^2$ is isotopic to Stein embeddings with respect to {\it either} $J_n$ or $J_n'$. The images of these embeddings are then smoothly isotopic contractible Stein domains that induce different contact structures on their boundaries; one of these domains cannot be symplectically convex. It seems an interesting problem to determine whether $(W_n, J_n')$ can be made symplectically convex.

\subsection{Unobstructed Seifert examples}\label{unobstrsec} We now examine a family of examples (Seifert rational homology spheres) generalizing the manifold shown by Nemirovski-Siegel not to bound a symplectically convex domain. For an integer $p\geq 2$, consider the Seifert fibered manifold $M_p = M(-1; \frac{p-1}{p}, \frac{1}{p}, \frac{1}{p})$. This manifold has a surgery description similar to that in Figure \ref{small}, with the coefficient 0 replaced by $-1$, and three meridional surgery curves with coefficients $-\frac{p}{p-1}$, $-p$, and $-p$. It is not hard to see that $M_p$ is diffeomorphic to the boundary of the 4-manifold $Z_p$ (a rational homology ball) with handle description given in Figure \ref{NSmfds}. Moreover, $Z_2$ is the disk bundle over $\arr P^2$ with Euler number $-2$, which was considered by Nemirovski-Siegel \cite{NemSie}. 

\begin{proposition} For every integer $p\geq 2$, the manifold $Z_p$ admits a Stein structure $J_p$ and an embedding of $(Z_p, J_p)$ as a Stein domain in $\cee^2$. The symplectic structure corresponding to $J_p$ is compatible with an allowable Lefschetz fibration on $Z_p$ having fibers of genus 0, and the corresponding contact structure on $M_p$ is supported by a planar open book.
\end{proposition}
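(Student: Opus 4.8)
The plan is to derive all three conclusions from a single construction: an explicit positive allowable Lefschetz fibration (PALF) with planar fibers whose total space is $Z_p$. First I would write down an abstract Lefschetz fibration over $D^2$ whose regular fiber is a disk with holes (a planar, genus-0 surface) and whose monodromy is a product of right-handed Dehn twists along non-separating simple closed curves, so that positivity and allowability are built into the description. The key step is to verify, by Kirby calculus starting from the handle diagram of Figure \ref{NSmfds}, that the total space of this fibration is diffeomorphic to $Z_p$; this generalizes the $p=2$ picture of Nemirovski-Siegel. By the Loi-Piergallini and Akbulut-Ozbagci correspondence between Stein domains and PALFs, such a fibration endows $Z_p$ with a Stein structure $J_p$ whose associated symplectic (K\"ahler) form is compatible with the fibration. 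This establishes the Stein and genus-0 Lefschetz fibration assertions at once.

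The planar open book claim then follows formally. A PALF restricts on the boundary to an open book whose page is the regular fiber and whose monodromy is the total monodromy of the fibration, and this open book supports the contact structure $\xi$ induced by $J_p$ on $M_p = \partial Z_p$. Since the page is a disk with holes, $\xi$ is supported by a planar open book.

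It remains to embed $(Z_p, J_p)$ in $\cee^2$ as a Stein domain, which I would deduce from Gompf's Theorem \ref{gompfthm}. Two inputs are needed: a smooth embedding $\psi\colon Z_p \to \cee^2$, and a homotopy of $\psi^* J_{\mathrm{std}}$ to $J_p$ through almost-complex structures. For the embedding I would show by handle calculus that $Z_p$, a rational homology ball built from $B^4$ with handles of index at most $2$, embeds smoothly in $\arr^4$ (as Nemirovski-Siegel arranged for $Z_2$, and using the general embedding principle recalled after Conjecture \ref{kollarconj}). Because $Z_p$ is only a \emph{rational} homology ball, the clean argument following Theorem \ref{gompfthm} for integer homology balls does not apply, so the homotopy class must be checked directly: over the $2$-skeleton almost-complex structures are classified by $c_1$, and $\psi^* J_{\mathrm{std}}$ has vanishing $c_1$, so I would arrange the Legendrian attaching data so that $c_1(J_p) = 0$, controlling the possible $2$-torsion in $H^2(Z_p;\zee)$ by the rotation numbers of the attaching circles. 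After this, the only further obstruction sits over the top handle and vanishes, and Theorem \ref{gompfthm} promotes $\psi$ to the desired Stein embedding.

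The principal difficulty, I expect, lies in the explicit planar monodromy and its Kirby-calculus identification with $Z_p$ for all $p$, together with the homotopy-of-$J$ bookkeeping needed for Gompf's theorem --- in particular confirming that no obstruction to matching $\psi^* J_{\mathrm{std}}$ with $J_p$ arises from the torsion in $H^2(Z_p;\zee)$.
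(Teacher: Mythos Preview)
Your approach inverts the paper's logic, and this inversion creates a genuine gap. You propose to fix a single Stein structure $J_p$ coming from an explicit planar PALF, and then match $\psi^*J_{std}$ to that particular $J_p$. But $H^2(Z_p;\zee)\cong\zee/p\zee$, so there are $p$ homotopy classes of almost-complex structures, and when $p$ is even the map $\TT\mapsto c_1(\TT)$ is not injective: two distinct classes have $c_1=0$. Arranging $c_1(J_p)=0$ via rotation numbers therefore does not pin down the homotopy class, and you have no mechanism to ensure that your PALF-induced $J_p$ lands in the same class as $\psi^*J_{std}$. Your proposal flags this as a difficulty but offers no resolution.

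The paper sidesteps this entirely by reversing the order. It first shows that \emph{every} homotopy class of almost-complex structure on $Z_p$ is represented by a Stein structure $J_{p,k}$ (obtained by the $p$ different stabilization patterns of the Legendrian attaching circle), so whichever class $\psi^*J_{std}$ falls into, Gompf's theorem applies and one simply declares that $J_{p,k}$ to be $J_p$. Planarity and the genus-0 Lefschetz fibration are then obtained for this $J_p$ by a completely separate argument: the Lisca--Mati\'c transversality criterion shows $M_p$ admits no transverse contact structure, hence by Lisca--Stipsicz $M_p$ is an $L$-space and \emph{every} tight contact structure on it is planar; Wendl's theorem then upgrades the planar open book to a genus-0 allowable Lefschetz fibration on the minimal filling $Z_p$. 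No explicit PALF is ever written down.

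Your direct PALF construction, if it can be carried out for all $p$, would be more hands-on and would give explicit monodromy data that the paper does not provide. But to make it into a proof of the full statement you would either need to produce planar PALFs realizing \emph{all} $p$ homotopy classes (so that one of them matches $\psi^*J_{std}$), or else determine precisely which \spinc structure the smooth embedding induces and tailor your PALF to it---neither of which your proposal accomplishes.
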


\begin{figure}[h!]
\begin{center}
 \includegraphics[width=13cm]{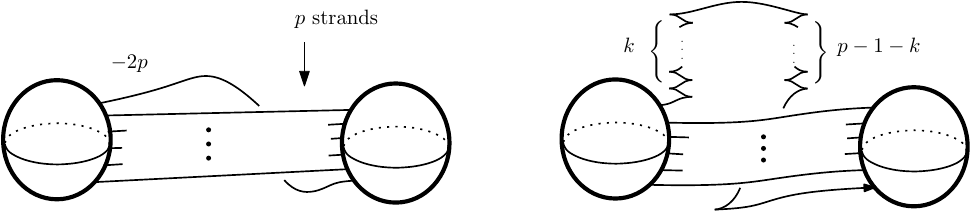}
 \caption{On the left is the 4-manifold $Z_p$, and on the right is the Stein domain $(Z_p, J_{p,k})$.}
  \label{NSmfds}
\end{center}
\end{figure} 

Since the contact structure $\xi_p$ induced by $J_p$ is planar, our obstruction gives no information on whether $Z_p$ can be rationally convex (in fact, as noted below, $HF^+_{red}(-M_p) = 0$). However, for $p = 2$, Nemirovski-Siegel showed that $(M_2, \xi_2)$ is {\it not} the boundary of a rationally convex domain in $\cee^2$, in particular in the terminology of \cite{CE}, $(Z_2, J_2)$ is $i$-convex but not symplectically convex. 

The above implies that in general the condition that a Stein domain in $\cee^2$ admit a genus 0 Lefschetz fibration compatible with its symplectic structure is not sufficient to ensure that the Stein domain is isotopic to a symplectically convex domain. Hence the symplectic convexity of the examples $(W_n, J_n')$ from the previous subsection cannot be decided by a general argument based on planarity.

\begin{proof}
First note that for any $p$, the manifold $M_p$ admits a smooth embedding in $\arr^4$: this was observed by Casson and Harer (part (3) of the main theorem in \cite{CH}). In fact, $Z_p$ also embeds in $\arr^4$. To see this, it is convenient to modify the diagram on the left of Figure \ref{NSmfds} by dragging the undercrossing strand around one attaching ball of the 1-handle, causing it to pass over the remaining strands. This requires the framing coefficient to change from $-2p$ to $0$. Now add a 2-handle along a trivial circle passing over the 1-handle with framing 0. Then we can cancel the 1-handle of $Z_p$ with the new handle, leaving behind a 2-handle attached along a 0-framed unknot. Adding a 3-handle then yields the 4-ball $B^4\subset\arr^4$. So $Z_p$ embeds in $\arr^4$ smoothly.

We wish to apply Theorem \ref{gompfthm} above. For this we observe that almost-complex structures on $Z_p$ are classified up to homotopy by their induced \spinc structure, and the set of \spinc structures is in bijection with $H^2(Z_p; \zee) \cong \zee/p\zee$. We claim that all these homotopy classes of almost-complex structure are realized by Stein structures derived from Figure \ref{NSmfds}. To see this, observe that the $-2p$-framed circle $K$ in $Z_p$ can be realized as a Legendrian knot with Thurston-Bennequin number $-p$ and rotation number $1$. Therefore after $p-1$ stabilizations we obtain a Legendrian representative of $K$ having Thurston-Bennequin number $-2p +1$. Since this is one greater than the smooth framing coefficient, the (unique) Stein structure on the 1-handle extends across the 2-handle \cite{Eliashberg:stein,Gompf}. Now, there are $p$ choices in how to perform the stabilizations: for each $k\in\{0,\ldots, p-1\}$ we can make $k$ negative and $p-1-k$ positive stabilizations as shown on the right of Figure \ref{NSmfds}. It is straightforward to check, using the methods of \cite{Gompf} or \cite[Section 11.3]{GS} (see particularly Example 11.3.12 of the latter), that the Stein structures $J_{p,k}$ arising from different choices of $k$ are not homotopic: indeed, they induce non-homotopic contact structures on the boundary $M_p$. Thus as $k$ varies, the Stein structures $J_{p,k}$ give representatives of each homotopy class of almost-complex structure on $Z_p$. It follows that if $\varphi: Z_p\to\cee^2$ is the embedding constructed above, the induced complex structure $\varphi^*(J_{std})$ is homotopic to some Stein structure $J_{p,k}$, and by Gompf's result $\varphi$ is isotopic to a Stein embedding of this $(W_p, J_{p,k})$. Below we simply write $J_p$ for this distinguished Stein structure.

Recall that a rational homology sphere $Y$ is an $L$-{\it space} if $HF^+_{red}(Y) = 0$, and $Y$ is an $L$-space if and only if $-Y$ is. According to work of Lisca-Stipsicz \cite[Theorem 1.1]{LS:hf3}, a small Seifert manifold such as $M_p$ is an $L$-space exactly when either $M_p$ or $-M_p$ fails to admit a contact structure transverse to the Seifert structure. Furthermore, a criterion of Lisca-Mati\'c \cite{LMfoliations} shows that the Seifert manifold $M(-1; r_1, r_2, r_3)$ admits a transverse contact structure if and only if there exist relatively prime integers $a$ and $m$, with $0<a<m$, such that
\[
mr_1 < a < m(1-r_2) \quad \mbox{and} \quad mr_3 < 1,
\]
where we have arranged $r_1\geq r_2\geq r_3$. It is easy to see that this condition does not hold for $M(-1; \frac{p-1}{p}, \frac{1}{p}, \frac{1}{p})$, so that the latter is an $L$-space. It now follows from \cite[Corollary 1.7]{LS:hf3} that {\it every} contact structure on $M_p$ is planar, in particular the the contact structure $\xi_p$ induced by $(Z_p, J_p)$. In \cite[Theorem~$1$]{Wendl:planar}, Wendl shows that a planar open book can always be extended to an allowable Lefschetz fibration over any minimal symplectic filling. Hence, $Z_p$ admits an allowable Lefschetz fibration having fibers of genus 0. 
\end{proof}

\begin{remark} Clearly the induced complex structure $\varphi^*(J_{std})$ has vanishing first Chern class. When $p$ is odd, the homotopy class of $J_{p,k}$ is uniquely determined by the Chern class, and in particular the  Stein structure arising when $k = 0$ is the only one with trivial Chern class (see \cite[Proposition 2.3]{Gompf}). When $p$ is even there is a second such Stein structure, namely the one with $k = \frac{p}{2}$.
\end{remark}

One can explicitly construct planar open books for (all) contact structures on $M_p$, using the fact that any such contact structure is given by a contact surgery diagram of the form in \cite[Figure 2]{LS:hf3}. Such techniques are used, for example, in \cite{Schoenenberger05,PVHM}, and indeed give the proof of \cite[Corollary 1.7]{LS:hf3} cited above.

Finally, we point out a generalization of the manifolds $M_p$ studied above. In \cite[Theorem~$1.1$]{IM}, Issa-McCoy found a two parameter family of rational homology spheres 
\[
M_{p,\ell}=M(-\ell; \frac{1}{p}, \frac{p-1}{p}, \frac{1}{p}, \cdots, \frac{p-1}{p}, \frac{1}{p}),
\]
 having $2\ell+1$ singular fibers, and proved that for every $p\geq 2, \ell\geq 1$, $M_{p,\ell}$ embeds smoothly in $\arr^4$. In this notation, $M_p$ above is $M_{p,1}$. 

With the single exception of $p =2$, $\ell =1$, it remains an open question whether $M_{p,\ell}$ embeds as a hypersurface of contact type in $\arr^4$.


\bibliography{references}
\bibliographystyle{amsplain}

\end{document}